\newtheorem{theorem}{Theorem}
\newtheorem{lemma}{Lemma}
\newtheorem{proposition}[lemma]{Proposition}
\newtheorem{corollary}[lemma]{Corollary}
\newtheorem{remark}[lemma]{Remark}
\numberwithin{lemma}{section}
\renewcommand{\H}{\mathcal{H}}
\numberwithin{equation}{section}
\newcommand{\R}{{\mathbb R}}
\renewcommand{\H}{{\mathcal H }}
\newcommand\Tau{\mathcal{T}}
\newcommand{\ZZ}{{\mathcal Z}}
\newcommand{\hh}{h}
\newcommand{\w}{{\ w}}
\renewcommand{\w}{{\ w}}
\newcommand{\ww}{{\omega}}
\newcommand{\tu}{\tilde{u}}
\newcommand{\tv}{\tilde{v}}
\begin{document}

\title{Almost global well-posedness for quasilinear  strongly coupled wave-Klein-Gordon systems in two space dimensions}
\author{Mihaela Ifrim}
\address{Department of Mathematics, University of Wisconsin, Madison}
\email{ifrim@math.wisc.edu}

\author{Annalaura Stingo}
\address{Department of Mathematics, University of California, Davis}
\email{astingo@ucdavis.edu}

\begin{abstract}
We prove almost global well-posedness for quasilinear strongly coupled wave-Klein-Gordon systems with small and localized data in two space dimensions.  We assume only mild decay on the data at infinity as well as minimal regularity. We  systematically investigate  all the possible quadratic null form type quasilinear strong coupling nonlinearities, and provide a new, robust approach for the proof. In a  second paper we will complete the present results to full global well-posedness. 
\end{abstract}

\date{10.28.2019}
\maketitle

\section{Introduction}

The problem we will address here is the  Cauchy problem  for the following quasilinear strongly coupled wave-Klein-Gordon system 
\begin{equation} \label{WKG}
\left\{
\begin{aligned}
&(\partial^2_t  - \Delta_x) u(t,x) = \mathbf{N_1}(v, \partial v) + \mathbf{N_2}(u, \partial v) \,, \\
&(\partial^2_t  - \Delta_x  + 1)v(t,x) = \mathbf{N_1}(v, \partial u) + \mathbf{N_2}(u, \partial u)\,,
\end{aligned} 
\right. \quad (t,x)\in \left[0,+\infty \right) \times \mathbb{R}^2,
\end{equation}
with initial conditions
\begin{equation}\label{data}
\left\{
\begin{aligned}
& (u,v)(0,x) = (u_0(x),v_0(x))\,,\\
& (\partial_t u,\partial_t v)(0,x) =  (u_1(x), v_1(x)).
\end{aligned}
\right.
\end{equation}

The nonlinearities $\mathbf{N_1} (\cdot, \cdot)$ and $\mathbf{N_2} (\cdot, \cdot)$ represent the wave-Klein-Gordon coupling via classical quadratic null
structures. Precisely, $\mathbf{N_1}(\cdot, \cdot)$  and $\mathbf{N_2}(\cdot, \cdot)$ will be  linear
combinations of the classical  quadratic null forms
\begin{equation}
\label{null forms}
\left\{
\begin{aligned}
&Q_{ij}(\phi, \psi) =\partial_i  \phi \partial_j \psi -\partial_i\psi  \partial_j \phi, \\
&Q_{0i}(\phi, \psi) =\partial_t  \phi \partial_i \psi -\partial_t\psi  \partial_i \phi, \\
&Q_{0}(\phi, \psi) =\partial_t \phi   \partial_t \psi -\nabla_x \psi \cdot \nabla_x   \phi.
\end{aligned}
\right.
\end{equation}

The main result we present in this paper asserts the almost global
existence of solutions to the above system, when initial data are
assumed to be small and localized. This is the first of a two paper
sequence, where the aim of the second paper is to improve the
almost-global well-posedness result to a global well-posedness
result. The reason we structure this work in two papers is that
they address very different aspects of the problem using essentially
disjoint ideas and methods.

\medskip

Compared with prior related  works, our novel contributions here include the following
\begin{itemize}
\item Our quasilinear structure provides a strong coupling between the
  wave and the Klein-Gordon equation, unlike any other prior works in
  two space dimensions (except for the second author's work
  \cite{stingo2}, that only applies to the $Q_{0}$ type
  nonlinearities).
\item We make no assumptions on the support of the initial
  data. Furthermore, we make very mild decay assumptions on the
  initial data at infinity. In particular, we use only two Klainerman
  vector fields in the analysis, which is optimal and below anything
  that has been done before.
\item Rather than using arbitrarily high regularity, here we work with
  very limited regularity initial data, e.g. our two vector fields bound
is simply in the energy space.
\item In terms of methods, our work is based on a combination
  of energy estimates localized to dyadic space-time regions, and
  pointwise interpolation type estimates within the same regions. This is  akin
  to ideas previously used by Metcalfe-Tataru-Tohaneanu \cite{mtt} in
  a linear setting, and is also related to Alinhac's ghost weight method
  \cite{alih}.
\end{itemize}

We remark that our methods allow for a larger array of weak quasilinear null form interactions
in the equations, as well as non-null $v$-$v$ interactions.  We focus our 
exposition to the case of strong interactions above simply because this case  is more difficult
 and has not been considered before except for \cite{stingo2}.
 \subsection{Motivation and a brief history} 

 The model we study here is physically motivated by problems arising
 in general relativity, where many similarly structured problems
 arise.  Most of the results known so far concern the
 wave-Klein-Gordon systems in the $3$ dimensional setting, but there
 is also a fair amount of work done in the $2$ dimensional case, where systems akin to  \eqref{WKG}, but with different types of nonlinearities, have been
 considered.  Since our result is set in the $2$-dimensional setting,
 we will focus mostly in explaining what has been done in this
 direction and how it relates to our result.

Regardless of the spatial dimension considered, one
 always has to understand and deal with resonant interactions.
In the wave-wave to wave bilinear interactions,  resonance 
occurs for parallel waves. This is where the null condition plays a major role,
as it cancels these interactions. In all other wave-Klein Gordon bilinear interactions
there is no true resonance; however, there is a near resonance for almost parallel waves
in the high frequency limit, which becomes stronger in a quasilinear setting. 
For this reason, the null condition is still important in the wave-Klein Gordon 
quasilinear interactions, perhaps less so in the semilinear ones.

The main difference between the two dimensional setting and higher dimensions
is due to the weaker dispersive decay in low dimension.
 In particular, this is the reason why our analysis and methods are much more 
involved than in the work done in the three dimensional case, and also why more is required 
in terms of the structure of the nonlinearity in two dimensions.

\bigskip

In what follows we review some of the work which is relevant to our
result, and which has been done for the wave-Klein-Gordon system. Some
relatively recent work in the three dimensional setting that relates
with this model started with the work of Georgiev \cite{georgiev}, and
Katayama \cite{katayama}, who proved the global existence of small
amplitude solutions to coupled systems of wave and Klein-Gordon
equations under certain suitable conditions on the
nonlinearity. These include the \textit{null condition} of Klainerman
\cite{sergiu} on self-interactions between wave components. Katayama's conditions
 imposed on the nonlinearities are weaker than the
\textit{strong null condition} used by Georgiev. Relevant to our work is also Delort's work on Klein-Gordon systems \cite{delort, delort1, delort2, delort3, delort4}. More recently, a related
problem was also studied by LeFloch, Ma \cite{leflochma2} and Wang \cite{wang} as a model for the full
Einstein-Klein-Gordon system.
There the authors prove global existence of solutions to
wave-Klein-Gordon systems with quasilinear quadratic nonlinearities
satisfying suitable conditions, when initial data are small, smooth
and compactly supported.  An idea used there is that of employing hyperbolic coordinates
in the forward light cone; this was first introduced in the wave context in the work of Tataru~\cite{tataru-hyp},
and later reintroduced by LeFloch-Ma in \cite{leflochma2} under the name \textit{hyperboloidal
  foliation method}.  In \cite{ionescupausader} Ionescu and Pausader also prove
global regularity and modified scattering in the case of small smooth
initial data that decay at suitable rates at infinity, but not
necessarily compactly supported.  We also cite a work by Dong-Wyatt \cite{DW20} in which global well-posedness is proved for a quadratic semilinear wave-Klein-Gordon interaction in which there are no derivatives on the wave component of the solution.
Global stability for the full Einstein-Klein-Gordon
system has been then proved by LeFloch-Ma \cite{leflochma1} in the case of small smooth perturbations that agree
with a Schwarzschild solution outside a compact set (see also Wang
\cite{Q.Wang}), and by Ionescu-Pausader \cite{IP2} in the case of unrestricted data.

Most of the results we know concerning global existence of small amplitude
solutions in lower space dimension are due to Ma. His results apply
to compactly supported Cauchy data (a restriction that our current
result avoids) so that the hyperboloidal foliation
method can be used, see \cite{ma:2D_tools}. In particular, in
\cite{ma:2D_quasilinear} Ma combines this method with a normal form
argument to treat some quasilinear quadratic nonlinearities, while
in \cite{ma} he treats wave-Klein-Gordon coupled system with more general quasilinear
terms with null structure, but only in the case of a weakly coupled system. We also cite \cite{ma:2D_semilinear, Ma2020, DuanMa20}, in which Ma studies the case of some semilinear
quadratic interactions. In \cite{ma:1D_semilinear} the restriction on
the support of initial data is bypassed for the one-dimensional
problem, but there only a semilinear cubic model
wave-Klein-Gordon system is discussed. An example of quadratic semilinear wave-Klein-Gordon system is also studied by Dong-Wyatt in \cite{DW20_2d}.
The only global well-posedness result known at present for strongly-coupled quadratic and quasilinear wave-Klein-Gordon systems is an example studied by the second author in \cite{stingo2}, where a $Q_0$-type interaction is considered.

\subsection{The linear system and energy functionals}
The system \eqref{WKG} is a nonlinear version of the linear diagonal system
\begin{equation} \label{WKG-lin}
\left\{
\begin{aligned}
&(\partial^2_t  - \Delta_x) u(t,x) = 0 \,,  \\
&(\partial^2_t  - \Delta_x  + 1)v(t,x) = 0\,,
\end{aligned} 
\right. \quad (t,x)\in \left(0,+\infty \right) \times \mathbb{R}^2.
\end{equation}
The linear system  \eqref{WKG-lin} has an associated conserved energy
 given by
\begin{equation}
\label{energy}
E(t; u,v)=\int _{\mathbb{R}^2}  u_t ^2 +u_x^2 +  v_t ^2 +v_x^2 + v^2\, dx.
\end{equation}
This is no longer a conserved quantity for the nonlinear system
\eqref{WKG}, but we will still use it to define the associated energy
space and also all our main function spaces. The system
\eqref{WKG-lin} is a well-posed linear evolution in the space $\mathcal{H}^0$
with norm
\[
\Vert (u[t],v[t]) \Vert^2_{\mathcal{H}^0}:=  
\Vert u\Vert^2_{\dot{H}^1}+ \Vert u_t\Vert^2_{L^2} + \Vert v \Vert^2_{{H}^1}+ \Vert v_t\Vert^2_{L^2},
\]
where we use the following notation for the
Cauchy data in \eqref{WKG} at time $t$:
\[
\left(u[t], v[t]\right):=(u(t), u_t(t), v(t), v_t(t)).
\]

The higher order energy  spaces 
for the system \eqref{WKG-lin} are the spaces $\mathcal{H}^n$ endowed with the norm
\[
\Vert (u_0, u_1, v_0, v_1)\Vert^2_{\mathcal{H}^n}:= 
\sum_{\vert \alpha \vert \leq n} \Vert \partial_x^{\alpha}(u_0, u_1, v_0, v_1)\Vert^2_{\mathcal{H}^0},
\]
where $n\geq 1$. We will also use the  energy spaces for the nonlinear system \eqref{WKG}.

Above and in the sequel we use notation conventions as follows:
$\partial$ denotes time and spatial derivatives, $\partial_x $
denotes only the spatial derivatives, $\nabla$ represents the
space-time gradient, and $\nabla_x$ represents the spatial
gradient only. Also, LHS (resp. RHS) will be an abbreviation for ‘‘\emph{left hand side}'' (resp. ‘‘\emph{right hand side}'').

\subsection{Scaling, criticality and local well-posedness}
One important notion that will guide our efforts in proving optimal
results in terms of regularity is given by the scaling of the
problem. The nonlinear terms play a crucial role in the long time
dynamics of the solution and also influence the critical regularity
close to which we seek to prove our local and then global existence
results. To properly explain the relation between the nonlinearity and
the critical homogeneous Sobolev space, we connect the higher
dimensions with the notion of criticality by means of the scaling
symmetry which our system \eqref{WKG} possesses in the high frequency limit
\[
\left\{
\begin{aligned}
&u(t,x)\rightarrow \lambda^{-1} u(\lambda t, \lambda x )\\
&v(t,x)\rightarrow \lambda^{-1} v(\lambda t, \lambda x ).\\
\end{aligned}
\right.
\]
This, in particular, leads to the critical Sobolev space ${\mathcal
  H}^{s_c}$ with $s_c=d/2+1$. For our problem the critical Sobolev
exponent is $s_c = 2$. In particular, it is not too difficult to show
that in two dimensions \eqref{WKG} is locally well-posed in
$\mathcal{H}^n$ for $n \geq 4$ (or $\mathcal H^{3+\epsilon}$ if we do not restrict ourselves
to integers).

To describe the lifespan of the solutions we define the time dependent control norms
\begin{equation}
\label{norm A}
A:= \sum_{\vert \alpha \vert =1}\Vert \partial^{\alpha}u\Vert_{L^{\infty}} +\sum_{\vert \alpha \vert =1}\Vert \partial^{\alpha}v\Vert_{L^{\infty}},
\end{equation}
respectively 
\begin{equation}
\label{norm B}
B:= \sum_{\vert \alpha \vert = 2}\Vert \partial^{\alpha}u\Vert_{L^{\infty}} +\sum_{\vert \alpha \vert  =  2}\Vert \partial^{\alpha}v\Vert_{L^{\infty}}.
\end{equation}
Here $A$ is a scale invariant quantity which will be required to
remain small throughout in order to preserve the hyperbolicity of the
problem. Then we have the following local result:

\begin{theorem}\label{t:local}
a) The problem \eqref{WKG} is locally well-posed for initial data in  $\H^n$, $n \geq 4$, with the additional property 
that $A$ is small.

b) Uniform finite speed of propagation holds for as long as $A$ remains small.

c) The solutions can be continued for as long as $ \int B \, dt$ remains finite, and for each $k \geq 0$ we  have the following energy estimate:
\begin{equation}
\| (u,v)(t)\|_{\H^k} \lesssim e^{c \int_0^t B(s)\, ds}  \| (u,v)(0)\|_{\H^k} .
\end{equation}
\end{theorem}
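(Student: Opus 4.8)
The plan is to run the classical energy method for quasilinear hyperbolic systems; the null structure \eqref{null forms} plays no role at this level of regularity and is reserved for the long-time analysis. The first observation is that \eqref{WKG} is genuinely quasilinear: although each form in \eqref{null forms} is quadratic in first derivatives, in the products $\mathbf{N_2}(u,\partial v)$, $\mathbf{N_1}(v,\partial u)$, $\mathbf{N_2}(u,\partial u)$ the slot $\partial(\cdot)$ carries a second derivative, so expanding the system one finds that its principal part is $(\partial_t^2-\Delta)u$ and $(\partial_t^2-\Delta+1)v$ \emph{modified} by terms $a^{jk}(\partial u,\partial v)\,\partial_j\partial_k u$, $b^{jk}(\partial u,\partial v)\,\partial_j\partial_k v$ and their cross analogues, with coefficients $a^{jk},b^{jk}$ linear in $(\partial u,\partial v)$ and hence of size $O(A)$, plus genuinely lower-order terms quadratic in $(\partial u,\partial v)$. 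Smallness of $A$ is precisely what keeps this modified principal symbol (symmetric) hyperbolic and the natural energy — a perturbation of the sum over $|\alpha|\le n$ of \eqref{energy} applied to $\partial_x^\alpha(u,v)$ — coercive. The steps are then (i) the higher-order a priori energy estimate, (ii) construction of solutions by a quasilinear iteration, (iii) uniqueness and continuous dependence, (iv) finite speed of propagation, (v) the continuation criterion.

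For the a priori estimate, apply $\partial_x^\alpha$ with $|\alpha|\le k$ to \eqref{WKG}; then $(u_\alpha,v_\alpha):=\partial_x^\alpha(u,v)$ solves the frozen-coefficient version of the quasilinear system above together with commutator sources. Pairing the $u_\alpha$-equation with $\partial_t u_\alpha$, the $v_\alpha$-equation with $\partial_t v_\alpha$, integrating in $x$ and summing over $|\alpha|\le k$ gives a functional $E_k(t)$ which — $A$ being small — is uniformly equivalent to $\|(u,v)(t)\|_{\H^k}^2$. In $\frac{d}{dt}E_k$ the only dangerous contributions are the integration-by-parts terms in which a derivative falls on a coefficient, producing a factor $\partial a^{jk}\sim(\partial^2u,\partial^2v)$ bounded by $B$, and the commutators $[\partial_x^\alpha,a^{jk}\partial_j\partial_k]$ together with the terms from distributing $\partial_x^\alpha$ across the quadratic nonlinearities, which by the Moser-type product and commutator estimates are bounded by $(A+B)E_k$. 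Since $A$ is small and may be absorbed into the constant over any bounded time interval, this yields $\frac{d}{dt}E_k\lesssim B(t)E_k$, and Gronwall gives the asserted bound $\|(u,v)(t)\|_{\H^k}\lesssim e^{c\int_0^t B(s)\,ds}\|(u,v)(0)\|_{\H^k}$. This estimate is the crux of the whole theorem; the one point requiring care is the bookkeeping that separates the genuinely quasilinear terms, paid for by $B$, from those paid for by the scale-invariant $A$.

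For existence I would run the standard quasilinear scheme: mollify the data and build approximants $(u^{(m)},v^{(m)})$ by solving at each step the \emph{linear} wave--Klein--Gordon system obtained from \eqref{WKG} by freezing the quasilinear coefficients $a^{jk},b^{jk}$ at the previous iterate, so that no derivative is lost; since $A^{(m)}$ stays small these linear systems are well posed, the estimate of step (i) provides bounds uniform in $m$, and a contraction estimate in the weaker norm $\H^{k-1}$ (or $\H^0$) extracts a limit in $C([0,T];\H^k)$, with a Bona--Smith type argument upgrading this to strong time-continuity and continuous dependence on the data; alternatively one may invoke a general symmetric-hyperbolic quasilinear well-posedness theorem once the structure above is checked. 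Uniqueness follows from the $\H^0$ energy estimate for the difference of two solutions. Part (b) follows because smallness of $A$ confines the characteristic speeds of the modified principal symbol to an interval $[-(1+\delta),1+\delta]$ with $\delta=\delta(A)\to0$ as $A\to0$: running the energy identity on the time slices of a backward truncated cone of slope $1+\delta$ makes the lateral flux have the favorable sign, so the sliced energy is nonincreasing, which is exactly finite speed of propagation, with a cone slope that is uniform in the solution because the bound on $A$ is. Finally, for (c): on the maximal interval $[0,T^*)$, where $A$ is small by construction, the Gronwall bound keeps $\|(u,v)\|_{\H^k}$ bounded as soon as $\int_0^{T^*}B\,dt<\infty$ and — in the small-data regime, where that bound is itself small — also keeps $A$ small via Sobolev embedding since $k\ge4>s_c$, so the local theorem can be reapplied near $T^*$ to continue the solution past it, contradicting maximality; hence $\int B\,dt$ must diverge at any finite maximal time.
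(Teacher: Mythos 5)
Your proposal is correct and follows essentially the same route as the paper: a quasilinear-corrected energy (the paper realizes your ``perturbation of the sum of \eqref{energy}'' concretely as $E^{quasi}=E+\int B_1+B_2$, with cubic corrections matched to the null forms so that the cross-coupled second-derivative terms become a divergence plus a term paid for by $B$), an iteration with coefficients frozen at the previous iterate, an $\H^0$ difference estimate for uniqueness, a cone energy-flux argument for finite speed, and Gronwall on the differentiated system for continuation. The one structural fact your sketch relies on implicitly — that the off-diagonal coupling is symmetric, since the same forms $\mathbf{N_1}(v,\cdot),\mathbf{N_2}(u,\cdot)$ multiply $\partial^2 v$ in the $u$-equation and $\partial^2 u$ in the $v$-equation, so the paired integration by parts closes — is exactly what the paper's corrections $B_1,B_2$ encode.
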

We remark that this  also shows  the continuation of higher regularity  of the solution for as long as $\int B \,dt$ remains finite.

\subsection{The main result}
To  study the small data long time well-posedness problem 
for the nonlinear evolution \eqref{WKG} one needs to add
some decay assumptions for the initial data to the mix. 
At this point we can already state a preliminary version of 
our main theorem, which clarifies the type of initial data we 
are considering.

\begin{theorem}\label{t:main-easy}
Let $\hh \geq 8$. Assume that the initial data $(u[0],v[0])$ for \eqref{WKG}
satisfies
\begin{equation}\label{data-main}
\| (u[0],v[0])\|_{\mathcal{H}^{2\hh}} + \| x \partial_x (u[0],v[0])\|_{ \mathcal{H}^{\hh}} + \| x^2 \partial_x^2 (u[0],v[0])\|_{\mathcal{H}^0} \leq \epsilon \ll 1.
\end{equation}
Then the equation \eqref{WKG} is almost globally well-posed in the same space i.e., the solution exists up to time $T_{\epsilon}=e^{\frac{c}{\epsilon}}$, where $c$ is a small positive universal constant.

\end{theorem}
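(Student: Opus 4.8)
The plan is to set up a bootstrap/continuity argument built around a hierarchy of norms that simultaneously controls energy, vector-field energy, and pointwise decay, all localized to dyadic space-time regions, and to show that these norms stay at size $O(\epsilon)$ on a time interval of length $e^{c/\epsilon}$. Concretely, starting from the data assumption \eqref{data-main}, I would propagate: (i) top-order energies $\|(u,v)(t)\|_{\H^{2\hh}}$, which by Theorem~\ref{t:local}(c) grow at most like $e^{c\int_0^t B\,ds}$; (ii) intermediate-order energies with one Klainerman vector field applied, i.e. a quantity controlling $Z(u,v)$ in $\H^{\hh}$ where $Z$ ranges over the (only two) available vector fields coming from $x\partial_x$-type data and the Lorentz/scaling structure compatible with the Klein–Gordon mass; and (iii) the full-vector-field quantity bounding $Z^2(u,v)$ in the base energy space $\H^0$. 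The key point is that the $A$ and $B$ control norms must be recovered from these energy norms: using Klainerman–Sobolev type inequalities adapted to two dimensions (and to the wave/Klein-Gordon dichotomy), two vector fields in the energy space should give enough pointwise decay — $A\lesssim \epsilon \langle t\rangle^{-1/2}$ for the wave part and better for the Klein-Gordon part, $B\lesssim \epsilon\langle t\rangle^{-1/2+}$ — so that $\int_0^{T_\epsilon}B\,ds\lesssim \epsilon\, T_\epsilon^{1/2+}$, which is \emph{not} directly small; hence the need to go beyond a naive Gronwall argument.

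The next step, and the heart of the matter, is the nonlinear interaction analysis via the null structure. Because all nonlinearities are linear combinations of $Q_{ij}, Q_{0i}, Q_0$, the worst (parallel-wave) resonant interactions are cancelled; what remains must be estimated using the improved decay that null forms provide (gaining a factor of $\langle t-r\rangle/\langle t\rangle$, i.e., a $\langle t\rangle^{-1}$ type gain in the wave-wave-to-wave sector) together with the fact that the Klein-Gordon factor, being massive, carries $\langle t\rangle^{-1}$ decay in $2$D rather than $\langle t\rangle^{-1/2}$. The plan is to localize to dyadic regions $\{|x|\sim 2^j, t\sim 2^m\}$, run energy estimates weighted à la Alinhac's ghost weight (an exponentially-in-$q=r-t$ weight) to capture the null-form gain as a genuine spacetime $L^2$ bound on the "good" derivatives, and then feed these back via pointwise interpolation inequalities within each dyadic block — this is the Metcalfe–Tataru–Tohaneanu \cite{mtt} mechanism referenced in the introduction. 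Combined, these should upgrade the energy growth estimate: instead of $\dot E \lesssim B\, E$ with $B$ only $O(\epsilon t^{-1/2+})$, one gets $\dot E \lesssim \epsilon \langle t\rangle^{-1} E$ (plus harmless lower-order terms), whose integral over $[0,T_\epsilon]$ is $c\epsilon \log T_\epsilon = c'\cdot \frac{c}{\epsilon}\cdot \epsilon = O(1)$, closing the bootstrap precisely at the almost-global time $T_\epsilon = e^{c/\epsilon}$.

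I would carry out the argument in this order: first fix the bootstrap assumptions (energy norms $\le C\epsilon$, control norms $\le C\epsilon\langle t\rangle^{-1/2}$ and their refinements) on a maximal interval $[0,T^*)$; second establish the linear/Klainerman–Sobolev decay estimates that convert energies into $A,B$ control, verifying that two vector fields suffice in $2$D for the $\H^0$-level bounds required; third perform the weighted energy estimates for $u$, for $v$, and for their vector-field derivatives, exploiting the null forms to extract the $\langle t\rangle^{-1}$ improvement and handling the quasilinear top-order terms by the standard symmetrization/integration-by-parts trick so that the genuinely dangerous contributions are the lower-order null-form bilinear terms; fourth run the pointwise interpolation within dyadic regions to close the decay bounds at the sharp rate; and finally combine to improve all bootstrap constants from $C$ to $C/2$ on $[0,T^*)$, forcing $T^*\ge T_\epsilon$ by the continuation criterion in Theorem~\ref{t:local}(c). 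The main obstacle I expect is step three in the wave equation for $u$: the source terms $\mathbf{N_1}(v,\partial v)+\mathbf{N_2}(u,\partial v)$ are quadratic in quantities with only $\langle t\rangle^{-1/2}$ decay for the wave factor, and in $2$D the integrable-in-time improvement must come \emph{entirely} from the combination of the null structure and the ghost-weight spacetime bound — there is essentially no slack, so the bilinear estimates (especially those mixing a "bad" wave derivative against a null-form-favored direction, and those where vector fields hit the quasilinear coefficients) have to be done with the dyadic localization rather than crude Sobolev embedding, which is exactly why the paper's localized-energy method is needed.
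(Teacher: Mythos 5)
Your proposal follows essentially the same route as the paper: a bootstrap on pointwise decay bounds closed by (a) quasilinear ghost-weight energy estimates localized to the dyadic regions $C_{TS}^{\pm}$, exploiting the null structure to obtain spacetime $L^2$ control of tangential (good) derivatives, and (b) Klainerman--Sobolev/interpolation bounds in the same regions using only two vector fields, with the energy growing like $t^{\tilde C \epsilon}$ and the bootstrap closing precisely for $T \ll e^{c/\epsilon}$. The only cosmetic difference is that the paper cannot establish your fixed-time inequality $\dot E \lesssim \epsilon t^{-1} E$ and instead proves the equivalent dyadic-interval bound $\sup_{[T,2T]} E \leq (1+C\epsilon)E(T)$ via spacetime integrals, which is exactly what your ghost-weight step delivers anyway.
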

Here  we made an effort to limit the decay assumptions, i.e. use only $x^2$ type decay, but we did not attempt  to fully optimize the choice of $\hh$.

\subsection{Vector fields and the main result revisited}

To provide a better form of the above theorem, one should also describe the
global bounds and decay properties of the solutions.  This analysis is closely related to the family of Killing vector fields associated to
our problem, i.e. of vector fields that commute with the linear evolution
\eqref{WKG-lin}.  We will also add to the list below the scaling
vector field $S$, which is not Killing but plays an important role in the proof of our
main result in Theorem~\ref{t:main} below. The commuting vector fields together
with the scaling vector field are as follows:
\begin{align}
 \label{deriv}  \partial_t, \ \partial_1,&\ \partial_2,\\
 \label{rot} \Omega_{ij}=x_j\partial_i&-x_i\partial_j,\\
 \label{Lor}\Omega_{0i}=t\partial_i&+x_i\partial_t\\
 \label{scal} \mathscr{S}=t\partial_t&+r\partial_r,
\end{align}
where $1\leq i \neq j \leq 2$, $r=\vert x\vert$ and
$\partial_r=\frac{x}{r} \cdot \nabla_x$. The expressions in
\eqref{deriv} correspond to translations in the coordinate directions;
\eqref{rot} correspond to rotations in the space variable $x$;
\eqref{rot} and \eqref{Lor} correspond to the Lorentz transformations;
finally, \eqref{scal} corresponds to dilations.  To obtain symmetrical
notations we will sometimes write $t=x_0$ and
$\partial_t=\partial_0$. Note that in \eqref{rot} we can restrict to
$1\leq i <j\leq 2$ by skew-symmetry. Thus we have a total of $8$
different vector fields.

We refer to all the vector fields \eqref{rot} and \eqref{Lor} as the
\emph{ Klainerman vector fields} and we will denote all of them by $Z$
\begin{equation}
\label{Z}
Z:= \left\{ \Omega_{ij}, \Omega_{0i} \right\}.
\end{equation}
We denote the full set of vector fields associated to the symmetries of the linear problem
as
\begin{equation}
\label{ZZ}
\ZZ: =\left\{\partial_0,\partial_1,\partial_2, \Omega_{ij}, \Omega_{0i} \right\}.
\end{equation}
For a multiindex $\gamma= (\alpha, \beta)$  we denote
\[
\ZZ^\gamma = \partial^\alpha Z^\beta,
\]
and define the size of such a multi-index by
\[
|\gamma| = |\alpha| + \hh |\beta|,
\]
where $\hh$ is a positive integer that will be specified later and describes the balance between Klainerman vector fields and regular derivatives in our analysis.
We use these vector fields in order to define the  higher order  counterparts of the energy functional  \eqref{energy}:

\begin{itemize}
\item[a)] the energy $E^n(t, u,v)$ measures the regularity in the  function space $\mathcal{H}^n$ of the solutions,
\begin{equation}
\label{high energy}
E^n(t, u,v):= \sum_{\vert \alpha \vert \leq n} E\left(  t; \partial^{\alpha} u, \partial^{\alpha} v\right).
\end{equation}
\item[b)] the energy $E^{[n]}(t, u,v)$  keeps track of $Z$ vector fields applied to the solution in addition to regular derivatives,
\begin{equation}
\label{high vf energy}
E^{[n]}(t, u,v):= \sum_{|\gamma| \leq n} E\left(  t; \ZZ^\gamma u, \ZZ^\gamma v\right).
\end{equation}
\end{itemize}

The energy functional \eqref{energy} represents the natural energy of
the Klein-Gordon equation together with the energy of the wave
equation. The functional $E^n$ is the energy associated to the
differentiated variables and, as usual, helps us control the $L^2$ norm
of these variables, equivalently saying it represents the higher
order energy that controls the $H^{n+1}$ Sobolev norms of the solutions
for $n\geq 3$. The last energy functional $E^{[n]}$ represents the
energy associated to the system \eqref{WKG} to which we have also
applied Klainerman vector fields. Using these energies, we are now
able to state a more precise version of our main theorem:

\begin{theorem}\label{t:main}
Assume that the initial data $(u[0],v[0])$ for \eqref{WKG}
satisfies
\begin{equation}\label{small-data}
\| (u[0],v[0])\|_{\mathcal H^{2\hh}} + \| x \partial_x (u[0],v[0])\|_{\mathcal H^{\hh}}
+ \| x^2 \partial_x^2 (u[0],v[0])\|_{\mathcal H^0} \leq \epsilon \ll 1.
\end{equation}
Then the equation \eqref{WKG} is almost globally well-posed in $\mathcal H^{2\hh}$,
with $L^2$ bounds as follows:
\begin{equation}
E^{[2\hh]}(t,u,v) \lesssim \epsilon^2,
\end{equation}
and pointwise bounds
\begin{equation}
|  \partial^j v | \lesssim \epsilon \langle t+r \rangle^{-1} , \qquad j = \overline{0,3},
\end{equation}
\begin{equation}
|  \partial^j u | \lesssim \epsilon \langle t +r \rangle^{-\frac12}  \langle t-r \rangle^{-\frac12}, \qquad j = \overline{1,3},
\end{equation}
\begin{equation}
|  \partial^j Z  u | \lesssim \epsilon, \qquad j = \overline{0,2}.
\end{equation}
\end{theorem}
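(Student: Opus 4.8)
The plan is to close a bootstrap argument for the package of $L^2$ and pointwise bounds stated in Theorem~\ref{t:main}, running on the time interval $[0,T_\epsilon]$ with $T_\epsilon = e^{c/\epsilon}$. First I would set up the bootstrap hypothesis: assume that on $[0,T]$ one has $E^{[2\hh]}(t,u,v) \le C_0 \epsilon^2$ together with the pointwise decay bounds (with a constant $C_1 \gg C_0$), and aim to recover the same bounds with the constants halved, provided $\epsilon$ is small enough and $T \le T_\epsilon$. The logarithmic lifespan will come precisely from the fact that the energy growth is driven by a bound of the form $\frac{d}{dt} E^{[2\hh]} \lesssim B(t)\, E^{[2\hh]}$ with $\int_0^T B \lesssim \epsilon \log(2+T)$, so Gr\"onwall gives $E^{[2\hh]}(t) \lesssim \epsilon^2 (1+t)^{C\epsilon}$, which stays $\lesssim \epsilon^2$ up to $T_\epsilon$. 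This uses the continuation criterion from Theorem~\ref{t:local}(c), so the solution genuinely persists on the whole interval.

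The core of the argument has two interlocking pieces. \emph{Energy estimates:} commute the system \eqref{WKG} with $\ZZ^\alpha$, $|\alpha| \le 2\hh$; because the $\ZZ$ are (almost) Killing for the linear flow and the null forms \eqref{null forms} commute well with them, the commutator nonlinearities are again sums of null-form-type quadratic expressions in $\ZZ$-differentiated variables, plus quasilinear top-order terms $\mathbf{N_2}(u, \partial^2(\cdot))$. The quasilinear terms are handled by the localized-to-dyadic-region energy method alluded to in the introduction (the Metcalfe--Tataru--Tohaneanu/Alinhac ghost-weight circle of ideas): on each space-time dyadic region one integrates by parts using a ghost weight adapted to the null structure, paying the quasilinear coefficient $A$ (kept small by bootstrap) and the pointwise size of $\partial u$. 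The semilinear null-form contributions are estimated by putting the lowest-order factor in $L^\infty$ using the pointwise bounds and the remaining factors in $L^2$; the null structure is what makes the almost-parallel-wave near-resonance contributions integrable in time. One must carefully distribute the $2\hh$ derivatives so that at most one factor carries more than $\hh$ vector fields — this is the role of the weighting $|\gamma| = |\alpha| + \hh|\beta|$. \emph{Pointwise/decay estimates:} from $E^{[2\hh]} \lesssim \epsilon^2$ and the weighted data assumption \eqref{small-data} (which, via the vector fields, controls $\|x\partial_x(\cdot)\|$, i.e. some $\Omega_{0i}$-type weighted norms), recover the pointwise bounds by Klainerman--Sobolev type inequalities: for $v$ one gets the Klein-Gordon decay $\langle t+r\rangle^{-1}$, for $\partial u$ the wave decay $\langle t+r\rangle^{-1/2}\langle t-r\rangle^{-1/2}$, and for $Zu$ (only two vector fields available) one gets only boundedness, which is exactly why the theorem claims $|\partial^j Z u| \lesssim \epsilon$ with no decay. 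These pointwise bounds must be proved with room to spare ($C_1/2$) — the decay for $u$ near the light cone is the delicate point and is where the ghost-weight-generated spacetime $L^2$ bounds on good derivatives of $u$ feed back in.

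The main obstacle I expect is the quasilinear top-order interaction $\mathbf{N_2}(u,\partial u)$ in the $v$-equation and $\mathbf{N_2}(u,\partial v)$ in the $u$-equation at the level of $2\hh$ vector fields, combined with the scarcity of vector fields: with only two Klainerman vector fields and merely energy-space control of them, one cannot afford the usual Klainerman--Sobolev losses, so the decay of $u$ away from (and near) the light cone must be squeezed out of the dyadic-region energy estimates rather than from vector-field Sobolev embedding. Reconciling the weak two-dimensional wave decay $t^{-1/2}$ with the need for a time-integrable source in the energy inequality is the crux: the null condition gains one power of $\langle t-r\rangle/\langle t+r\rangle$ on the bad directions, the ghost weight converts a borderline term into a spacetime-$L^2$ gain, and the $v$-component's faster $t^{-1}$ Klein-Gordon decay absorbs the wave-Klein-Gordon cross terms. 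Managing the bookkeeping so that every one of the finitely many null-form nonlinearities $\mathbf{N_1}, \mathbf{N_2}$ closes uniformly — and in particular that the strong $u$-$v$ coupling does not create a feedback loop — is the heart of the proof, and is presumably what the bulk of the paper is devoted to establishing region by region.
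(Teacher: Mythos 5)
Your proposal follows essentially the same route as the paper: a bootstrap on pointwise decay coupled with vector field energy estimates localized to the dyadic regions $C^{\pm}_{TS}$ via Alinhac-type ghost weights, null-structure gains for the $u$--$u$ and $u$--$v$ interactions, a quasilinear energy correction, and Klainerman--Sobolev/interpolation bounds in the same regions, closing at $T_\epsilon = e^{c/\epsilon}$ from the $\langle t\rangle^{\tilde C\epsilon}$ energy growth. The only presentational caveat is that the paper does not obtain a fixed-time differential inequality $\frac{d}{dt}E^{[2\hh]}\lesssim B\,E^{[2\hh]}$ (it states explicitly that this is unavailable) but instead proves the increment bound $\sup_{[T,2T]}E \le (1+\tilde C\epsilon)E(T)$ on each dyadic time slab and sums over $\log T$ slabs, which yields the same $T^{\tilde C\epsilon}$ growth you describe.
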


\begin{remark}
The pointwise bounds stated in the theorem represent baseline estimates. In fact we obtain slightly 
better bounds in various regimes. These gains will be made specific later in the last section. 
\end{remark}

In the successor to this paper, we combine the bounds of this paper
with  asymptotic analysis for both the wave and the Klein-Gordon equation
in order to convert the above result into a global result:

\begin{theorem}\label{t:main-extra}
Assume that the initial data $(u[0],v[0])$ for \eqref{WKG}
satisfies
\begin{equation}
\| (u[0],v[0])\|_{\mathcal H^{2\hh}} + \| x \partial_x (u[0],v[0])\|_{\mathcal H^{\hh}}
+ \| x^2 \partial_x^2 (u[0],v[0])\|_{\mathcal H^0} \leq \epsilon.
\end{equation}
Then the equation \eqref{WKG} is globally well-posed in $\mathcal H^{2\hh}$,
with $L^2$ bounds as follows:
\begin{equation}
E^{[2\hh]}(t,u,v) \lesssim \epsilon^2 t^{C\epsilon},
\end{equation}
and pointwise bounds
\begin{equation}
|  \partial^j v | \lesssim \epsilon \langle t+r \rangle^{-1} , \qquad j = \overline{0,3},
\end{equation}
\begin{equation}
|  \partial^j u | \lesssim \epsilon \langle t +r\rangle^{-\frac12}  \langle t-r \rangle^{-\frac12}, \qquad j = \overline{1,3},
\end{equation}
\begin{equation}
|  \partial^j Z  u | \lesssim \epsilon, \qquad j = \overline{1,2}.
\end{equation}
\end{theorem}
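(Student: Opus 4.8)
The plan is to run a single global continuity argument on $[0,\infty)$, using Theorem~\ref{t:main} as the input on $[0,T_\epsilon]$ with $T_\epsilon=e^{c/\epsilon}$, where the asserted bound $E^{[2\hh]}\lesssim\epsilon^2\langle t\rangle^{C\epsilon}$ is already implied by Theorem~\ref{t:main} since $\langle t\rangle^{C\epsilon}\lesssim 1$ there; thus the real content is the propagation of the bounds past $T_\epsilon$. I would bootstrap on a maximal interval $[0,T^*)$ under the hypotheses $E^{[2\hh]}(t)\le C_1\epsilon^2\langle t\rangle^{C\epsilon}$ together with the four pointwise bounds of the statement, each with a constant doubled relative to the conclusion. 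By the continuation criterion of Theorem~\ref{t:local}(c), the solution persists past $T^*$ as long as $\int_0^{T^*}B\,dt<\infty$, which the bootstrapped bound $B(t)\lesssim\epsilon\langle t\rangle^{-1}$ secures; it then remains to improve all constants.

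For the energy, I would commute the Killing fields $Z$ (and the scaling field $\mathscr{S}$, and the translations) through the system: since the $Z$'s commute with the wave and Klein-Gordon operators, $\ZZ^\alpha(u,v)$ solves a system of the form \eqref{WKG} with quadratic, null-structured sources that are products of strictly lower-order $\ZZ$-derivatives, while the quasilinear top-order pieces are handled by the ghost-weight and dyadically localized energy estimates already developed for Theorem~\ref{t:main}. This gives $\tfrac{d}{dt}E^{[2\hh]}\lesssim B_{\mathrm{eff}}(t)\,E^{[2\hh]}$ with $B_{\mathrm{eff}}(t)\lesssim\epsilon\langle t\rangle^{-1}$ drawn from the bootstrapped decay of $v$ and of $\partial u$; integration yields $E^{[2\hh]}(t)\lesssim\epsilon^2\exp(c\epsilon\log\langle t\rangle)=\epsilon^2\langle t\rangle^{C\epsilon}$, closing the energy bootstrap once $\epsilon$ is small enough that the exponent constant lies below the one in the hypothesis. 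The weighted norms $\|x\partial_x(\cdot)\|_{\mathcal H^{\hh}}$ and $\|x^2\partial_x^2(\cdot)\|_{\mathcal H^0}$ propagate the same way after commuting with $\mathscr{S}$ and $\Omega_{0i}$, and combine with finite speed of propagation (Theorem~\ref{t:local}(b)) to confine the wave flux near the light cone and the Klein-Gordon mass to the interior.

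The core of the argument is the asymptotic analysis producing the \emph{non-growing} pointwise bounds. For the Klein-Gordon field I would pass to the profile $f=e^{-it\langle D\rangle}(\partial_t-i\langle D\rangle)v$, or equivalently work on hyperboloids in the interior cone, and show that the quadratic sources $\mathbf{N_1}(v,\partial u)$ and $\mathbf{N_2}(u,\partial u)$ — wave$\times$Klein-Gordon and wave$\times$wave interactions with Klein-Gordon output — become non-resonant after a normal-form transformation. The wave field's extra decay $|\partial u|\lesssim\epsilon\langle t\rangle^{-1}$ in the region $r<(1-\delta)t$, the mass gap, and the null structure of the $Q$-forms together render these terms effectively cubic, hence integrable at the $\langle t\rangle^{-1}$ Klein-Gordon rate in two dimensions, and absorb the residual near-resonant high-frequency and spatial-infinity errors. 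Fed through a Klainerman--Sobolev inequality using at most two vector fields $Z$ and the slow-growth energy bound, this yields $|\partial^j v|\lesssim\epsilon\langle t+r\rangle^{-1}$ for $j\le 3$. For the wave field, the source $\mathbf{N_1}(v,\partial v)+\mathbf{N_2}(u,\partial v)$ is concentrated in the interior where $v$ lives and decays like $\epsilon^2\langle t\rangle^{-2}$ there by the Klein-Gordon decay and the null structure; representing $u$ by Duhamel against the two-dimensional wave propagator and using the angular localization of the Klein-Gordon fields via $\Omega_{12}$ gives the radiation-field asymptotics and the bounds $|\partial^j u|\lesssim\epsilon\langle t+r\rangle^{-1/2}\langle t-r\rangle^{-1/2}$ for $1\le j\le 3$ and $|\partial^j Zu|\lesssim\epsilon$ for $j=1,2$, with no $\langle t\rangle^{C\epsilon}$ loss since that loss only enters at the top order of differentiation.

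The main obstacle is exactly this last point: extracting uniform-in-time pointwise control from an energy hierarchy that genuinely grows like $\langle t\rangle^{C\epsilon}$. This forces the asymptotic ODE analysis to be carried out strictly below the top regularity, and forces the quadratic wave$\leftrightarrow$Klein-Gordon interactions to be truly non-resonant after the normal form, so that the quasilinear null structure, the distinct propagation speeds, and the $x^2$-type spatial decay must all cooperate to tame the borderline $\langle t\rangle^{-2}$ Klein-Gordon self-interaction source in two space dimensions. Reconciling the normal-form transformation with the derivative-loss structure of the quasilinear nonlinearity, and with the ghost-weight energies controlling the top order, is where the bulk of the technical work of the second paper will lie.
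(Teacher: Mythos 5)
This theorem is not proved in the present paper at all: the authors explicitly defer it to the sequel (``In the successor to this paper, we combine the bounds of this paper with asymptotic analysis for both the wave and the Klein--Gordon equation in order to convert the above result into a global result''), so there is no in-paper proof to compare yours against. Your outline is consistent with that announced strategy, and the easy parts of your argument are sound: the energy propagation $\frac{d}{dt}E^{[2\hh]}\lesssim B_{\mathrm{eff}}E^{[2\hh]}$ with $B_{\mathrm{eff}}\lesssim \epsilon\langle t\rangle^{-1}$ does give $E^{[2\hh]}(t)\lesssim \epsilon^2\langle t\rangle^{C\epsilon}$, and the continuation criterion of Theorem~\ref{t:local}(c) is satisfied on any finite interval under your bootstrap.

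The genuine gap is the one you yourself flag and then do not close: producing the \emph{loss-free} pointwise bounds $|\partial^j v|\lesssim\epsilon\langle t+r\rangle^{-1}$, $|\partial^j u|\lesssim\epsilon\langle t+r\rangle^{-1/2}\langle t-r\rangle^{-1/2}$ from an energy hierarchy that grows like $\langle t\rangle^{C\epsilon}$. The Klainerman--Sobolev machinery of this paper (Proposition~\ref{p:KS} together with Proposition~\ref{p:energy}) can only ever return pointwise bounds carrying the same $\langle t\rangle^{\tilde C\epsilon}$ factor as the $X^T$ norms it takes as input — that is precisely why the bootstrap of this paper closes only up to $T_\epsilon=e^{c/\epsilon}$. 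To go global one must replace that step by a quantitative asymptotic/modified-scattering analysis: a normal form (or ODE along hyperboloids) for the Klein--Gordon component that is compatible with the quasilinear derivative loss, a resonance analysis of the wave$\times$wave$\to$Klein--Gordon interaction $\mathbf{N_2}(u,\partial u)$ (which is only \emph{near}-resonant, not non-resonant, for almost parallel high frequencies, as the introduction notes), and control of the resulting borderline cubic terms, which in two space dimensions sit exactly at the threshold where modified scattering rather than plain integrability is expected. Your proposal names these steps as ``where the bulk of the technical work of the second paper will lie,'' which is an accurate assessment but means the proposal is a program rather than a proof: the single inequality that converts almost-global into global is asserted, not established.
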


\subsection{The structure of the paper}
We begin in the next section with an overview of the main steps of the
proof. We follow a standard approach in which our proof has two main
steps, (i) vector field energy estimates, and (ii) pointwise bounds
derived from energy estimates (sometimes called Klainerman-Sobolev
inequalities). We depart from the standard setting in that our energy
estimates are space-time $L^2$ \emph{local energy bounds}, localized to dyadic regions
$C^{\pm}_{TS}$, where $T$ stands for dyadic time, $S$ for the dyadic
distance to the cone, and $\pm$ for the interior/exterior cone. Similarly,
our pointwise bounds are akin to Sobolev embeddings or interpolation inequalities
in the same type of regions.

The energy estimates are carried in the next three sections, in three steps:
(i) for the linearized equation, (ii) for the solution and its higher derivatives,
and finally (iii) for the vector fields applied to the solution.

Finally, the last section is devoted to the pointwise bounds, which are derived from the 
local energy bounds via interpolation inequalities in the same $C^{\pm}_{TS}$, with the extra step of 
also using the wave or Klein-Gordon equation in several interesting cases.

\subsection*{Acknowledgments }  The first author was supported by a Luce Assistant Professorship, by the Sloan Foundation, and by an NSF CAREER grant DMS-1845037. The second author was supported by an AMS Simons travel grant. We would like to thank Daniel Tataru for his suggestions and useful conversations that helped us improve the result and the presentation.

\section{An overview of the proof}

We begin with a prerequisite for the proof, which has to do with the local in time theory
for our evolution \eqref{WKG}. The three main properties, also summarized in Theorem~\ref{t:local}, are as follows:

\begin{enumerate}
\item Local well-posedness in $\H^4$ (also in $\H^n$ for  $n \geq 4$).

\item Continuation of $\H^4$ solutions for as long as $\partial^2(u,v)$ remains bounded, 
also with propagation of higher regularity, i.e.  bounds in $\H^n$ for all $n$. 

\item Uniform finite speed of propagation as long as $|\nabla v|$ stays pointwise small.
\end{enumerate}

Given these three facts, our proof is set up as a bootstrap argument, where the 
bootstrap assumption is on pointwise decay bounds for the solution.
These bounds are as follows:
\begin{equation}
\label{boot010}
 \vert Z  u\vert \leq C \epsilon\langle t-r\rangle^{\delta},
\end{equation}
\begin{equation}
\label{boot020}
\vert \partial u \vert  \leq C\epsilon \langle  t+r \rangle ^{-\frac{1}{2}} \langle t-r \rangle ^{-\frac{1}{2}},
\end{equation}
\begin{equation}
\label{boot01}
 \vert Z \partial^j u\vert \leq C \epsilon \langle t-r \rangle ^{-\delta_1}, \qquad j = \overline{1,2},
\end{equation}
\begin{equation}
\label{boot02}
\vert \partial^{j}  u \vert  \leq C\epsilon \langle  t+r \rangle ^{-\frac{1}{2}} \langle t-r \rangle ^{-\frac{1}{2}-\delta_1}, \qquad  j = \overline{2,3},
\end{equation}
\begin{equation}
\label{boot03}
\vert \partial^j v\vert  \leq C\epsilon \langle t+r \rangle ^{-1-\delta_1} \langle t-r \rangle ^{\delta_1}  , \quad j = \overline{1,3}.
\end{equation}

Here $0 < \delta \ll \delta_1$  are fixed small positive universal constant. On the other hand $C$ is a large universal constant,  
which will be improved as part of  the conclusion of the proof. The proof is structured 
into two main steps. For expository purposes we provide first a simplified outline of these two steps, and refine this 
later.

\bigskip 

{\bf 1. Energy estimates.}
Here one considers a solution to \eqref{WKG} in a time interval $[0,T_0]$, which is a-priori assumed to satisfy 
the bootstrap assumptions \eqref{boot010}, \eqref{boot020}, \eqref{boot01}, \eqref{boot02} and \eqref{boot03}. Then the conclusion is that 
the solution $(u,v)$ satisfies the following energy estimates in $[0,T_0]$:
\begin{equation}\label{main-energy}
E^{[2\hh]}(u,v)(t)  \lesssim \langle t\rangle ^{\tilde C \epsilon} E^{[2\hh]}(u,v)(0), \qquad t \in [0,T_0] .
\end{equation}
Here $\tilde C$ is a large constant which depends on $C$ in our bootstrap assumption, $\tilde C \approx C$.
However, the implicit constant in \eqref{main-energy} cannot depend on $C$. No restriction is imposed on  the lifespan bound $T_0$.

\bigskip 

{\bf 2. Pointwise bounds.}
Here we assume that we have a solution $(u,v)$ to \eqref{WKG} in a time interval $[0,T_0]$, which satisfies 
the energy  bounds
\begin{equation}\label{main-energy-re}
E^{[2\hh]}(u,v)(t)  \lesssim \epsilon \langle t\rangle ^{\tilde C \epsilon} , \qquad t \in [0,T_0] .
\end{equation}
Then we show that the solution $(u,v)$ satisfies the pointwise bounds
\begin{equation}
\label{get010}
 \Vert Z   u\Vert_{L^{\infty}} \leq  \epsilon \langle t\rangle ^{\tilde C \epsilon} , 
 \end{equation}
\begin{equation}
\label{get020}
\vert \partial u\vert  \leq  \epsilon \langle t\rangle ^{\tilde C \epsilon}  \langle t+r \rangle ^{-\frac{1}{2}} \langle t-r \rangle^{-\frac{1}{2}},
\end{equation}
\begin{equation}
\label{get01}
 \Vert Z \partial^j  u\Vert_{L^{\infty}} \leq  \epsilon \langle t\rangle ^{\tilde C \epsilon}  \langle t-r \rangle ^{-2\delta_1}, \qquad j = \overline{1,2},
\end{equation}
\begin{equation}
\label{get02}
\vert \partial^{j} u\vert  \leq  \epsilon \langle t\rangle ^{\tilde C \epsilon}  \langle t+r \rangle ^{-\frac{1}{2}} \langle t-r \rangle^{-\frac{1}{2}-2\delta_1}, \qquad  j = \overline{2,3},
\end{equation}
\begin{equation}
\label{get03}
 \vert \partial^j v\vert  \leq  \epsilon \langle t\rangle ^{\tilde C \epsilon}  \langle t+r\rangle ^{-1-2\delta_1}
 \langle t-r \rangle^{2\delta_1}
 , \quad j = \overline{0,3}.
\end{equation}
Here the lifespan $T_0$ is again arbitrary. 

\bigskip

In both steps, the time $T_0$ is arbitrary. However, in order to close the bootstrap argument one needs to recover \eqref{boot010}, \eqref{boot020}, 
\eqref{boot01}, \eqref{boot02} and \eqref{boot03} from \eqref{get010}, \eqref{get020}, \eqref{get01}, \eqref{get02} and \eqref{get03}.
This requires 
\[
T_0^{ \tilde C \epsilon} \ll C,
\]
which  is satisfied provided that
\[
T_0 \ll e^{\frac{c}{\epsilon}},
\]
i.e. our almost global result.

In the classical results on global or almost global well-posedness in
$3+1$ dimensions, one uses a large number of vector fields both in
the energy estimates and in the pointwise bounds, and the argument
works exactly as outlined above.  Notably, both steps require only
fixed time bounds, and the pointwise bounds are akin to an improved
form of the Sobolev embeddings, which are now referred to as
Klainerman-Sobolev estimates.

By contrast, such a strategy would be too naive in our work, both because 
we work in $2+1$ dimensions and there is less dispersive decay, and because our problem is strongly quasilinear.
Instead, a good portion of our analysis happens in space-time regions which are adapted to the light
cone geometry. Thus the next step is to describe our decomposition of the 
space-time.

We first consider a dyadic decomposition in time into sets
\begin{equation}
\label{ct}
C_T:=\left\{ T\leq t\leq 2T \right\}.
\end{equation}
 Further we dyadically decompose each of the $C_T$'s with respect to the size
of $t-r$, which  measures how far or close we are to the cone
\begin{equation}
\label{cts}
\begin{aligned}
&C_{TS}^{+}:=\left\{ (t,x)\, :\,   S\leq t-r\leq 2S, \, T\leq t\leq 2T \right\},  \mbox{ where } 1\leq S\lesssim T, \\
&C_{TS}^{-}:=\left\{ (t,x)\, :\,   S\leq r-t\leq 2S, \, T\leq t\leq 2T \right\}, \mbox{ where } 1\leq S \lesssim T,
\end{aligned}
\end{equation}
see Figure~\ref{f:D0}.

\begin{figure}[h]
\begin{center}
\begin{tikzpicture}[scale=1.5]

\draw[->] (0,-0.3) -- (0,3.5);
\draw[->] (-0.3,0) -- (5,0);
\node[below] at (4.9,0) {\small $r$};
\node[left] at (0,3.4) {\small $t$};

\draw [domain=0:3.5] plot(\x, \x);
\node[right] at (3.4,3.4) {\tiny $t=r$};

\draw[dashed] [domain=0:5] plot (\x, 1.5);
\node[left] at (0,1.5) {\small $T$};
\draw[dashed] [domain=0:5.3] plot (\x, 3);
\node[left] at (0,3) {\small $2T$};
 
\draw[red, thick] [domain=0.5:2] plot(\x+.5, \x+1);
 \draw[red, thick] [domain=.5:2] plot(\x, \x+1);
 \draw[red, thick] [domain=0:0.5] plot(\x+.5, 1.5);
 \draw[red, thick] [domain=1.5:2] plot(\x+.5, 3);
 
  \node[red,thick] at (0.9, 2.4) {\tiny $C^+_{TS}$};
 
 \draw[red, thick] [domain=2.5:4] plot(\x-.5, \x-1);
 \draw[red, thick] [domain=3.5:5] plot(\x-1, \x-2);
\draw[red, thick] [domain=2.5:3] plot(\x-.5, 1.5);
 \draw[red, thick] [domain=4:4.5] plot(\x-.5, 3);
   \node[red,thick] at (3.9, 2.4) {\tiny $C^{-}_{TS}$};

\fill[red!40,nearly transparent] (2,1.5) -- (2.5,1.5) -- (4,3) -- (3.5,3) -- cycle;

\fill[red!40,nearly transparent]  (0.5,1.5) -- (1,1.5) -- (2.5,3) -- (2, 3) -- cycle;

\end{tikzpicture}
\caption{1D vertical section of space-time regions $C^{\pm}_{TS}$}
\label{f:D0}
\end{center}
\end{figure}
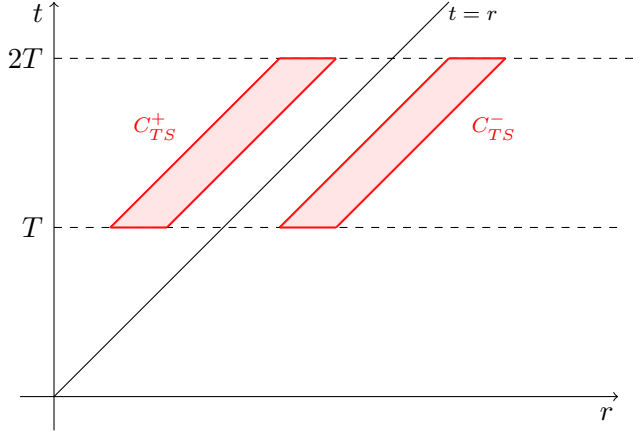

That still leaves the exterior region
\begin{equation}
\label{ctout}
C_T^{out} :=\left\{ T\leq t\leq 2T, \  r \gg T \right\}.
\end{equation}
Here $C^+_{TS}$ represents a spherically symmetric dyadic region
inside the cone with width $S$, distance $S$ from the cone, and time
length $T$.  $C^-_{TS}$ is the similar region outside the cone where,
far from the cone, we would have $T\lesssim S$. To simplify the
exposition we will use the notation $C_{TS}$ as a shorthand for either
$C^+_{TS}$ or $C^-_{TS}$. Such a decomposition has been introduced before 
by Metcalfe-Tataru-Tohaneanu~\cite{mtt} in a linear setting; we largely follow their notations.

In the above definition of the $C_{TS}$ sets we limit $S$ to $S \geq 1$ because our assumptions are invariant with respect to unit size translations. In particular,  this leaves out a conical shell region along the side of the cone $t=r$, which intersects both the interior and the exterior of the cone. To also include this region in our analysis we redefine 
\begin{equation}
\label{ct1}
C_{T1}:=\left\{ (t,x)\, :\,   \vert t-r\vert \leq 2, \, T\leq t\leq 2T \right\},  \mbox{ where } S\sim 1.
\end{equation}

This decomposition plays roles as follows in the two steps above:

\begin{enumerate}
\item While the energy estimate \eqref{main-energy} holds as stated, a key part of its proof involves 
separately estimating  the energy growth generated in each of the $C^{\pm}_{TS}$ set.
This in turn requires improved local energy  bounds for the solutions in such space-time regions.
On the upside, at the conclusion of this step we obtain not only fixed time energy estimates 
but also localized energy estimates in $C^{\pm}_{TS}$. 

\item The pointwise bounds in the second step are proved locally in each of the $C^{\pm}_{TS}$
regions, based on the local energy bounds there rather than the global fixed time bounds.
\end{enumerate}
 
One downside of the localizations required in our proof of the pointwise bounds is that it is rather delicate 
to close the arguments in the fixed time interval $[0,T_0]$, as it would require dealing with Sobolev type embeddings 
based on vector fields not necessarily compatible with the boundary. This is not a critical problem and there 
 are multiple ways to deal with it, for instance by choosing the boundaries more carefully than simply time slices.
Here, instead, we completely bypass the issue in a different way, by truncating the nonlinearity. 
Precisely, suppose we want to solve the  equation \eqref{WKG} up to some time $T_0$.
Then we consider a  smooth cutoff function $\chi_{T_0}$, which is supported in $[0, 2T_{0}]$ and equals $1$ in the time interval $[0, T_0]$. Thus,  $\chi_{T_0}$ selects the region $t < T_0$, and replaces the equation \eqref{WKG} with the truncated version  
\begin{equation} \label{WKG-cut}
\left\{
\begin{aligned}
&(\partial^2_t  - \Delta_x) u(t,x) = \chi_{T_0}(t) \left[  \mathbf{N_1}(v, \partial v)+ \mathbf{N_2}(u, \partial v)  \right]\,,  \\
&(\partial^2_t  - \Delta_x  + 1)v(t,x) =\chi_{T_0}(t)  \left[ \mathbf{N_1}(v, \partial_1 u) +  \mathbf{N_2}(u, \partial u)\right]\,
\end{aligned} \right.
 \quad (t,x)\in \left[0,+\infty \right) \times \mathbb{R}^2.
\end{equation}
Such a cutoff will make no difference in the proof, but instead insures that beyond time $2T_0$ the solution $(u,v)$
solves the corresponding linear constant coefficient problem.

This is similar to an idea introduced by Bourgain in the study of the semilinear dispersive equations \cite{bourgain}
 with a similar purpose, i.e. to avoid sharp time truncations in function spaces. 
 
Another feature of our proof is that we use the finite speed of propagation to isolate and consider separately 
the exterior region $\{ r \gg T\}$. Precisely, for large $R$ the problem localizes to the region 
\[
\{ |x| \approx R, \quad |t| \ll R \}.
\]
In this region the weights defining the initial data size are all constant, so it is enough to carry out 
standard energy estimates and obtain pointwise bounds via Sobolev embeddings at fixed time. 
This analysis is carried out in the next section, where we prove Theorem~\ref{t:local}. As a consequence of Theorem~\ref{t:local} and Sobolev embeddings, we immediately obtain the following:

\begin{proposition}\label{p:out}
Assume that the initial data $(u,v)[0]$ for \eqref{WKG} satisfy \eqref{data-main}. Then the 
equations \eqref{WKG} and \eqref{WKG-cut} are globally well-posed in the region $C^{out}:=\{ t \leq \frac{1+|x|}{4} \}$, with energy bounds
\begin{equation}
\| \partial^{\leq 2\hh} (u,v)[t] \|_{\mathcal H^{0}} + \| x \partial ^{\leq \hh}\partial (u,v)[t]\|_{\mathcal H^{0}}
+ \| x^2 \partial^2 (u,v)[t]\|_{\mathcal H^0} \leq \epsilon, 
\end{equation}
and pointwise bounds 
\begin{equation}
\begin{aligned}
&\|\langle x\rangle ^{1+\delta}\partial^{j} u \|_{L^\infty}   \lesssim \epsilon, \quad  j=\overline{2,3}, \\
&  \|\langle x \rangle ^{1+\delta} \partial^{j} v\|_{L^\infty} \lesssim \epsilon, \quad  j=\overline{0,3},
\end{aligned}
\end{equation}
and
\begin{equation}
\|\langle x\rangle \partial u \|_{L^\infty}  \lesssim \epsilon.
\end{equation}
\end{proposition}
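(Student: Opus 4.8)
The plan is to deduce Proposition~\ref{p:out} from Theorem~\ref{t:local} by exploiting finite speed of propagation to decouple the exterior region from the interior. First I would fix $R$ dyadic and localize attention to the region $\{|x| \sim R\}$. Because the data satisfy \eqref{data-main}, in a spatial annulus $|x| \sim R$ the weights $\langle x\rangle$, $\langle x\rangle^2$ are all comparable to $R$, $R^2$, so the data bound there reads, schematically,
\[
\| \partial^{\leq 2\hh}(u,v)[0] \|_{L^2(|x|\sim R)} + R \| \partial^{\leq \hh}\partial (u,v)[0]\|_{L^2(|x|\sim R)}
+ R^2 \| \partial^2 (u,v)[0]\|_{L^2(|x|\sim R)} \lesssim \epsilon.
\]
By Theorem~\ref{t:local}(b) (uniform finite speed of propagation for as long as $A$ stays small, which holds on the almost-global timescale because of the smallness built into \eqref{data-main}), the solution restricted to the truncated backward cone with vertex on $\{t = R/4\}$ over this annulus depends only on the data in a slightly enlarged annulus $|x| \sim R$; in particular the solution in $C^{out} \cap \{|x|\sim R\}$ is controlled purely by the data there. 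So the global-in-$x$ statement follows by summing the localized statements over dyadic $R$, after verifying the $\ell^2$-in-$R$ summation of the $L^2$ pieces reconstitutes the weighted $\mathcal H$-norms on the left of the conclusion.

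Second, within each such annular region I would run the energy estimate of Theorem~\ref{t:local}(c): on the time interval $[0, R/4]$ one has $\int_0^t B\,ds \lesssim \epsilon$ (since $B$ is controlled by the bootstrap/smallness, or more cleanly by the local theory started from $\epsilon$-small data and a short time), hence $\|(u,v)(t)\|_{\mathcal H^k(|x|\sim R)} \lesssim \|(u,v)(0)\|_{\mathcal H^k(|x|\sim R)}$ uniformly. Applying this with $k = 2\hh$, and separately tracking the weighted quantities $x\partial^{\leq\hh}\partial(u,v)$ and $x^2\partial^2(u,v)$ — which in the annulus are just the unweighted $\mathcal H^{\hh}$ and $\mathcal H^0$ norms multiplied by the constants $R$ and $R^2$ — gives the energy bounds in the statement. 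The key observation making this clean is that $x$ and $x^2$ are, up to constants, \emph{inert} in a fixed dyadic annulus, so one never has to commute the weights through the equation; one just runs plain energy estimates and restores the weights at the end.

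Third, for the pointwise bounds I would use Sobolev embedding at fixed time: in two space dimensions $H^2 \hookrightarrow L^\infty$, and localized to $|x|\sim R$ the Gagliardo–Nirenberg/Sobolev inequality gives $\|f\|_{L^\infty(|x|\sim R)} \lesssim R^{-1}\|f\|_{H^2(|x|\sim R)}$ after rescaling the annulus to unit size (more precisely one picks up the natural scaling weights). Applying this to $f = \partial^j v$, $j = \overline{0,3}$, using the $\mathcal H^{0}$ (really $H^{\leq 2}$) energy bound for the high derivatives $\partial^{2}(u,v)$ weighted by $R^2$ and interpolating, one gets $|\partial^j v| \lesssim \epsilon R^{-1-\delta}$, i.e. the claimed $\langle x\rangle^{1+\delta}$ decay; the $\delta$ loss is harmless and comes from trading a tiny bit of the $\hh$-regularity for the extra decay, or from summing the dyadic pieces. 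The bounds for $\partial^j u$, $j=\overline{2,3}$, are identical. The bound $\|\langle x\rangle \partial u\|_{L^\infty} \lesssim \epsilon$ is slightly different since $\partial u$ only sits at one derivative of regularity above $L^2$: here I would use the weighted $L^2$ bound on $\partial^{\leq \hh}\partial u$ (which carries an $R$ weight) together with Sobolev embedding, with $\hh \geq 7$ giving more than enough room.

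I expect the only genuinely nontrivial point to be the bookkeeping that glues the dyadic-in-$R$ pieces together: one must check that the almost-global time restriction $t \leq \frac{1+|x|}{4}$ is exactly what finite speed of propagation needs (so that the domain of dependence of $C^{out}\cap\{|x|\sim R\}$ stays inside $\{|x| \sim R\}$ enlarged by an $O(R)$ but sub-annular amount, preserving the weight comparability), and that the $\ell^2$ sum over $R$ of the localized $\mathcal H$-norms and the $\sup$ over $R$ of the pointwise bounds reproduce exactly the norms in the statement without losing the $x$, $x^2$ weights. None of this is deep — it is the standard "exterior region is easy" argument — but it is where all the care has to go; everything else is a direct quotation of Theorem~\ref{t:local} and two-dimensional Sobolev embedding on a dyadic annulus.
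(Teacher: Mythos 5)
Your proposal follows essentially the same route as the paper: dyadic localization in $|x|$, finite speed of propagation to reduce to data on an annulus $\{|x|\sim R\}$ where the weights are inert, the energy estimate of Theorem~\ref{t:local}(c) on $[0,R/4]$, and fixed-time Sobolev embedding for the pointwise bounds. The only caution is that your alternative justification of $\int_0^{R/4} B\,dt \lesssim \epsilon$ via ``short-time local theory'' does not work, since the interval has length $\sim R$ and is not short; one genuinely needs the bootstrap assumption $\|\partial^2(u,v)\|_{L^\infty}\lesssim \sqrt{\epsilon}\,R^{-1}$, closed via the Sobolev bound $\|\partial^2 u\|_{L^\infty}\lesssim \epsilon R^{-2}\log R$, which is exactly what the paper does.
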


For later use, we also state an alternative form of the above proposition, where the smallness assumption on the initial data
is replaced by bootstrap assumptions akin to \eqref{boot020}, \eqref{boot02}, \eqref{boot03} but restricted to the exterior region. Denoting 
\begin{equation}
\begin{aligned}
    E^{out, [2\hh]}(u,v)(t) = \ & \| \partial^{\leq 2\hh} (u,v)[t] \|_{\mathcal H^{0}(C^{out})} + \| x \partial ^{\leq \hh}\partial  (u,v)[t]\|_{\mathcal H^{0}(C^{out})}\\
& \qquad + \| x^2 \partial^2 (u,v)[t]\|_{\mathcal H^0(C^{out})}
\end{aligned}
\end{equation}
we have

\begin{proposition}\label{p:out-boot}
Let $(u,v)$ be a solution for \eqref{WKG} in $C^{out}$
which satisfies the bounds
\[
| \partial u|+|\partial v| \ll 1, \qquad |\partial^2 u|+|\partial^2 v| \ll \langle x \rangle^{-1}.
\]
Then we have the uniform global bounds
\[
 E^{out, [2\hh]}(u,v)(t) \lesssim E^{out, [2\hh]}(u,v)(0).
\]
\end{proposition}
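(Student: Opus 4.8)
\emph{Proof proposal.} The plan is to localize the whole analysis to dyadic spatial shells, using finite speed of propagation, and to exploit the fact that on each shell the weights $\langle x\rangle,\langle x\rangle^2$ entering $E^{out,[2\hh]}$ are comparable to constants; the weighted exterior energy then reduces, shell by shell, to a plain higher order energy, and the estimate becomes a classical commuted energy estimate of the type underlying Theorem~\ref{t:local}(c). First I would fix $R\gg1$ together with a bump function $\chi_R$ adapted to $\{|x|\sim R\}$ with $|\partial^k\chi_R|\lesssim R^{-k}$. Since $|\partial u|+|\partial v|\ll1$, part (b) of Theorem~\ref{t:local} gives uniform finite speed of propagation; as every point of $C^{out}=\{t\leq\tfrac{1+|x|}{4}\}$ with $|x|\sim R$ satisfies $t\lesssim R$, the solution restricted to $C^{out}\cap\{|x|\sim R\}$ is determined by the Cauchy data on a slightly enlarged shell of the same dyadic size. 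It therefore suffices to prove, for each $R$ and with
\[
\mathcal E_R(t):=\big\|\chi_R\,\partial^{\leq 2\hh}(u,v)[t]\big\|_{\H^0} + R\big\|\chi_R\,\partial^{\leq\hh}\partial(u,v)[t]\big\|_{\H^0} + R^2\big\|\chi_R\,\partial^2(u,v)[t]\big\|_{\H^0},
\]
the bound $\mathcal E_R(t)\lesssim\mathcal E_R(0)$, and then to sum over dyadic $R$ using the bounded overlap of the $\chi_R$, since $E^{out,[2\hh]}(u,v)(t)^2\approx\sum_R\mathcal E_R(t)^2$.

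On a fixed shell I would then run the standard energy estimate for \eqref{WKG} commuted with $\partial^\alpha$, $|\alpha|\leq 2\hh$, multiplied by $\chi_R^2$, and also for the corresponding equations satisfied by $\partial(u,v)$ and $\partial^2(u,v)$ carrying the weights $R$ and $R^2$. The commutator $[\partial_t^2-\Delta,\chi_R]$ is of first order with coefficients $O(R^{-1})$; the quadratic null-form nonlinearities, after Leibniz differentiation, produce terms in which one factor carries almost all the derivatives (and, for the last two pieces, the weight) and is placed in $L^2$, where it is controlled by $\mathcal E_R$, while the complementary factor carries at most $\sim\hh+1$ derivatives and is placed in $L^\infty$, where it is controlled by the pointwise bootstrap bounds \eqref{boot020}, \eqref{boot02}, \eqref{boot03} together with Sobolev embedding on the shell (affordable because $\hh\geq 7$, so $\hh+3\leq 2\hh$). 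All of these contributions to $\tfrac{d}{dt}\mathcal E_R^2$ are bounded by $c\,(B+R^{-1})\,\mathcal E_R^2$, where $B=\sum_{|\alpha|=2}(\|\partial^\alpha u\|_{L^\infty}+\|\partial^\alpha v\|_{L^\infty})$. The key point is that the hypothesis $|\partial^2 u|+|\partial^2 v|\ll\langle x\rangle^{-1}$ forces $B\ll R^{-1}$ on the shell, while the relevant time interval has length $t\lesssim R$, so $\int_0^t(B+R^{-1})\,ds\lesssim1$; Gronwall then yields $\mathcal E_R(t)\lesssim\mathcal E_R(0)$ with a constant independent of $R$.

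The main obstacle is the derivative and weight bookkeeping in the second step: one must check that in the Leibniz expansion of $\partial^\alpha\mathbf{N}_i$ no term requires more than the $2\hh+1$ derivatives of $(u,v)$ that are actually available, which — exactly as in the interior energy estimates of the later sections — is arranged by using the null structure of the $Q_{ij},Q_{0i},Q_0$ and, when needed, by integrating by parts to shift a derivative off the worst factor; and one must check that these manipulations are compatible with the weights $R,R^2$, which is immediate since these weights are locally constant on the shell and pass harmlessly through every integration by parts. Beyond this, the argument is routine: the exterior region is precisely the regime in which the light-cone geometry is inactive and the constant weights make everything behave like a classical finite-speed-of-propagation energy estimate, which is why it can be treated separately, quickly, in parallel with Proposition~\ref{p:out}.
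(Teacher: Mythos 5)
Your proposal is correct and follows essentially the same route as the paper: a dyadic decomposition into spatial shells $|x|\sim R$ on which the weights are constant, combined with the observation that the hypothesis forces $B\ll R^{-1}$ while finite speed of propagation limits the relevant time interval to length $\lesssim R$, so that $\int B\,dt\lesssim 1$ and the (quasilinear) energy estimate of Theorem~\ref{t:local}(c) closes with a constant independent of $R$. The only cosmetic difference is that the paper localizes by restricting the data to an enlarged annulus and invoking finite speed of propagation, rather than commuting a cutoff $\chi_R$ through the equation, which spares it the commutator bookkeeping you describe.
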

As we will see in the next section, the proof of this proposition is a step in the proof of the previous proposition.

For the bulk part, where we track the evolution of the vector field energy $E^{[2\hh]}(u,v)$, we define 
a stronger norm $X^T$ for $(u,v)$ associated to dyadic time intervals, as well as a similar norm $Y^T$ 
for the right hand side of the equation. These norms will be introduced later in the paper; their definitions are given in \eqref{XT} for $X^T$, respectively in \eqref{YT} for $Y^T$. Then we replace the energy bound \eqref{main-energy}
with the stronger $X^T$ and $Y^T$ bounds:

\begin{proposition}\label{p:energy}
Let $(u,v)$ be a solution to \eqref{WKG} or \eqref{WKG-cut} in $[0,T_0]$ which satisfies the bootstrap bounds
\eqref{boot010}, \eqref{boot020}, \eqref{boot01}, \eqref{boot02} and \eqref{boot03}. Then we have 
\begin{equation}\label{main-energy+}
\| \ZZ^\gamma (u,v)\|_{X^T}  \lesssim \epsilon  T^{\tilde C \epsilon} , \qquad  |\gamma| \leq 2\hh, \quad T\in [0,T_0].
\end{equation}
In addition, 
\begin{equation}\label{main-RHS+}
\| \ZZ^\gamma (\Box u,(\Box+1)v)\|_{Y^T}  \lesssim \epsilon  T^{\tilde C \epsilon}, \qquad  |\gamma| \leq \hh,  \quad  T\in [0,T_0].
\end{equation}
\end{proposition}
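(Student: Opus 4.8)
The plan is to run a bootstrap-within-bootstrap: assuming the pointwise bounds \eqref{boot010}--\eqref{boot03} on $[0,T_0]$, we propagate the $X^T$ norm of the vector-field differentiated solution by a Gronwall-type argument performed dyadically in $T$. First I would set up the standard energy identity: for a multiindex $\alpha$ with $|\alpha|\le 2\hh$, commuting $\ZZ^\alpha$ with the linear operators $\Box$ and $\Box+1$ produces source terms of two types — (i) the genuine nonlinear contributions $\ZZ^\alpha\mathbf{N}_j$ distributed among the derivatives by Leibniz, and (ii) commutator terms coming from the quasilinear structure, i.e. from the fact that $\mathbf{N_2}(u,\partial v)$, $\mathbf{N_2}(u,\partial u)$ contain the top-order variable $u$ as a coefficient of a second derivative. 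The quasilinear piece is handled as usual by symmetrizing the energy (modifying the energy functional $E$ by lower-order terms built from the quasilinear coefficients), which costs a factor $e^{c\int B\,ds}$ but, crucially, by the null structure and by \eqref{boot020}--\eqref{boot03} the relevant $\partial u$, $\partial^2 u$ quantities decay, so $\int_0^T B\,ds \lesssim \epsilon \log T$, producing exactly the $T^{\tilde C\epsilon}$ loss and nothing worse.

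Next I would estimate the nonlinear source terms in the $Y^T$ norm. This is where the dyadic $C^{\pm}_{TS}$ decomposition and the null forms \eqref{null forms} enter. One splits $\ZZ^\alpha\mathbf{N}_j = \sum \ZZ^{\alpha_1}(\cdot)\,\ZZ^{\alpha_2}(\cdot)$ and in each product puts the lower-order factor in $L^\infty$ (using the bootstrap pointwise bounds, noting that for $|\alpha_2|\le \hh$ the factor still obeys the decay encoded in \eqref{boot010}--\eqref{boot03} after Klainerman--Sobolev, while for the top factor $|\alpha_1|$ close to $2\hh$ one uses the $L^2$/$X^T$ norm). The null structure is used exactly as in Klainerman's classical argument: the symbol of $Q_{ij}, Q_{0i}, Q_0$ vanishes on the light cone, which in the $C^{\pm}_{TS}$ regions translates into a gain of a power of $S/T$ (equivalently, one can trade a "bad" derivative for a "good" derivative $\partial_t+\partial_r$ or an angular/scaling derivative, which is itself a Klainerman vector field up to the factor $\langle t-r\rangle$). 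Summing the resulting geometric series over dyadic $S$ with $1\le S\lesssim T$ (plus the endpoint region \eqref{ct1}) and over $T\le T_0$ gives \eqref{main-RHS+}; feeding this into the energy identity and closing the dyadic Gronwall gives \eqref{main-energy+}. The exterior region $C^{out}$ is disposed of separately by invoking Proposition~\ref{p:out} / Proposition~\ref{p:out-boot}, whose hypotheses are implied by the bootstrap assumptions.

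The low-order statement \eqref{main-RHS+} for $|\alpha|\le\hh$ is slightly different and easier: here both factors in each quadratic term can afford to lose fewer vector fields, so the nonlinearity is controlled by placing \emph{both} factors in the pointwise-bounded regime when possible, or one factor in $L^\infty$ and one in the (low) energy, and no $X^T$ input beyond $|\alpha|\le\hh$ is needed; the null condition still supplies the $S/T$ gain that makes the $S$-sum converge. One must be a little careful that the weight $\hh|\beta|$ in the definition of $|\gamma|$ is chosen large enough that every Leibniz split of $\ZZ^\alpha$ with $|\alpha|\le 2\hh$ puts at least one factor at order $\le\hh$ where the pointwise bounds live — this is the role of $\hh\ge 7$.

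The main obstacle I expect is not the high-order energy estimate per se but the bookkeeping of the \emph{localized} energy component of the $X^T$ norm: one needs not just the fixed-time bound $E^{[2\hh]}(t)\lesssim \epsilon^2 T^{\tilde C\epsilon}$ but genuine control of the energy flux through the lateral boundaries of each $C^{\pm}_{TS}$, i.e. a local energy / Morawetz-type inequality adapted to these regions (this is the Metcalfe--Tataru--Tohaneanu mechanism, and morally the Alinhac ghost-weight mechanism). Establishing that the ghost-weight multiplier, applied to the null-form nonlinearity, produces a \emph{positive} space-time bulk term that dominates the error — uniformly in the dyadic parameters $T,S$ and without losing the smallness in $\epsilon$ — is the delicate heart of the argument. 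The quasilinear modification of the multiplier, which must be done simultaneously with the ghost weight, is the part most likely to require care to keep all constants independent of $C$.
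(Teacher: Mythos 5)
Your proposal follows essentially the same route as the paper: a dyadic-in-$T$ Gronwall argument built on the quasilinear modification of the energy, Alinhac's ghost weight to produce the positive bulk term that controls the localized $X^T$ component in each $C^{\pm}_{TS}$, the null forms exploited by trading a transversal derivative for a tangential one (gaining $S/T$), Leibniz splitting with interpolation between the $L^\infty$ bootstrap bounds and the $L^2$/$X^T$ norms, and the exterior region disposed of via Proposition~\ref{p:out-boot}. The only small inaccuracy is your attribution of the constraint $\hh\geq 7$ to the Leibniz bookkeeping — in the paper that threshold comes from the Klainerman--Sobolev step in the last section, not from the energy estimates.
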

The bounds in this  Proposition will be proved in Section~\ref{s:linearization} where we consider the linearized equations, in Section~\ref{s:higher} which is devoted to the higher energy estimates, and in Section~\ref{s:fields} where we establish the vector fields bounds. 

In this context, our pointwise bounds will be linear and localized to dyadic time regions:
\begin{proposition}\label{p:KS}
Let $(u,v)$ be functions in $C_T$ which satisfy the bounds 
\begin{equation}\label{main-energy++}
\| \ZZ^\gamma (u,v)\|_{X^T}  \leq 1, \qquad  |\gamma| \leq 2\hh,
\end{equation}
as well as
\begin{equation}\label{main-RHS++}
\| \ZZ^\gamma (\Box u,(\Box+1)v)\|_{Y^T}  \leq 1, \qquad  |\gamma| \leq \hh.
\end{equation}
Then we have  the pointwise bounds
\begin{equation}
\label{get01+0}
 \Vert Z  u\Vert_{L^{\infty}} \lesssim 1, 
\end{equation}
\begin{equation}
\label{get02+0}
\vert \partial  u \vert  \lesssim T^{-\frac12}  S^{-\frac12}, 
\end{equation}
\begin{equation}
\label{get01+}
 \Vert Z \partial^j  u\Vert_{L^{\infty}} \lesssim  S^{-2\delta_1},  \qquad j = \overline{1,2},
\end{equation}
\begin{equation}
\label{get02+}
\vert \partial^{j}  u \vert  \lesssim T^{-\frac12}  S^{-\frac12-2\delta_1}, \qquad  j = \overline{2,3},
\end{equation}
\begin{equation}
\label{get03+}
 \vert \partial^j v\vert  \lesssim T^{-1-2\delta_1} S^{2\delta_1}, \quad j = \overline{0,3}.
\end{equation}
\end{proposition}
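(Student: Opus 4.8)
The plan is to prove the bounds \eqref{get01+0}--\eqref{get03+} one dyadic region at a time, reducing everything to a single localized Sobolev (interpolation) inequality. The region $C_T$ splits into the interior/exterior pieces $C^{\pm}_{TS}$ from \eqref{cts} with $1 < S \lesssim T$, the conical shell $C_{T1}$ from \eqref{ct1}, and the exterior region $\{r \gg T\}$ from \eqref{ctout}. In the exterior region all the weights $\langle x\rangle$ are essentially constant, so the desired bounds follow from fixed-time energy estimates and Sobolev embeddings exactly as in the proof of Proposition~\ref{p:out-boot}; thus the heart of the matter is a single estimate on a generic $C_{TS}$, of which $C_{T1}$ is the limiting case $S \sim 1$. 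Fix such a region and pass to cone-adapted coordinates $(q,s,\omega)$, where $s = t - r$ (so $|s| \sim S$, or $|s| \le 2$ for $C_{T1}$), $q = t+r \sim T$, and $\omega$ is the angle; in these coordinates $C_{TS}$ is an anisotropic slab of sidelengths $\sim T$ in $q$, $\sim S$ in $s$ and $\sim T$ in the arc-length variable, carrying the measure $dx\, dt \sim T\, dq\, ds\, (r\, d\omega)$.

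The mechanism is the usual Klainerman--Sobolev one, adapted to this geometry: after rescaling $C_{TS}$ to a box of unit size, the Klainerman vector fields become unit-size derivatives tangent to the light cone. The rotation $\Omega_{ij}$ is exactly the angular derivative $\partial_\omega = r \cdot (\text{unit arc-length derivative})$, while the radial combination $\sum_i \tfrac{x_i}{r}\Omega_{0i} = t\partial_r + r\partial_t = q\,\partial_q - s\,\partial_s$ recovers $q\,\partial_q \sim T\partial_q$ up to an $s\,\partial_s$ error that is harmless since $S \lesssim T$, and the spherically tangential boosts behave like the rotations near the cone. The only genuinely transversal derivative $\partial_s = \partial_{t-r}$ is controlled solely by the plain derivatives $\partial$, and hence costs a factor $S$ under the rescaling. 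Therefore, in the rescaled unit box, the operators $Z$ together with $S\partial$ span the full set of unit-size spacetime derivatives, and a Sobolev (or Gagliardo--Nirenberg) embedding $W^{k,2} \hookrightarrow L^\infty$ there, followed by undoing the rescaling and bookkeeping the volume factor $\sim (T^2 S)^{1/2}$ together with the $S$'s attached to each $\partial$, expresses $\|w\|_{L^\infty(C_{TS})}$ in terms of weighted local $L^2$ norms of $(S\partial)^a Z^b w$ with $a + b \le k$. For $w$ among $u$, $Zu$, $\partial u$, $Z\partial^j u$ with $j \le 2$, $\partial^{j+1} u$ with $j \le 3$, and $\partial^j v$ with $j \le 3$, the total number of vector fields stays within the budget $|\alpha| \le 2\hh$ (recall $\hh \ge 7$), so these local $L^2$ norms are controlled by \eqref{main-energy++}; the weights built into $X^T$ are designed precisely so that the resulting powers are $T^{-1/2}S^{-1/2}$ for $\partial u$ (wave decay), $T^{-1}$ for $v$ (Klein--Gordon decay), and $1$ for the $Z$-differentiated quantities. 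For \eqref{get03+} one additionally uses that, unlike the wave energy, the Klein--Gordon energy controls $v$ itself in $L^2$ through the mass term, which is what upgrades the bare wave rate $T^{-1/2}S^{-1/2}$ to the isotropic $T^{-1}$ inside the cone.

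The companion hypothesis \eqref{main-RHS++} on $(\Box u, (\Box+1)v)$ enters because a spacetime interpolation inequality requires control of all spacetime derivatives up to order $k$, in particular of $\partial_t^2$-type derivatives, whereas $X^T$ only records first-order derivatives decorated with extra vector fields. We close this through the equations: $\partial_t^2 u = \Delta u + \Box u$ and $\partial_t^2 v = \Delta v - v + (\Box+1)v$ trade each excess time derivative for spatial derivatives, absorbed by $X^T$, plus a source term, absorbed by $Y^T$; iterating and commuting with $\ZZ^\alpha$ reduces everything to \eqref{main-energy++}--\eqref{main-RHS++}. In the polar/null form $\Box = 4\,\partial_{t-r}\partial_{t+r} - \tfrac1r\partial_r - \tfrac1{r^2}\partial_\omega^2$ the same manipulation converts a transversal derivative $\partial_{t-r}$ applied to a good derivative $\partial_{t+r}w$ into $\Box w$ plus an angular term $r^{-2}\partial_\omega^2 w$ (controlled by $Z$) and a lower-order term, which is what makes the higher-derivative bounds \eqref{get02+} reachable --- but only at the cost of the genuine $S^{-\delta}$ loss at top regularity, reflecting that at the very top the available number of vector fields no longer suffices to win the weighted interpolation outright.

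The main obstacle I expect is not any single estimate but the anisotropic bookkeeping: carefully separating the ``good'' cone-tangential derivatives, supplied by $Z$ at the $T$-scale, from the ``bad'' transversal derivative, supplied only by $\partial$ at the $S$-scale; verifying that the weights hard-wired into the norms $X^T$ and $Y^T$ exactly match the volume factors and $S$-powers produced by the rescaling; and checking that the degenerate regimes --- the conical shell $S \sim 1$ of \eqref{ct1} and the transition $S \sim T$ to the exterior --- are covered uniformly by the same argument. A secondary difficulty is obtaining the sharp Klein--Gordon rate $T^{-1}$ with only the two Klainerman vector fields $\{\Omega_{ij}, \Omega_{0i}\}$ available, which forces reliance on the mass term and, where necessary, on the equations themselves rather than on a larger commuting family.
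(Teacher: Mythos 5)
Your overall skeleton matches the paper's: decompose into the dyadic regions $C^{\pm}_{TS}$, pass to coordinates in which each region becomes a unit box, and run an anisotropic Sobolev/interpolation argument in which the two $Z$ fields supply the cone-tangential derivatives at scale $T$, the plain derivatives supply the transversal one at scale $S$, and the $Y^T$ hypothesis is used through the equation to convert the doubly-transversal derivative $(\partial_\sigma-\partial_\phi)(\partial_\sigma+\partial_\phi)u$ into $e^{2\sigma}\Box u$ plus tangential terms. The paper implements this in spherical hyperbolic coordinates rather than your null coordinates $(q,s,\omega)$ (and, inside the cone, partly via $2$d Sobolev on the hyperboloids $H_\rho$, which is why the $X^T$ norm carries the $L^2(H_\rho)$ component); these are cosmetic differences.

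There is, however, one genuine gap: the bound \eqref{get02+}. You describe the exponent $S^{-\frac12-\delta}$ as ``the genuine $S^{-\delta}$ \emph{loss} at top regularity,'' but since $S\geq 1$ this is a \emph{gain} of extra decay over the baseline rate $T^{-\frac12}S^{-\frac12}$ that your interpolation argument produces for $\partial u$. Nothing in your proposal manufactures this gain: the null-form manipulation of $\Box$ that you invoke only trades a transversal derivative for tangential ones and a source term, and at best reproduces the $\delta=0$ rate. In the paper this is the content of the subsection ``Extra gain away from the cone'': one takes a Littlewood--Paley decomposition in the hyperbolic coordinates, with dyadic frequencies $\lambda$ for $Z$, $\mu$ for $\partial_\sigma-\partial_\phi$ and $\nu$ for $\partial_\sigma+\partial_\phi$, applies Bernstein, and maximizes the resulting expression over $(\lambda,\mu,\nu)$; for an intermediate number of derivatives $0<j<\hh/2$ the maximum sits strictly inside the frequency range and yields an extra negative power of $S$ (or $T$), precisely under the condition $2j<\hh$ --- which is where the hypothesis $\hh\geq 7$ is actually consumed (you cite $\hh\geq 7$ only as a derivative ``budget''). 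A related, secondary omission is the Klein--Gordon bound $T^{-1}$ \emph{outside} the cone: there are no spacelike hyperboloids foliating $C^-_{TS}$ on which to run your mass-term argument, and the paper instead exploits the elliptic structure of $e^{2\sigma}(\Box+1)$ in the exterior hyperbolic coordinates together with the same frequency-optimization scheme. Without these two frequency-space arguments the estimates \eqref{get02+} and (in the exterior) \eqref{get03+} are not reached.
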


Taken together, the last three propositions imply the conclusion of our main result in Theorem~\ref{t:main}.
This Proposition is proved in the last section of the paper.

\section{Local well-posedness, continuation and the 
 exterior region \texorpdfstring{$C^{out}$}{}}

In this section we prove Theorem~\ref{t:local}. As a consequence, we derive Proposition~\ref{p:out}.
\begin{proof}[Proof of Theorem~\ref{t:local}]
 The proof of this theorem is very similar to the proof of the local well-posedness for quasilinear wave equations (see for instance H\"ormander \cite{Hormander}, Sogge \cite{Sogge2008}, Racke \cite{Racke}), as well as the more modern treatment in Ifrim-Tataru \cite{MR4557379} . We sketch here its main steps. 
 \bigskip
 
 \textit{(i) Energy estimates and the quasilinear energy.}
 A key part of the argument is played by energy estimates, which we discuss
 here in a simpler setting, for the inhomogeneous linear problem 
 \begin{equation}
\label{WKG-lin0}
\left\{
\begin{aligned}
&(\partial_t^2-\Delta_{x})U(t,x)= \mathbf{N_1}(v, \partial V)  + \mathbf{N_2}(u, \partial V) + F \\
&(\partial_t^2-\Delta_{x}+1)V(t,x)=\mathbf{N_1}(v, \partial U) + \mathbf{N_2}(u, \partial U)+G,\\
\end{aligned}
\right.
\end{equation}
with initial conditions 
\begin{equation}\label{initial data}
\left\{
\begin{aligned}
& (U,V)(0,x) = (U_0(x),V_0(x))\,,\\
& (\partial_t U,\partial_t V)(0,x) =  (U_1(x), V_1(x)).
\end{aligned}
\right.
\end{equation}
At leading order this system agrees with the linearized equation discussed in the next section, Section~\ref{s:linearization}.
 
 Our starting point in the proof of the energy estimates is the energy functional associated to the corresponding linear equation
\[
E\left(U,V\right) := \frac{1}{2}\int_{\mathbb{R}^2} U_t ^2 + U_x^2 +V_t^2 + V_x^2 +V^2 \, dx= \int_{\mathbb{R}^2}e_0(t,x) \, dx,
\]
where $e_0$ is the linear energy density
\[
e_0(t,x)=\frac{1}{2}[U_t ^2 + U_x^2 +V_t^2 + V_x^2 +V^2 ].
\]
This would be the obvious candidate for the energy functional with
respect to which we would like to prove the energy estimates required for
the local well-posedness result.  However, the right hand side 
of the equations \eqref{WKG-lin0} contains second order derivatives of $(U,V)$,
so if one tries to prove energy bounds via this functional, there would be a loss of derivatives. This  is a common issue when working with
quasilinear nondiagonalisable hyperbolic systems of PDEs. To avoid
this loss of derivatives we consider a quasilinear type modification
of this energy, which has the form
\begin{equation}
\label{Equasidef}
E^{quasi}(U,V):= E\left(U,V\right)+ \int_{\mathbb{R}^2}B_1(v; U, V) + B_2(u; U, V)\, dx=\int_{\mathbb{R}^2} e^{quasi}(t,x) \, dx ,
\end{equation}
where the quasilinear energy density is
\[
e^{quasi}(t,x):=e_0(t,x)+B_1(v; U, V) +  B_2(u; U, V).
\]
Here the trilinear forms $B_1$ and $B_2$ are associated to the null forms 
$\mathbf N_1$, $\mathbf N_2$ in \eqref{WKG} in a linear fashion. Precisely,
 corresponding to the three bilinear forms in \eqref{null forms} we have the associated corrections
\begin{equation}\label{corrections0}
\left\{
\begin{aligned}
& B_{0i}(w; U,V):= w_t\,  U_i\, \partial V, \\
& B_{ij}(w; U, V):= w_iU_j\, \partial V - w_j \,U_i\, \partial V, \\
 & B_{0}(w; U, V):= - w_x\cdot U_x\, \partial V,
 \end{aligned}
 \right.
\end{equation}
for $w=u,v$.

We will use the above energy functional in order to study the well-posedness
of \eqref{WKG-lin0} in $\mathcal H^0$. For this we assume that $(u,v)$
are known, and that we control in a pointwise fashion  their associated control parameters $(A,B)$ introduced in \eqref{norm A}, \eqref{norm B}. Then we have
\begin{lemma}\label{l:quasi-lin}
Assume that $A \leq \delta \ll 1$ and $B \in L^\infty$.
Then the equation \eqref{WKG-lin0} is well-posed in $\H^0$  and the following properties hold:

(i) Energy equivalence:
\begin{equation}\label{e-equiv}
    E^{quasi}(t,U,V) = (1+O(\delta)) \| (U,V)[t]\|_{\mathcal H^0}^2.
\end{equation}

(ii) Energy estimate:
\begin{equation}\label{e-bd}
\frac{d}{dt}   E^{quasi}(t,U,V) \lesssim B(t) \| (U,V)[t]\|_{\mathcal H^0}^2  
+ \| (U,V)[t]\|_{\mathcal H^0} \| (F,G)[t]\|_{L^2}.
\end{equation}

\end{lemma}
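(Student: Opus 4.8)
## Proof strategy for Lemma~\ref{l:quasi-lin}

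The plan is to treat this as a standard energy estimate for a linear hyperbolic system, but with the quasilinear modification of the energy carefully engineered so that the worst (derivative-losing) terms in $\frac{d}{dt}E(U,V)$ are cancelled by the time derivative of the correction terms $\int B_1 + B_2$. The starting point for the well-posedness statement itself is that \eqref{WKG-lin0} is a \emph{linear} system in $(U,V)$ with coefficients depending on the (known) functions $(u,v)$; once we have the a priori estimate \eqref{e-bd} together with the equivalence \eqref{e-equiv}, well-posedness in $\mathcal H^0$ follows by the usual duality/approximation scheme (solve a regularized problem, pass to the limit), so the real content is the two displayed inequalities.

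For part (i), the energy equivalence, I would simply observe that each trilinear correction $B_i(w;U,V)$ from \eqref{corrections0} is, schematically, a product of one undifferentiated factor $\partial w$ (bounded by $A$), one first-order factor among $U,U_j,U_i$, and one first-order factor $\partial V, V_i$ — hence pointwise $|B_i(w;U,V)| \lesssim A\,(|\nabla U| + |\nabla V| + |V|)^2 \lesssim \delta\, e_0(t,x)$. Integrating in $x$ gives $|\int B_1 + B_2| \lesssim \delta\,\|(U,V)[t]\|_{\mathcal H^0}^2$, which is exactly \eqref{e-equiv}. Here I would use $A \le \delta \ll 1$ to guarantee the correction is a genuine perturbation of $e_0$.

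For part (ii), the energy estimate, the main computation is $\frac{d}{dt}E(U,V)$. Differentiating $\int e_0\,dx$, integrating by parts, and using the equations \eqref{WKG-lin0} produces: (a) terms where $(F,G)$ is paired with $(\partial_t U,\partial_t V)$, giving the harmless $\|(U,V)[t]\|_{\mathcal H^0}\|(F,G)[t]\|_{L^2}$ contribution; (b) terms where the null-form coupling $\mathbf N_1(v,\partial V) + \mathbf N_2(u,\partial V)$ etc. is paired with $\partial_t U$ (resp. $\partial_t V$). The problematic pieces in (b) are those containing \emph{second} derivatives of $(U,V)$ — e.g. a term like $\partial_t U \cdot \partial v\,\partial^2 V$ arising because $\mathbf N$ is quasilinear. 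These cannot be bounded by the energy without losing a derivative. The point of adding $\int B_1(v;U,V) + B_2(u;U,V)$ is that when we compute $\frac{d}{dt}\int B_i$, the term in which $\partial_t$ hits the \emph{first-order} factor of $B_i$ (producing $\partial^2$ of $U$ or $V$) is designed to exactly cancel the bad term from (b), up to commutators. What survives after the cancellation is: the term where $\partial_t$ falls on the coefficient $w = u$ or $v$ (producing $\partial^2 w$, i.e. a factor controlled by $B$, times two first-order factors of $(U,V)$, hence bounded by $B\,\|(U,V)[t]\|_{\mathcal H^0}^2$); commutator terms from the integrations by parts, which carry only one derivative on $(U,V)$ and a $\partial w$ or $\partial^2 w$ factor, hence again bounded by $(A + B)\,\|(U,V)[t]\|_{\mathcal H^0}^2$; and the genuinely null-structured parts of (b) which, \emph{after} the exact cancellation and one integration by parts exploiting the antisymmetry of $Q_{ij}, Q_{0i}, Q_0$, lose no derivative and are controlled by $A\,\|(U,V)[t]\|_{\mathcal H^0}^2 \lesssim \delta\,\|(U,V)[t]\|_{\mathcal H^0}^2$. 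Collecting everything and absorbing the $\delta$ terms yields \eqref{e-bd}.

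The main obstacle is the bookkeeping in step (ii): one must verify that the specific choice of corrections \eqref{corrections0} — in particular the precise pattern of which factor is differentiated and the sign structure inherited from the antisymmetry of the null forms — produces an \emph{exact} cancellation of the derivative-losing terms, rather than merely a partial one. This is where the null structure of $\mathbf N_1, \mathbf N_2$ is essential: without it the quadratic coupling $\partial w \,\partial^2(U,V)$ would not be in the range of $\frac{d}{dt}$ applied to a nice trilinear form, and no correction of the form \eqref{corrections0} would close the estimate. Once the cancellation is checked symbol-by-symbol for each of $Q_{ij}$, $Q_{0i}$, $Q_0$, the remaining estimates are routine Cauchy--Schwarz and Hölder in $x$, using only $A \le \delta$ and $B \in L^\infty$.
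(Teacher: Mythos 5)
Your overall strategy is the paper's: part (i) by the pointwise bound $|B_i(w;U,V)|\lesssim A\, e_0$, and part (ii) by writing $\partial_t e^{quasi}$ as a spatial divergence plus a source in which the second derivative has been transferred onto the coefficient $w\in\{u,v\}$, so that the source is bounded by $B(t)\,\|(U,V)[t]\|_{\mathcal H^0}^2$. Two points in your write-up need correction, and the second is a genuine gap relative to the stated estimate \eqref{e-bd}.

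First, you attribute the exact cancellation to the null structure of $\mathbf N_1,\mathbf N_2$ and claim that without it no correction of the form \eqref{corrections0} would close the estimate. That is not the mechanism. What makes the symmetrization work is the \emph{matched} coupling of the system: the same coefficient that multiplies $\partial^2 V$ in the $U$-equation multiplies $\partial^2 U$ in the $V$-equation, so that $U_t\,\mathbf N(w,\partial V)+V_t\,\mathbf N(w,\partial U)+\partial_t B(w;U,V)$ reorganizes into $\partial_x C+D(\partial^2 w,\partial U,\partial V)$ for \emph{any} quadratic form, null or not (try $\mathbf N(w,\partial V)=w_t\,\partial_t^2 V$ with correction $-w_t\,U_t V_t$: the leftover is exactly $-w_{tt}U_tV_t$). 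The paper is explicit that the structure of $C_i,D_i$ is unimportant for this lemma; the null structure of $D_2$ is only exploited later, where $\partial^2 u$ lacks $t^{-1}$ decay. Second, your accounting leaves residual terms of size $A\,\|(U,V)[t]\|_{\mathcal H^0}^2\lesssim\delta\,\|(U,V)[t]\|_{\mathcal H^0}^2$ which you propose to ``absorb.'' There is nothing to absorb them into: \eqref{e-bd} has only $B(t)$ in front of the energy, and $\delta$ is not dominated by $B(t)$, so your argument as written proves only $\frac{d}{dt}E^{quasi}\lesssim (B(t)+\delta)\|(U,V)[t]\|_{\mathcal H^0}^2+\dots$, which by Gronwall gives $e^{c\delta t}$ growth and destroys the continuation criterion of Theorem~\ref{t:local}(c). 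In the correct computation these terms do not occur: after extracting the perfect divergence $\partial_x C_i$ (whose integral vanishes), \emph{every} surviving term carries a factor $\partial^2 u$ or $\partial^2 v$ and hence is controlled by $B(t)$ alone.
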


\begin{proof} Part (i) is trivial. For clarity we prove (ii) in the homogeneous case i.e. when $F,G=0$, and leave the minor inhomogeneous adaptation to the reader. To see what is needed for the energy estimate computation we
begin by deriving the density flux relation associated to our energy
density $e^{quasi}$. We begin with the first component of $e^{quasi}$,
which is $e_0$:
\[
\partial_te_0(t,x)=\sum_{j=1}^2\partial_{x_j}\left( U_tU_j +V_tV_j\right) +U_t\Box U+V_t(\Box+1)V.
\]
The last two terms can be expanded as follows
\begin{equation}
\label{FG0}
\left\{
\begin{aligned}
&U_t\Box U= U_t\left( \mathbf{N_1}(v,\partial V) + \mathbf{N_2}(u, \partial V)\right) ,\\
&V_t(\Box+1)V= V_t\left( \mathbf{N_1}(v, \partial U)+\mathbf{N_2}(u, \partial U) \right).
\end{aligned}
\right.
\end{equation}

Next we turn our attention to the corrections
\begin{equation}
\label{dtB0}
 \partial_t B_i(w; U,V)=B_i(w_t; U, V)+ B_i(w; U_t, V)+ B_i(w; U, V_t),\qquad w=u,v,  \quad i=\overline{1,2}.
\end{equation}
Here we will combine the first, respectively the second, terms on both RHS in the above equations with $\partial_tB_i(v;U,V)$, respectively with $\partial_tB_i(u;U,V)$.
We obtain that
\[
\left\{
\begin{aligned}
& U_t \mathbf{N_1}(v, \partial V)+ V_t \mathbf{N_1}(v, \partial U) +  \partial_t B_1(v; U, V)=\partial_{x}C_1(\partial v, \partial U, \partial V)+ D_1(\partial^2 v, \partial U, \partial V),\\
&U_t \mathbf{N_2}(u, \partial V)+ V_t \mathbf{N_2}(u, \partial U) +  \partial_t B_2(u; U, V)=\partial_{x}C_2(\partial u, \partial U, \partial V)+ D_2(\partial^2 u, \partial U, \partial V),
\end{aligned}
\right.
\]
where $C_i$ and $D_i$ are trilinear forms. 
Their structure is unimportant here, 
but will be investigated later in the next section.

Summing up all these terms we obtain the following energy flux relation for
 solution to \eqref{WKG-lin0}:
\begin{equation}
\label{dteq0}
\partial_t e^{quasi}(t,x)=\sum_{j=1}^2\partial_j f_j+g,
\end{equation}
where the fluxes $f_j$ have schematically the expressions 
\begin{equation}
\label{fj0}
f_j=U_jU_t+ V_jV_t+ C_1(\partial v, \partial U, \partial V) +  C_2(\partial u, \partial U, \partial V),
\end{equation}
and the source $g$ has the form
\begin{equation}
\label{g0}
g= D_1(\partial^2 v, \partial U, \partial V)+ D_2(\partial^2 u, \partial U, \partial V).
\end{equation}
To complete the proof of the energy estimates we use the relation \eqref{dteq0}  to obtain
\[
\frac{d}{dt}\int_0^t E^{quasi}(t, U,V)=\int _0^t g\, dx,
\]
where it remains to bound the RHS. We bound the $\partial ^2u$ and $\partial^2 v$ factors in $g$ in $L^{\infty}$ by $B(t)$. The $\partial V$ and $\partial U$  factors are bounded by the energy. Then the conclusion of the Lemma follows.

\end{proof} 
 
 \bigskip
 
\textit{(ii) Local existence.} 
We construct a local solution to \eqref{WKG}-\eqref{data} using an iteration scheme. We set
\[
(u^{-1}, v^{-1})\equiv 0,
\]
and define $(u^m, v^m),\  m=0,1,\dots,$ inductively by
\begin{equation}\label{WKGm}
\left\{
\begin{aligned}
&(\partial^2_t  - \Delta_x) u^m(t,x) = \mathbf{N_1}(v^{m-1}, \partial v^m) +\mathbf{N_2}(u^{m-1}, \partial v^m)\,, \\
&(\partial^2_t  - \Delta_x  + 1)v^m(t,x) = \mathbf{N_1}(v^{m-1}, \partial u^m)+ \mathbf{N_2}(u^{m-1}, \partial u^m)\,,
\end{aligned} 
\right. \ \  (t,x)\in \left[0,+\infty \right) \times \mathbb{R}^2,
\end{equation}
with
\begin{equation} \label{data_m}
\left\{
\begin{aligned}
& (u^m,v^m)(0,x) =  (u_0(x),v_0(x))\,,\\
& (\partial_t u^m,\partial_t v^m)(0,x) =  (u_1(x), v_1(x)).
\end{aligned}
\right.
\end{equation}
We assume the data to be in $\mathcal{S}$ so that, by the local existence theorem for linear equations, the above system admits a $\mathcal{C}^\infty$ solution for every $m$. We can later remove this assumption by an approximation argument.

To set the notations we assume that at the initial time $t=0$ we have the Lipschitz bound 
\begin{equation}
    \label{a0}
A(0):=\sum_{\vert \alpha \vert =1}\Vert \partial^{\alpha} u(0)\Vert_{L^{\infty}} +\sum_{\vert \alpha \vert =1}\Vert \partial^{\alpha}v(0)\Vert_{L^{\infty}} \le \delta \ll 1,
\end{equation}
as well as the Sobolev bound
\begin{equation}
\label{v0u0}
\|(u, v)[0]\|_{\H^n}\le M.
\end{equation}

Then we claim that there exists a time $T_0$ sufficiently small depending on $\delta$ and $M$ such that the following bounds for the sequence $ (u^m, v^m)$ hold for all $t\in [0, T_0]$:
\begin{equation}
    \label{am}
A^{m}(t):=\sum_{\vert \alpha \vert =1}\Vert \partial^{\alpha} u^{m}(t)\Vert_{L^{\infty}} +\sum_{\vert \alpha \vert =1}\Vert \partial^{\alpha}v^{m}(t)\Vert_{L^{\infty}} \leq 2\delta, 
\end{equation}
as well as the  uniform energy bounds
\begin{equation} \label{bound u_m v_m}
\|(u^m(t), v^m(t))\|_{\H^n}\le 2 M \qquad \forall \ 0\le t\le T_0,\ \forall m\ge 0.
\end{equation}
When $m=0$, the functions $(u^0, v^0)$ solve the linear system \eqref{WKG-lin}, for which the energy is conserved. Bound \eqref{bound u_m v_m} is hence trivially satisfied. As for $A^0$, we use Sobolev embeddings
 \[
A^0( t) \leq A^0(0) + \int_{0}^t \|(\partial u^0_t,\partial v^0_t)(s)\|_{L^\infty} \, ds
\leq \delta + CMT_0,
\]
where $C$ is some positive universal constant. Then we can choose and $T_0$ small enough (e.g. $T_0<\delta/(CM)$) to obtain \eqref{am}.

Let us now suppose that our claim holds true for $m-1$, $m\ge 1$, and prove it for index $m$.

To measure $\|(u^m(t), v^m(t))\|_{\H^n}^2$ we will use the energies
\begin{equation}\label{E_m}
E^{quasi, n}(t, u^m, v^m)= \sum_{|\alpha| \leq n} E^{quasi}_{m-1}(t, D^\alpha_x u^m, D^\alpha_x v^m),
\end{equation}
where $E^{quasi}_{m-1}$ is obtained from $E^{quasi}$ by substituting 
$(u,v)$ with $(u^{m-1},v^{m-1})$. Thanks to the smallness assumption on $A^{m-1}$, 
the bound \eqref{e-equiv} holds for $E^{quasi}_{m-1}$,
so we will harmlessly replace $ \|(u^m, v^m)[t]\|_{\H^n}^2$ by 
$E^{quasi,n}(t, u^m, v^m)$ in \eqref{v0u0} and \eqref{bound u_m v_m}.

We start by differentiating the system \eqref{WKGm}.
For any $0 \leq k\le n$, the differentiated variables $(\partial^k_x u^m, \partial^k_x v^m)$ solve the following system:
\[
\left\{
\begin{aligned}
&(\partial^2_t  - \Delta_x) \partial^k_x u^m(t,x) = \mathbf{N_1}(v^{m-1}, \partial \partial^k_x v^m) + \mathbf{N_2}(u^{m-1}, \partial \partial^k_x v^m) +\mathbf F_k  \\
&(\partial^2_t  - \Delta_x  + 1) \partial^k_x v^m(t,x) = \mathbf{N_1}(v^{m-1}, \partial \partial^k_x u^m)+\mathbf{N_2}(u^{m-1}, \partial \partial^k_x u^m)
+\mathbf G_k\,,
\end{aligned} 
\right.
\]
where $\mathbf{F}_k$ and $\mathbf{G}_k$ are given by
\[
\left\{
\begin{aligned}
&\mathbf{F}_k:= \sum_{\substack{i+j=k\\ j<k}} \mathbf{N_1}(\partial^i_x v^{m-1}, \partial \partial^j_x v^m) + \sum_{\substack{i+j=k\\ j<k}} \mathbf{N_2}(\partial^i_x u^{m-1}, \partial \partial^j_x v^m)\\
& \mathbf{G}_k:=\sum_{\substack{i+j=k\\ j<k}} \mathbf{N_1}(\partial^i_x v^{m-1}, \partial \partial^j_x u^m)+ \sum_{\substack{i+j=k\\ j<k}} \mathbf{N_2}(\partial^i_x u^{m-1}, \partial \partial^j_x u^m) .
\end{aligned}
\right.
\]
We seek to apply Lemma~\ref{l:quasi-lin} for this system. By Sobolev embeddings 
we control 
\[
B^{m-1}:= B(u^{m-1},v^{m-1}) \lesssim M ,
\]
therefore by \eqref{e-bd} we have
\begin{equation} \label{ee-k}
\frac{d}{dt}E^{quasi}_{m-1}(\partial^k_x u^m, \partial^k_x v^m)
\lesssim M \| (\partial^k_x u^m, \partial^k_x v^m)\|_{\H^0}^2
+ \| (\partial^k_x u^m, \partial^k_x v^m)\|_{\H^0} 
\|(\mathbf F_k, \mathbf G_k)\|_{L^2}
\end{equation}
We claim that $(\mathbf F_k, \mathbf G_k)$ can be estimated as follows:
\begin{equation}\label{fg-k}
\|(\mathbf F_k, \mathbf G_k)\|_{L^2} \lesssim M \|(u^m,v^m)\|_{\H^n}, \qquad 
k \leq n.
\end{equation}
Indeed, using H\"older inequality and the Gagliardo-Niremberg interpolation inequality we see that
\begin{equation}\label{GN}
\begin{aligned}
\| \mathbf{N}(\partial^i_x v^{m-1}, \partial \partial^j_x v^m)\|_{L^2} &\le \|\partial \partial^i_x v^{m-1}\|_{L^{\frac{2(k-1)}{i-1}}} \|\partial \partial^{j+1}_x v^m \|_{L^{\frac{2(k-1)}{j}}} \\
& \le  \|\partial \partial^k_x v^{m-1}\|^{\alpha}_{L^2}\|\partial^2 v^{m-1}\|^{1-\alpha}_{L^\infty}   \|\partial \partial^{k}_x v^{m}\|^{\beta}_{L^2}\|\partial^2 v^{m}\|^{1-\beta}_{L^\infty}
\end{aligned}
\end{equation}
with $\alpha=\frac{i-1}{k-1}, \beta=\frac{j}{k-1}$,
and by Sobolev embeddings
\[
\begin{aligned}
& \| \mathbf{N}(\partial^i_x v^{m-1}, \partial \partial^j_x v^m)\|_{L^2}\\
& \hspace{10pt} \le \|(u^{m-1}, v^{m-1})(\tau)\|^\alpha_{\H^k}  \|(u^{m-1}, v^{m-1})(\tau)\|^{1-\alpha}_{\H^3}  \|(u^{m}, v^{m})(\tau)\|^\beta_{\H^k} \|(u^{m}, v^{m})(\tau)\|^{1-\beta}_{\H^3}\\
& \hspace{10pt} \le \|(u^{m-1}, v^{m-1})(\tau)\|_{\H^n}  \|(u^{m}, v^{m})(\tau)\|_{\H^n}.
\end{aligned}
\]
This estimate applies for $\mathbf{N}=\mathbf{N_1}$, and also for $\mathbf{N}=\mathbf{N_2}$ where $v$ can be freely replaced by $u$. This proves \eqref{fg-k}.
We substitute \eqref{fg-k} in \eqref{ee-kc} and sum over $k \leq n$. 
Then we obtain the energy relation
\[
\frac{d}{dt} E^{quasi,n}(u^m,v^m) \lesssim M \| (u^m,v^m)\|_{\H^n}^2
\approx M E^{quasi,n}(u^m,v^m),
\]
and by Gronwall's lemma
\[
E^{quasi,n}(t, u^m, v^m)\le e^{CMt} E^{quasi}(0, u^m, v^m)  \qquad \forall \ 0\le t\le T_0
\]
for some positive constant $C$. Then we can choose and $T_0$ small enough (e.g. $T_0<1/(2CM)$) to obtain \eqref{bound u_m v_m}. 

On the other hand, the uniform bound of $(\partial u^m,\partial v^m)$ is proved as for the case $m=0$ using Sobolev embeddings,
 \[
A^m( t) \leq A^m(0) + \int_{0}^t \|(\partial u^m_t,\partial v^m_t)(s)\|_{L^\infty} \, ds
\leq \delta + 2CMT_0,
\]
which proves that $A^m(t) \leq 2\delta$ if $T_0$ is chosen small enough.

The  remaining step is to prove the convergence of the sequence of approximate solutions $(u^m, v^m)$ as $m\rightarrow\infty$. As the problem is quasilinear, the convergence can only be shown in  a weaker topology. It is enough for our goal to show that $(u^m-u^{m-1}, v^m-v^{m-1})$ is a Cauchy sequence in $C^0([0,T_0]; \H^0)$. The limit $(u,v)$ will hence automatically belong to $\mathcal{H}^n$, satisfy \eqref{WKG} together with the uniform in time bound
\[
\|(u,v)(t)\|_{\H^n}\le 2M \quad \forall 0\le t\le T_0.
\]
From \eqref{WKGm} we see that the differences 
$(\tu^m,\tv^m) = (u^m-u^{m-1}, v^{m}-v^{m-1})$ solve the following Cauchy problem
\[
\left\{
\begin{aligned}
&\Box \tu^m(t,x) = \mathbf{N_1}(v^{m-1}, \partial \tv^{m}) +
\mathbf{N_2}(v^{m-1}, \partial \tv^{m})
+ \mathbf{N_1}(\tv^{m-1}, \partial v^{m}) + \mathbf{N_2}(\tv^{m-1}, \partial v^{m}) \,, \\
&(\Box  + 1)\tv^m(t,x) = 
\mathbf{N_1}(v^{m-1}, \partial \tu^{m}) +
\mathbf{N_2}(v^{m-1}, \partial \tu^{m})
+ \mathbf{N_1}(\tu^{m-1}, \partial v^{m}) + \mathbf{N_2}(\tu^{m-1}, \partial v^{m})
\end{aligned} 
\right.
\]
with initial data $(\tu^m,\tv^m)[0] = 0$.

We now view the last two terms in each equation as source terms, and apply 
Lemma~\ref{l:quasi-lin} to obtain
\[
\frac{d}{dt} E^{quasi}_{m-1} (\tu^m,\tv^m) \lesssim 
M (\|  (\tu^m,\tv^m)\|_{\H^0}^2 + \|  (\tu^m,\tv^m)\|_{\H^0} \|  (\tu^{m-1},\tv^{m-1})\|_{\H^0}).
\]
Then by Gronwall's inequality we obtain
\begin{align*}
&\|(\tu^m, \tv^m)(t)\|_{\H^0}  \le CM e^{CMT_0}\int_0^t  \|(\tu^{m-1}, \tv^{m-1})(\tau)\|_{\H^0}\, d\tau
\end{align*}
for all $0\le t \le T_0$. 
By iteration
\[
\begin{aligned}
\|(\tu^m , \tv^m)(t)\|_{\H^0} &\lesssim (CM)^m e^{mCMT_0} \int_{0\le \tau_1\le \tau_2 \le \dots\le \tau_m\le t}  \|(u^0, v^0)(\tau_1)\|_{\H^0}\, d\tau_1 \dots d\tau_m \\
& \le \frac{(CMt)^m}{m!}e^{mCMT_0}\sup_{t\in[0,T_0]}\|(u^0, v^0)(t)\|_{\H^0},
\end{aligned}
\]
which implies that the series of general term  $(u^m, v^m)$ converges in $\mathcal{C}^0([0,T_0]; \H^0)$ and concludes the proof of the existence part $(a)$ of the theorem.

\bigskip
\textit{(iii) Uniqueness of solutions.} This follows by the  same arguments as above. We assume we have two solutions of \eqref{WKG}, $(u^1,v^1)$, $(u^2,v^2)$, we subtract them, and obtain a similar system as above for the difference $(\tu, \tv):=(u^1,v^1)-(u^2,v^2)$ with zero Cauchy data. Then we apply the energy estimates in Lemma~\ref{l:quasi-lin} followed by Gronwall's inequality to show $(\tu,\tv)=0$.
For more details, see the proof in (iv) below which yields a stronger result.

\bigskip

\textit{(iv) Uniform finite speed of propagation.} 
Here we consider two solutions of \eqref{WKG}, $(u^1,v^1)$ and  $(u^2,v^2)$.
We assume that their initial data coincide in a ball $B(x_0,R)$, and show that the two solutions  have to agree in the cone
\[
C = \{ 2t + |x-x_0| < R  \}.
\]
For the difference $(\tu,\tv)$ we have the equation
\[
\left\{
\begin{aligned}
&\Box \tu(t,x) = \mathbf{N_1}(v^{1}, \partial \tv) +
\mathbf{N_2}(u^{1}, \partial \tv)
+ \mathbf{N_1}(\tv, \partial v^{2}) + \mathbf{N_2}(\tu, \partial v^2) \,, \\
&(\Box  + 1)\tv(t,x) = 
\mathbf{N_1}(v^1, \partial \tu) +
\mathbf{N_2}(u^1, \partial \tu)
+ \mathbf{N_1}(\tv, \partial u^2) + \mathbf{N_2}(\tu, \partial u^{2}).
\end{aligned} 
\right.
\]
We view the last two terms on the right as source terms and the rest as 
the equation \eqref{WKG-lin0} with $(u,v) = (u^1,v^1)$ and $(U,V)= (\tu,\tv)$.
Then the energy flux relation \eqref{dteq0} remains valid, with the contribution of the source terms included in $D_1$ and $D_2$ in \eqref{g0}.

We integrate the energy flux relation  \eqref{dteq0} on the cone section
$C_{[0,t_0]}=C\cap [0,t_0]$ to obtain 
\[
\int_{C_{t_0}} e^{quasi}\, dx = \int_{C_0} e^{quasi}\, dx + \int_{C_{[0,t_0]}} g\, dx dt
+ F
\]
where the flux $F$ is an integral over the lateral surface of the cone section
which we denote by $\partial C_{[0,t]}$,
\[
F = \int_{\partial C_{[0,t_0]}} -  e^{quasi} + \frac12 \frac{(x-x_0)_j}{|x-x_0|} f_j \, dx.
\]
Since $A \ll 1$, it easily follows that the contribution of the cubic terms to $F$
is negligible and then that $F \leq 0$. Then 
\[
\int_{C_{t_0}} e^{quasi} \, dx \leq  \int_{C_0} e^{quasi} \, dx + B \int_{ C_{[0,t_0]}}
e^{quasi} \, dx dt.
\]
At the initial time $t = 0$ we have $(\tu,\tv) = 0$ so by Gronwall's inequality we obtain 
$e^{quasi} = 0$ inside $C$, which gives $(\tu,\tv) = 0 $ in $C$.

\bigskip

\bigskip

\textit{(v) Continuation of the solution}. We start by differentiating the system \eqref{WKGm}.
For any $0 \leq k\le n$, the differentiated variables $(\partial^k_x u, \partial^k_x v)$ solve the following system:
\[
\left\{
\begin{aligned}
&(\partial^2_t  - \Delta_x) \partial^k_x u(t,x) = \mathbf{N_1}(v, \partial \partial^k_x v) + \mathbf{N_2}(u, \partial \partial^k_x v) +\mathbf F_k  \\
&(\partial^2_t  - \Delta_x  + 1) \partial^k_x v(t,x) = \mathbf{N_1}(v, \partial \partial^k_x u)+\mathbf{N_2}(u, \partial \partial^k_x u)
+\mathbf G_k\,,
\end{aligned} 
\right.
\]
where $\mathbf{F}_k$ and $\mathbf{G}_k$ are given by
\[
\left\{
\begin{aligned}
&\mathbf{F}_k:= \sum_{\substack{i+j=k\\ j<k}} \mathbf{N_1}(\partial^i_x v, \partial \partial^j_x v) + \sum_{\substack{i+j=k\\ j<k}} \mathbf{N_2}(\partial^i_x u, \partial \partial^j_x v)\\
& \mathbf{G}_k:=\sum_{\substack{i+j=k\\ j<k}} \mathbf{N_1}(\partial^i_x v, \partial \partial^j_x u)+ \sum_{\substack{i+j=k\\ j<k}} \mathbf{N_2}(\partial^i_x u, \partial \partial^j_x u) . 
\end{aligned}
\right.
\]
We seek to apply Lemma~\ref{l:quasi-lin} for this system. By \eqref{e-bd} we have
\begin{equation} \label{ee-kc}
\frac{d}{dt}E^{quasi}(\partial^k_x u, \partial^k_x v)
\lesssim B \| (\partial^k_x u, \partial^k_x v)\|_{\H^0}^2
+ \| (\partial^k_x u, \partial^k_x v)\|_{\H^0} 
\|(\mathbf F_k, \mathbf G_k)\|_{L^2}.
\end{equation}
It suffices to show  that $(\mathbf F_k, \mathbf G_k)$ can be estimated as follows:
\begin{equation}\label{fg-kc}
\|(\mathbf F_k, \mathbf G_k)\|_{L^2} \lesssim B \|(\partial^k u,\partial^k v)\|_{\H^0}, \qquad 
k \leq n.
\end{equation}
Indeed, using H\"older inequality and the Gagliardo-Niremberg interpolation inequality we see that
\begin{equation}\label{GN+}
\begin{aligned}
\| \mathbf{N}(\partial^i_x v, \partial \partial^j_x v)\|_{L^2} &\le \|\partial \partial^i_x v\|_{L^{\frac{2(k-1)}{i-1}}} \|\partial \partial^{j+1}_x v \|_{L^{\frac{2(k-1)}{j}}} \\
& \le  \|\partial \partial^k_x v\|^{\alpha}_{L^2}\|\partial^2 v\|^{1-\alpha}_{L^\infty}   \|\partial \partial^{k}_x v\|^{\beta}_{L^2}\|\partial^2 v\|^{1-\beta}_{L^\infty}
\end{aligned}
\end{equation}
with $\alpha=\frac{i-1}{k-1}, \beta=\frac{j}{k-1}$.
Since $\alpha +\beta =1$
\[
\begin{aligned}
& \| \mathbf{N}(\partial^i_x v, \partial \partial^j_x v)\|_{L^2}  \le \| (\partial^2 u, \partial^2v )\|_{L^{\infty}}  \|(\partial^ku, \partial^kv)\|_{\H^0}.
\end{aligned}
\]
This estimate applies for $\mathbf{N}=\mathbf{N_1}$, and also for $\mathbf{N}=\mathbf{N_2}$ where $v$ can be freely replaced by $u$. This proves \eqref{fg-kc}.

We substitute to obtain the energy relation
\[
\frac{d}{dt} E^{quasi}(\partial^k_xu,\partial^k_xv) \lesssim B \| (\partial^k u,\partial^kv)\|^2_{\H^0}
\approx B E^{quasi}(\partial^k_x u,\partial^k_xv),
\]
and by Gronwall's lemma
\[
E^{quasi}(t, \partial^k_xu, \partial^k_x v)\le e^{CBt} E^{quasi}(0, \partial^k_xu, \partial^k_x v)  \qquad \forall \ 0\le t\le T
\]
for some positive constant $C$. 

\end{proof}

\begin{proof}[Proof of Proposition~\ref{p:out}]

The exterior region corresponds to $t \leq \frac{1+\vert x\vert }{4}$.  Because of the finite
speed of propagation property, this region can be treated separately as long as
$\partial u$ and $\partial v$ remain small. Precisely, fix a dyadic $R > 1$
and consider the solution $(u,v)$ to \eqref{WKG} in the region 
\[
C_R^{out} = \left\{ R < |x| < 2R, \ 0 \leq t < \frac{1+|x|}{4} \right\}.
\]
By the finite speed of propagation previously proved,
the solution in this region is uniquely determined by the data 
in 
\[
A_R = \{ R/2 \leq |x| \leq 4R \}.
\]
In the region $A_R$ our hypothesis guarantees that 
we have the bounds
\begin{equation}\label{R-est}
    \| (u,v)[0]\|_{\H^{2h}} \lesssim \epsilon, \qquad \| (\partial^2 u,\partial^2 v)[0]\|_{\H^0} \lesssim \epsilon R^{-2}.
\end{equation}

We first restrict the data to $A_R$ and then extend them to all $\R^2$ 
so that \eqref{R-est} still holds. To obtain a bound in $C_R^{out}$ it suffices
to solve the equation up to time $T_R = R/2$. A local in time solution exists.
For this solution we make the bootstrap assumption 
\begin{equation}
\| \partial(u,v)\|_{L^\infty} \leq \sqrt \epsilon, \qquad \|  \partial^2(u,v)\|_{L^\infty} \leq  R^{-1} \sqrt \epsilon
\end{equation}
in a time interval $[0,T]$ with $T \leq T_R$. Applying the energy estimates in 
Theorem~\ref{t:local}, we can propagate the energy bounds in \eqref{R-est} 
up to time $T$. Then we can use Sobolev embeddings to get pointwise bounds from 
the energy bounds. 

For the first derivatives this yields
\[
\|\partial u\|_{L^\infty} \lesssim \|\partial u\|_{L^2}^\frac12
\|\partial^3 u\|_{L^2}^\frac12 \lesssim \epsilon R^{-1},
\]
and similarly for $v$. This suffices in order to improve the bootstrap assumption.

For the second derivatives this yields
\[
\|\partial^2 u\|_{L^\infty} \lesssim  \log R \|\partial^3 u\|_{L^2} + R^{-2} \|\partial^2 u\|_{H^2} \lesssim \epsilon R^{-2}\log R,
\]
and similarly for $v$. This again suffices in order to improve the bootstrap assumption.

As the bootstrap assumption can be improved for all $T < T_R$, it follows that the solution $(u,v)$ extends to time $T_R$ and satisfies the above pointwise bounds. 
The proof of the proposition is concluded.

\end{proof}
\section{Linearized equation: Alinhac's approach}
\label{s:linearization}
In this section we derive the linearized wave-Klein-Gordon system and
prove the energy estimates for them. More precisely we prove quadratic energy estimates in $\mathcal{H} ^0$, which apply
to large data problem. 

The solutions for the linearized wave-Klein-Gordon system around a
solution $(u,v)$ are denoted by $(U, V)$. With this notation in place
the linearized system takes the form
\begin{equation}
\label{general linearization}
\left\{
\begin{aligned}
&(\partial_t^2-\Delta_{x})U(t,x)= \mathbf{N_1}(v, \partial V) +\mathbf{N_1}(V,\partial v)  + \mathbf{N_2}(u, \partial V) +\mathbf{N_2}(U,\partial v) \\
&(\partial_t^2-\Delta_{x}+1)V(t,x)=\mathbf{N_1}(v, \partial U)+\mathbf{N_1}(V, \partial u) + \mathbf{N_2}(u, \partial U)+\mathbf{N_2}(U, \partial u)  .\\
\end{aligned}
\right.
\end{equation}
We recall that for $\mathbf{N_1}(\cdot, \cdot)$  and $\mathbf{N_2}(\cdot, \cdot)$ we will take linear combinations of the classical admissible quadratic null forms 
\begin{equation}
\label{null forms-re}
\left\{
\begin{aligned}
&Q_{ij}(\phi, \psi) =\partial_i  \phi \partial_j \psi -\partial_i\psi  \partial_j \phi, \\
&Q_{0i}(\phi, \psi) =\partial_t  \phi \partial_i \psi -\partial_t\psi  \partial_i \phi, \\
&Q_{0}(\phi, \psi) =\partial_t \phi   \partial_t \psi -\nabla_x \psi \cdot \nabla_x   \phi.
\end{aligned}
\right.
\end{equation}

To obtain energy estimates for the linearized equation we consider the same energy and energy density as in the proof of the local well-posedness result,
\begin{equation}
\label{Equasidef-re}
E^{quasi}(U,V):= E\left(U,V\right)+ \int_{\mathbb{R}^2}B_1(v; U, V) + B_2(u; U, V)\, dx=\int_{\mathbb{R}^2} e^{quasi}(t,x) \, dx ,
\end{equation}
where the quasilinear energy density is
\[
e^{quasi}(t,x):=e_0(t,x)+B_1(v; U, V) +  B_2(u; U, V),
\]
and  $e_0$ is the linear energy density
\[
e_0(t,x)=\frac{1}{2}[U_t ^2 + U_x^2 +V_t^2 + V_x^2 +V^2 ].
\]

\smallskip

Our energy estimates will be proven under the following uniform bound
assumptions (which are a part of our bootstrap argument) 
\begin{equation} \label{boot1.1}
|Zu|\le C\epsilon \langle t-r\rangle^{\delta} 
\end{equation}
\begin{equation}
\label{boot1}
|Z \partial u |\leq C \epsilon\langle t-r\rangle^{-\delta_1} 
\end{equation}
\begin{equation}
\label{boot1.2}
| Z\partial^2u|\le C\epsilon \langle t-r\rangle^{-\delta_1}
\end{equation}
\begin{equation}\label{boot2.1}
\vert \partial u\vert \leq C\epsilon \langle t+r \rangle^{-\frac{1}{2}}\langle t-r\rangle^{-\frac{1}{2}}
\end{equation}
\begin{equation}
\label{boot2}
\vert \partial^{j} u\vert  \leq C\epsilon \langle t+r \rangle^{-\frac{1}{2}} \langle t-r \rangle ^{-\frac{1}{2}-\delta_1}, \quad j=\overline{2,3},
\end{equation}
\begin{equation}
\label{boot3}
\vert \partial^j v\vert  \leq C\epsilon \langle t+r \rangle^{-1-\delta_1} \langle t-r \rangle^{\delta_1}, \quad j=\overline{1,3},
\end{equation}
where $C$ is a large positive constant and $0<\delta \ll \delta_1$. 

We observe that under the above assumptions, and for $\epsilon$ sufficiently small, we have 
\[
E^{quasi} (U, V) \approx  E(U,V)
\]
in the sense that
\[
\frac{1}{2} E^{quasi} (U,V) \leq E(U,V)\leq 2E^{quasi} (U,V).
\]

Our main result for the linearized equation  is as follows:

\begin{proposition} \label{p:linear} Assume the solutions to the main
  equations \eqref{WKG} or \eqref{WKG-cut} satisfy the bounds \eqref{boot1.1}-\eqref{boot3} in some time interval $[0, T]$. Then
  the linearized equation \eqref{general linearization}, subject to
  the constraints in \eqref{null forms-re}, is well-posed in $[0, T]$ and
  the solution satisfies
\begin{equation}
\label{est1}
E^{quasi} (U, V)(t)\lesssim t^{\tilde{C}\epsilon}E^{quasi}(U,V)(0), \quad t\in [0,T],
\end{equation} 
where $\tilde{C} \  \approx C$ is a positive constant.
\end{proposition}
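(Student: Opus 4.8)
The plan is to run a standard energy argument for the linearized system \eqref{general linearization}, but organized around the space-time dyadic regions $C^{\pm}_{TS}$ so that the null structure of the nonlinearities can be exploited at the level of the flux and source terms. The starting point is the density-flux relation \eqref{dteq0}, which was derived in the proof of Lemma~\ref{l:quasi-lin} and which remains valid here: writing $e^{quasi}$ as in \eqref{Equasidef-re}, we have $\partial_t e^{quasi} = \sum_j \partial_j f_j + g$, where $f_j$ is as in \eqref{fj0} and $g = D_1(\partial^2 v,\partial U,\partial V) + D_2(\partial^2 u,\partial U,\partial V)$ as in \eqref{g0}. The crucial point, which distinguishes this from the brute-force bound in Lemma~\ref{l:quasi-lin} (where $g$ was simply controlled by $B \cdot (\text{energy})$ and $B$ is not integrable in time in the long-time regime), is that the trilinear forms $C_i$, $D_i$ inherit the null structure of $\mathbf N_1$, $\mathbf N_2$. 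Hence the "bad" coefficients $\partial^2 u$, $\partial^2 v$ in $g$ appear only contracted against good (tangential) derivatives of $U$, $V$ near the cone, or multiplied by an extra power of $\langle t-r\rangle^{-1}$ relative to the naive bound, and the $v$-coefficient factors $\partial v$, $\partial^2 v$ enjoy the full $\langle t+r\rangle^{-1}$ Klein-Gordon decay from \eqref{boot3}.

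The key steps, in order, would be: (i) record the density-flux identity and integrate it over the slab $[0,t]$, keeping the flux $f_j$ integrated over the lateral boundaries — on the exterior piece $r \gg t$ this is handled by finite speed of propagation via Proposition~\ref{p:out-boot}, so one may assume the analysis is confined to $r \lesssim t$; (ii) for each dyadic time ring $C_T$, further split into the $C^{\pm}_{TS}$ (and the shell $C_{T1}$), and in each region estimate $\int_{C_{TS}} |g|\, dx\, dt$; (iii) in the wave-self-interaction part of $g$ (the $\mathbf N_2(u,\cdot)$-type terms with both slots being wave), use the null form identity to trade one $\partial$ on $u$ for a good derivative, gaining a factor $\langle t-r\rangle/\langle t+r\rangle \sim S/T$, and combine with the pointwise bounds \eqref{boot2} to produce an integrable-in-$T$ gain; in the mixed and $v$-involving parts, simply use \eqref{boot3} to get a clean $\langle t+r\rangle^{-1} \sim T^{-1}$ factor, whose time integral is $\log t \lesssim t^{\tilde C\epsilon}$ after summing the dyadic pieces; (iv) sum over $S$ and then over dyadic $T \le t$, collecting all contributions into a Gronwall-type inequality $E^{quasi}(t) \lesssim E^{quasi}(0) + \epsilon \int_0^t \frac{\tilde C}{\langle s\rangle} E^{quasi}(s)\, ds$, which integrates to \eqref{est1}; (v) finally invoke the energy equivalence $E^{quasi} \approx E \approx \|(U,V)[t]\|^2_{\mathcal H^0}$, valid since $A \ll 1$ under \eqref{boot2.1}, to conclude well-posedness on $[0,T]$ and the stated bound.

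The main obstacle I expect is step (iii): extracting the null-structure gain from the quasilinear (highest-order) part of $g$. Unlike the semilinear case, here the dangerous factor carries two derivatives, $\partial^2 u$, and one is working with a non-diagonalizable system, so one has to be careful that the correction terms $B_1$, $B_2$ in $e^{quasi}$ really do conspire with $D_1$, $D_2$ so that only the null-form combinations survive — that is, that the would-be worst term $\partial_t^2 u \cdot \partial U \cdot \partial U$ (with the non-tangential second derivative) is absent or is itself a null contraction. This is precisely the point where the precise algebraic form of the corrections \eqref{corrections0}, together with the classification of all admissible null forms in \eqref{null forms-re}, must be used, and where the analysis genuinely differs from the three-dimensional literature; one also has to track the interior/exterior distinction carefully since the sign and size of $t-r$ governs which of the "good" versus "bad" null-frame components one gains. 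A secondary technical point is the bookkeeping of the $S$-summation: one must make sure the gains beat the number $\log T$ of dyadic $S$-scales so that no spurious logarithms accumulate beyond what is already absorbed into $t^{\tilde C\epsilon}$.
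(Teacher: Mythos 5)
Your overall architecture (density--flux identity for $e^{quasi}$, dyadic decomposition into $C^{\pm}_{TS}$, null structure of $D_1,D_2$, exterior region via finite speed, Gronwall over dyadic times) matches the paper's, and your identification of the algebraic point about $B_i,C_i,D_i$ is correct — the paper verifies \eqref{D-null} by direct computation. But there is a genuine gap in steps (iii)--(iv): you propose to close the argument using only the pointwise bootstrap bounds on $u,v$ together with \emph{fixed-time} energies of $(U,V)$, and this cannot work for the worst term in $D_2$. When you expand the null form via \eqref{rewrite2}, $\mathbf N(\partial u,V)\,\partial U = \partial^2 u\cdot\Tau V\cdot\partial U + \Tau\partial u\cdot\partial V\cdot\partial U + \frac{t-r}{t}\partial^2u\cdot\partial V\cdot\partial U$, the tangential derivative in the first term necessarily falls on the \emph{linearized} variable $V$, not on the coefficient $u$. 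There is no pointwise bound available for $\Tau V$, and at fixed time the energy controls $\Tau V$ no better than the full gradient; so the best you can do with your ingredients is $\int_{C_{TS}}|\partial^2 u|\,|\partial U|\,|\Tau V|\lesssim \epsilon\, T^{-\frac12}S^{-\frac12-\delta}\cdot T\cdot\sup_t E^{quasi}\sim \epsilon\, T^{\frac12}S^{-\frac12-\delta}\sup_t E^{quasi}$, which is far larger than $\epsilon\sup_t E^{quasi}$ when $S\ll T$. Your Gronwall inequality in step (iv) therefore does not close.

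The missing idea is Alinhac's ghost weight: one first proves a \emph{space-time} local energy estimate (Lemmas~\ref{l:alih}, \ref{l:equiv lemma}) by replacing $E^{quasi}$ with $E^{quasi}_a=\int e^a e^{quasi}$, $a=-A(t-r)$ with $A'\approx S^{-1}$ on $|t-r|\approx S$; the term $-\int e^a A'(t-r)Q_2$ produced by differentiating the weight is coercive and yields $\sup_S\|S^{-1/2}\Tau(U,V)\|_{L^2(C_{TS})}^2\lesssim \sup_t E^{quasi}$. Only with this bound in hand does the dangerous term close, via Cauchy--Schwarz in space-time: $\|\partial^2u\|_{L^\infty(C_{TS})}\|\partial U\|_{L^2(C_{TS})}\|\Tau V\|_{L^2(C_{TS})}\lesssim \epsilon S^{-\delta}(\sup_tE^{quasi})^{1/2}\|S^{-1/2}\Tau V\|_{L^2(C_{TS})}$, after which the tangential factor is absorbed into the left-hand side of the weighted estimate. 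The proposition itself is then obtained by rerunning the computation with $a=0$ and reusing the localized bounds. So the ghost-weight step is not an optional refinement but the mechanism that makes the energy estimate close at all; your proposal, which nowhere explains how $\Tau V$ is controlled better than $\nabla V$, does not reach it.
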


Along the way we will establish a larger family of bounds for $U,V$. These are collected together 
at the end of the section in Corollaries~\ref{c:linear}, \ref{c:nonhlinear}, which can be viewed as a stronger form of 
the above proposition.

\begin{proof}
  The difficulty we encounter here is that we do not have a good
  estimate at a fixed time for the time derivative of the energy in
  order to directly apply a Gronwall's type inequality. To address this
  issues, the key idea is to obtain a ``good'' energy inequality. We
  are led to consider the energy growth on dyadic time scales $[T,
  2T]$. Within such a dyadic time interval it suffices to prove
\begin{equation}
\label{quasi}
\sup_{t\in [T, 2T]}E^{quasi} (U,V)(t) \le (1+\epsilon C)E^{quasi} (U, V)(T).
\end{equation}
As a preliminary step, we determine the growth of the $E^{quasi}(U,V)(t)$ on such time interval:
\begin{equation}
\label{quasi1}
E^{quasi}(U,V)(\tilde{T})-E^{quasi}(U,V)(T)=\int_T^{\tilde{T}}\frac{d}{dt}E^{quasi}(U,V)(t)\, dt \qquad \tilde{T}\in [T, 2T],
\end{equation}
where, after expanding, the RHS is a trilinear form integrated in
space time, rather than at fixed time. To estimate this integral,
i.e. to get
\begin{equation}
\label{quasi2}
\left| \int_T^{\tilde{T}}\frac{d}{dt}E^{quasi}(U,V)(t)\, dt\right| \lesssim \epsilon E^{quasi}(U,V) (T),
\end{equation}
we will first need to obtain the $L^2$ space-time bounds for $U$ and
$V$ and their derivatives over various space-time regions relative to
the distance to the cone.

To understand what is needed for the energy estimate computation we
begin by derivating the density flux relation associated to our energy
density $e^{quasi}$. We begin with the first component of $e^{quasi}$,
which is $e_0$:
\[
\partial_te_0(t,x)=\sum_{j=1}^2\partial_{x_j}\left( U_tU_j +V_tV_j\right) +U_t\Box U+V_t(\Box+1)V.
\]
The last two terms can be expanded as follows
\begin{equation}
\label{FG}
\left\{
\begin{aligned}
&U_t\Box U= U_t\left( \mathbf{N_1}(v, \partial V)+\mathbf{N_1}(V, \partial v) + \mathbf{N_2}(u, \partial V)+\mathbf{N_2}(U, \partial v) \right) ,\\
&V_t(\Box+1)V= V_t\left( \mathbf{N_1}(v, \partial U)+\mathbf{N_1}(V, \partial u)+\mathbf{N_2}(u, \partial U)+\mathbf{N_2}(U, \partial u)\right).
\end{aligned}
\right.
\end{equation}

Next we turn our attention to the quasilinear correction
\begin{equation}
\label{dtB}
\begin{split}
\partial_t B_1(v; U,V)= & \ B_1(v_t; U, V)+ B_1(v; U_t, V)+ B_1(v; U, V_t)
\\
\partial_t B_2(u; U,V)= & \ B_2(u_t; U, V)+ B_2(u; U_t, V)+ B_2(u; U, V_t).
\end{split}
\end{equation}
Here we will combine the first, respectively third, terms in both RHS in equations \eqref{FG} with $\partial_tB_1(v;U,V)$, respectively with $\partial_tB_2(u;U,V)$.
Schematically, we obtain that
\[
\left\{
\begin{aligned}
&U_t \mathbf{N_1}(v, \partial V)+ V_t \mathbf{N_1}(v,\partial U) +  \partial_t B_1(v; U, V)=\partial_{x}C_1(\partial v, \partial U, \partial V)+ D_1(\partial^2 v, \partial U, \partial V)
\\
&U_t \mathbf{N_2}(u, \partial V)+ V_t \mathbf{N_2}(u, \partial U) +  
\partial_t B_2(u; U, V)=\partial_{x}C_2(\partial u, \partial U, \partial V)+ D_2(\partial^2 u, \partial U, \partial V),
\end{aligned}
\right.
\]
where $C_1,C_2$ and $D_1,D_2$ are algebraic trilinear forms. Here we  need  
to take a closer look at the structure of $D_1$ and $D_2$. Indeed, a simple 
direct computation shows that both of them have a null structure,
\begin{equation}\label{D-null}
\left\{
\begin{aligned}
D_1(\partial^2 v, \partial U, \partial V)= & \ \mathbf{N}(\partial v, U) \partial V
+ \mathbf{N}(\partial v, V) \partial U
\\
D_2(\partial^2 u, \partial U, \partial V)= & \ \mathbf{N}(\partial u, U) \partial V
+ \mathbf{N}(\partial u, V) \partial U
\end{aligned}
\right.
\end{equation}
where $\mathbf{N}(\cdot, \cdot)$ denote null forms, i.e. linear combinations of the null forms in \eqref{null forms-re}. The relation for $D_2$ is important for us, while the one for $D_1$ is less critical because of the better $t^{-1}$ decay enjoyed by the Klein-Gordon component $v$.

We also note that $B_j$ and $C_j$ do not have a null structure in general, 
as it can be seen by examining \eqref{corrections0}. However, they are matched and we will take advantage of this later on.

Summing up all these terms we obtain the following energy flux relation for
 solution to inhomogeneous linearized problem:
\begin{equation}
\label{dteq}
\partial_te^{quasi}(t,x)=\sum_{j=1}^2\partial_j f_j+g,
\end{equation}
where the fluxes $f_j$ have the expressions 
\begin{equation}
\label{fj}
f_j=U_jU_t+ V_jV_t+ C_1(\partial v, \partial U, \partial V) +  C_2(\partial u, \partial U, \partial V),
\end{equation}
and the source $g$ is a trilinear form with null structure and has components
as follows:
\begin{equation}
\label{g}
\begin{split}
g= & D_1(\partial^2 v, \partial U, \partial V)+ D_2(\partial^2 u, \partial U, \partial V).
\end{split}
\end{equation}

To complete the proof of the energy estimates we will have to bound
the source terms using the energy. The $v$-terms $D_1(\partial^2 v, \partial U, \partial V)$ will be well-behaved
because $\partial^2 v$ has $t^{-1}$ decay, but the $u$-terms $D_2(\partial^2 u, \partial U, \partial V)$ do not share this
property. Instead, for these terms we need a different idea which
takes advantage of their null structure.

This leads us to Alinhac's approach which establishes an improved
version of the ``standard'' energy inequality (by this we mean the
inequality corresponding to the multiplier $\partial_t$ case); such an
inequality yields, besides the usual fixed time energy bound,
a bound of the (weighted) $L^2$ norm in both variables $x$ and $t$ of
some \emph{good derivatives} of $(U, V)$ in special regions.

The special regions mentioned above are exactly the sets $C_{TS}^\pm$  
introduced earlier in \eqref{ct}, \eqref{cts}, \eqref{ctout}, and \eqref{ct1},
which provide a double dyadic decomposition of the space-time relative 
to the size of $t$ and the size of $t-r$ which  measures how far or close we are to
the cone.  To simplify the exposition we will use the notation $C_{TS}$ as a shorthand for either $C^+_{TS}$ or $C^-_{TS}$. 

 The good derivatives alluded to above  are exactly  the tangential
derivatives relative to the cones $\{ t - r = const\}$,
or equivalently\footnote{from the perspective of the estimates}, relative to the hyperboloids $\{t^2- x^2 = const\}$. These surfaces can be viewed as providing 
nearly equivalent foliations of the sets $C_{TS}^\pm$.

\begin{lemma}\label{l:alih} Assume the solutions $(u,v)$ to the main equations
  \eqref{WKG} or \eqref{WKG-cut} satisfy the bounds \eqref{boot1.1}-\eqref{boot3} over the space-time regions $C_{T}$. Then the solution
  $(U,V)$ of the linearized equation \eqref{general linearization}
satisfies
\begin{multline}\label{cts-bd}
\sup_{1\le S\lesssim T}\int_{C_{TS}}   \frac{1}{S}  \left\{  \left(  V_j +\frac{x_j}{r}V_t\right)^2 +   \left(  U_j +\frac{x_j}{r}U_t\right)^2 +  V^2 \right\}\, dx dt 
\lesssim \sup_{t\in [T, 2T]}E^{quasi}(U, V)(t).
\end{multline} 
\end{lemma}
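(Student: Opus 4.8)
The plan is to use Alinhac's ghost weight method, inserting a well-chosen weight into the standard energy multiplier in order to generate the left-hand side of \eqref{cts-bd} as a coercive bulk term. Concretely, introduce a smooth, bounded, increasing weight $q = q(t-r)$ with $q' \sim \langle t-r \rangle^{-1-\sigma}$ for a small $\sigma > 0$ (or a dyadic variant adapted to the fixed slab $S \le t-r \le 2S$), and use $e^{q} \partial_t (U,V)$ as the multiplier applied to the linearized system \eqref{general linearization}. After multiplying the $U$-equation by $e^q U_t$ and the $V$-equation by $e^q V_t$ and integrating by parts over $C_{TS}$, the weighted energy identity produces, in addition to the boundary energy terms at times $t = T$ and $t = \tilde T$ and the lateral flux terms, a positive bulk term of the form
\[
\int_{C_{TS}} q'(t-r) \, e^{q}\left[ \left(U_j + \frac{x_j}{r} U_t\right)^2 + \left(V_j + \frac{x_j}{r} V_t\right)^2 + V^2 \right] dx\, dt,
\]
which is exactly the good-derivative/Klein-Gordon combination appearing on the LHS of \eqref{cts-bd}, up to the weight $q' \sim S^{-1}$ on $C_{TS}$. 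The $V^2$ term is the one place where the Klein-Gordon mass is used: the mass term $V^2$ in $e_0$ interacts with $q'$ to contribute a copy of $\int q' e^q V^2$, so it comes essentially for free. The boundary energy terms are bounded by $\sup_{t \in [T,2T]} E^{quasi}(U,V)(t)$ since $e^q$ is comparable to $1$.

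Next I would handle the source terms. Using the energy flux relation \eqref{dteq}, the spacetime integral of $g = D_1(\partial^2 v, \partial U, \partial V) + D_2(\partial^2 u, \partial U, \partial V)$ against the ghost weight must be absorbed. Here one exploits the null structure \eqref{D-null}: every null form $\mathbf N(\partial u, \cdot)$ or $\mathbf N(\partial v, \cdot)$ carries a gain, in that it can be written as a bounded combination of a coefficient times a \emph{good} (tangential) derivative of one of the factors plus a coefficient times $\langle t-r\rangle/\langle t+r\rangle$ times a bad derivative. The bootstrap assumptions \eqref{boot2}, \eqref{boot3} then give pointwise decay $|\partial^2 u| \lesssim C\epsilon \langle t+r\rangle^{-1/2}\langle t-r\rangle^{-1/2-\delta}$ and $|\partial^2 v| \lesssim C\epsilon \langle t+r\rangle^{-1}$ for the coefficient; combining this with the good-derivative gain from the null form, one bounds $|g| \lesssim C\epsilon \langle t+r \rangle^{-1/2+} (\text{good derivative of } U,V) \cdot |\partial(U,V)| + \ldots$. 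The point is that the good-derivative factor is Cauchy-Schwarzed against the coercive bulk term just produced (which we may afford to lose a small multiple of), while the remaining $|\partial(U,V)|$ factor together with the $\langle t+r\rangle^{-1/2}$-type weight is controlled by $\sqrt{S/T}$ times the energy after integrating in $t-r$ over the slab; since $S \lesssim T$ this closes with a factor $C\epsilon$ to spare. The $D_1$ terms are strictly easier because $\partial^2 v$ has full $t^{-1}$ decay, so no null structure is even needed there.

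The remaining ingredient is the lateral flux on $\partial C_{TS}$ (the faces $t - r = S$ and $t-r = 2S$, and the exterior analogues) coming from the $f_j$ in \eqref{dteq} together with the ghost-weight corrections. On these faces the flux is a positive multiple of a sum of squares of \emph{tangential} derivatives — this is the standard sign computation for the $\partial_t$-multiplier on a spacelike-or-null surface — so it has a favorable sign (as in step (iv) of the proof of Theorem~\ref{t:local}); the cubic corrections $C_1, C_2$ are negligible since $A \ll 1$ under \eqref{boot2.1}. One also needs to sum over the dyadic scales $1 \le S \lesssim T$ and take the supremum, which is immediate since the RHS of \eqref{cts-bd} does not depend on $S$ and the boundary terms telescope.

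\textbf{Main obstacle.} The delicate point is the treatment of the $D_2(\partial^2 u, \partial U, \partial V)$ source term: $\partial^2 u$ genuinely lacks $t^{-1}$ decay (it only has $\langle t+r\rangle^{-1/2}\langle t-r\rangle^{-1/2-\delta}$), so a naive estimate loses. One must carefully extract the good-derivative gain encoded in the null structure \eqref{D-null} and pair it precisely against the coercive $S^{-1}$-weighted bulk term, checking that the constants work out so that the loss is only $C\epsilon$ and hence absorbable. Getting the bookkeeping of weights right — matching the $S^{-1}$ from $q'$ on $C_{TS}$ with the $\langle t-r\rangle$ factors from the null forms and the $\langle t+r\rangle^{-1/2}$ from the bootstrap decay, uniformly over all dyadic $S \lesssim T$ — is where the real work lies.
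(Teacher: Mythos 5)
Your proposal is correct and follows essentially the same route as the paper: a ghost weight $e^{a}$ with $a=-A(t-r)$, $A'\approx S^{-1}$ on the dyadic slab, whose bulk term produces exactly the coercive combination $(U_j+\tfrac{x_j}{r}U_t)^2+(V_j+\tfrac{x_j}{r}V_t)^2+V^2$, with the $D_2(\partial^2 u,\partial U,\partial V)$ source handled by the null-form decomposition \eqref{rewrite2} and Cauchy--Schwarz against the coercive term. The only implementation difference is that the paper keeps the spatial integration over all of $\mathbb{R}^2$ (localizing only $A'$ to $|t-r|\approx S$), which avoids the lateral flux terms on $\partial C_{TS}$ altogether rather than relying on their sign or on telescoping.
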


Here we only consider the $C_{TS}$ regions with $S \lesssim T$, as the outer region
$C_T^{out}$ is uninteresting from this perspective.
Concerning the directional derivatives in the lemma, we note that they have a special structure:

\begin{remark}The  quantities appearing in \eqref{cts-bd}, i.e. $ V_j +\frac{x_j}{r}V_t$ and $ U_j +\frac{x_j}{r}U_t$, represent the  derivatives of $V$, respectively $U$, in the tangential directions to the cones $C=\left\{t-r=const\right\}$. We denote the them by
\[
\Tau_j= \partial_j +\frac{x_j}{r}\partial_t.
\]
\end{remark}
We further remark that we have the trivial bound
\begin{equation}
\label{close}
\sup_{1\le S \lesssim T}\int_{C_{TS}}   \frac{1}{T} \left( |\nabla U|^2 +  |\nabla V|^2  \right)dx dt 
\lesssim \sup_{t\in [T, 2T]}E^{quasi}(U, V)(t),
\end{equation}
which can be viewed as the natural complement of \eqref{cts-bd} for nontangential derivatives. In other words, \eqref{cts-bd} and \eqref{close} should be viewed 
as a pair. This last bound also shows that  \eqref{cts-bd} becomes trivial if 
$S \approx T$. Thus, in the proof we will be concerned with the case $1 \leq S \ll T$.

Closely related to the last comment, an important observation that applies in the $C_{TS}$ regions is that we can connect the vector fields $\Tau$, tangent to the cones, to the corresponding vector fields $Z$, tangent to the hyperboloids that foliate both the interior or the exterior of the cone:
\[
H_\rho = \{ t^2 - x^2 = \pm \rho^2 \}.
\]
Here we consider a hyperboloid  which intersects $C_{TS}^+$ provided that $\rho^2 \approx TS$. Since $S \ge 1$, this in particular requires that 
\[
T \lesssim \rho^2 \lesssim T^2.
\]
In this setting we note that vector fields $Z$ and $\Tau$ are related in general via
\begin{equation}
\label{ZT rel gen}
Z = t\Tau - \frac{x}{r}(t-r)\partial_t
\end{equation}
and, in particular in the $C_{TS}$ regions, by
\begin{equation}
\label{ZT rel CTS}
Z \approx T\Tau - S\partial_t.
\end{equation}

\
As the tangent planes to both the hyperboloids and cones are close to each other, via the estimate given in \eqref{close}, we give an alternative statement of Lemma \ref{l:alih} in terms of the $Z$ vector fields 
\begin{lemma}
\label{l:equiv lemma}
Under the same assumptions as in the Lemma \ref{l:alih}, we have
\begin{multline}\label{cts-bd-re}
\sup_{1 \leq S \lesssim T}\int_{C_{TS}}   \frac{1}{S}  \left\{  T^{-2} (|ZU|^2 + |ZU|^2) +  V^2 \right\}
+ \frac{1}{T}  (|\nabla U|^2 + |\nabla V|^2) 
\, dx dt
\lesssim \sup_{t\in [T, 2T]}E^{quasi}(U, V)(t).
\end{multline} 

\end{lemma}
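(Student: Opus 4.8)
The plan is to deduce \eqref{cts-bd-re} directly from Lemma~\ref{l:alih} together with the trivial bound \eqref{close}, using only the algebraic relations \eqref{ZT rel gen}--\eqref{ZT rel CTS} between the Klainerman fields $Z$ and the tangential fields $\Tau_j = \partial_j + \frac{x_j}{r}\partial_t$. Note first that the right-hand side of \eqref{cts-bd-re} coincides with that of \eqref{cts-bd}, and that the $V^2$ term and the term $\frac1T(|\nabla U|^2 + |\nabla V|^2)$ on the left of \eqref{cts-bd-re} already appear on the left of \eqref{cts-bd} and \eqref{close} respectively. So the only thing to prove is that, on each dyadic region $C_{TS}$ with $1 \le S \lesssim T$, one has the pointwise bound $\frac{1}{ST^2}(|ZU|^2 + |ZV|^2) \lesssim \frac1S(|\Tau U|^2 + |\Tau V|^2) + \frac1T(|\nabla U|^2 + |\nabla V|^2)$, after which integrating over $C_{TS}$, taking the supremum over $S$, and invoking Lemma~\ref{l:alih} and \eqref{close} finishes the argument.

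To get this pointwise bound I would first make \eqref{ZT rel CTS} precise. On $C_{TS}$ we have $t \sim T$ and $|t-r| \sim S$; decomposing the spatial rotations as $\Omega_{ij} = x_j\Tau_i - x_i\Tau_j$ and the Lorentz boosts as $\Omega_{0i} = t\Tau_i - \frac{x_i}{r}(t-r)\partial_t$, and then rewriting $\partial_t = -\partial_r + \sum_k \frac{x_k}{r}\Tau_k$, every $Z$ takes the form $t\,(\text{tangential}) \pm (t-r)\partial_r$ with coefficients of size $t$ and $|t-r|$, the lower-order tangential remainders being absorbed using $|t-r| \lesssim t$. Hence
\[
|ZU| \lesssim T\,|\Tau U| + S\,|\nabla U|, \qquad |ZU|^2 \lesssim T^2\,|\Tau U|^2 + S^2\,|\nabla U|^2 \qquad \text{on } C_{TS},
\]
and likewise for $V$. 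Dividing by $ST^2$ and using $S\lesssim T$,
\[
\frac{1}{ST^2}\,|ZU|^2 \lesssim \frac1S\,|\Tau U|^2 + \frac{S}{T^2}\,|\nabla U|^2 \le \frac1S\,|\Tau U|^2 + \frac1T\,|\nabla U|^2 ,
\]
which is exactly the desired bound once summed over $U$ and $V$. Since $\Tau_j V = V_j + \frac{x_j}{r}V_t$ is precisely the quantity estimated in \eqref{cts-bd}, the $\frac1S|\Tau U|^2$-type terms are controlled by Lemma~\ref{l:alih} and the $\frac1T|\nabla U|^2$-type terms by \eqref{close}. The same relations read backwards, $T\Tau = Z + S\partial_r$, show conversely that \eqref{cts-bd} follows from \eqref{cts-bd-re} modulo \eqref{close}, so the two formulations are genuinely equivalent.

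There is no real analytic difficulty in this step: it is pure bookkeeping, with the Alinhac ghost-weight estimate of Lemma~\ref{l:alih} doing all the substantive work. The only place requiring a bit of care is checking that the coefficients in the $Z$--$\Tau$ relation are uniformly of size $T$ (tangential part) and $S$ (radial part) throughout $C_{TS}$ --- including on the boundary shell $C_{T1}$ where $S\sim 1$ --- and that rewriting $\partial_t$ in terms of $\partial_r$ and $\Tau$ does not generate terms with the wrong weight; both are immediate from $t\sim T$ and $|t-r|\sim S$ on $C_{TS}$.
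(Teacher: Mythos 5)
Your reduction is algebraically correct: the identities $\Omega_{ij}=x_j\Tau_i-x_i\Tau_j$ and $\Omega_{0i}=t\Tau_i-\frac{x_i}{r}(t-r)\partial_t$ do give $|ZU|\lesssim T|\Tau U|+S|\nabla U|$ on $C_{TS}$, and dividing by $ST^2$ and using $S\lesssim T$ yields $\frac{1}{ST^2}|ZU|^2\lesssim \frac1S|\Tau U|^2+\frac1T|\nabla U|^2$, so that \eqref{cts-bd-re} follows from \eqref{cts-bd} plus \eqref{close}. This is precisely the equivalence the paper itself asserts in the first sentence of its proof (``due to \eqref{close} it makes no difference if we prove either of the results in Lemma \ref{l:alih} or Lemma \ref{l:equiv lemma}'').

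The problem is that this equivalence is the only content of your argument, while in the paper it is only a throwaway remark: the body of the proof of Lemma \ref{l:equiv lemma} \emph{is} the proof of Lemma \ref{l:alih}. The paper never proves Lemma \ref{l:alih} separately; the ghost-weight argument --- introducing $E^{quasi}_a$ with $a=-A(t-r)$, $A'\approx 1/S$ on $|t-r|\approx S$, deriving the density--flux identity \eqref{main-ee}, identifying $Q_2=|\Tau U|^2+|\Tau V|^2+V^2$ as the coercive term, and then absorbing the $D_{1,j}$, $D_{2,j}$, $D_1$, $D_2$ contributions using the bootstrap bounds and the null-form decomposition \eqref{rewrite2} --- all lives under the heading ``Proof of Lemma \ref{l:equiv lemma}.'' So if your proof were substituted for the paper's, Lemma \ref{l:alih} would have no proof anywhere, and citing it as a black box makes the logic circular. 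To make your route legitimate you would need to supply the ghost-weight energy estimate as the proof of Lemma \ref{l:alih} first; as written, your proposal defers all of the substantive analysis to a statement that is itself only established by the very proof you are replacing.
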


\begin{proof}[Proof of Lemma \ref{l:alih}] 
We consider the following weighted version of the  energy $E (U,V)$
\begin{equation}
\label{weighted energy}
E_a\left(U,V\right) := \int_{\mathbb{R}^2}e^a e_0(t,x)\, dx,
\end{equation}
and similarly the weighted version of the quasilinear energy $E^{quasi}(U, V)$
\begin{equation}
\label{Alinach-quasi}
\begin{aligned}
E_a^{quasi}(U,V)&:=  \int_{\mathbb{R}^2}e^a e^{quasi}(t,x)\, dx . \\
\end{aligned}
\end{equation}

Here $e^a$ is a \emph{ghost weight}, which will be chosen such that
$a$ is bounded and the weight $e^a$ ultimately disappears from the inequalities.
Precisely, we will choose $a$ of the form
\begin{equation}\label{choose-a}
a (t,r): =-A (t-r),
\end{equation}
where $A$ is a bounded nondecreasing function. Then the  gain in the estimates will come from the contribution of $A'(t-r)$, which will be chosen to be positive. 

We can further specialize the choice of  the function $A(t-r)$ and 
separately adapt it to each dyadic space-time regions $C_{TS}$ for $1 \leq S \ll T$. Precisely, 
we can chose it so that 
\begin{equation}\label{A'}
A'(t-r)\approx \frac{1}{S}, \qquad \mbox{ for } \vert t- r \vert \approx S, \quad \mbox{ and  $A'(t-r) = 0$  elsewhere. }
\end{equation}

For such functions $A$ we  need to understand the time derivative
of $E_a^{quasi} (U, V)$.  Thus, using the equation \eqref{dteq} we have
\begin{equation*}
\begin{aligned}
\frac{d}{dt}E_a^{quasi}(U, V)&=  \int_{\mathbb{R}^2} \frac{d}{dt} [ e^a e^{quasi} (t,x)]\, dx  \\
& =  \int_{\mathbb{R}^2}  e^a a_t \, e^{quasi}  +e^a (\partial_j f_j + g)\, dx . \\
& =  \int_{\mathbb{R}^2}  e^a (a_t \, e^{quasi}  - a_j f_j) \, dx
+   \int_{\mathbb{R}^2} e^a g\, dx .
\end{aligned}
\end{equation*}
The second integrand involves the function $g$, which is a trilinear form with a null structure. The terms in the first integrand do not separately have a null structure,
so we will take a closer look at them together for our choice of $a$ as above.
Separating the quadratic and the cubic contributions, we write
\begin{equation*}
\begin{aligned}
a_t e^{quasi}  - a_j f_j & = -A'(t-r) (e^{quasi} + \frac{x_j}r f_j)
\\
& = -A'(t-r)
(Q_2(\partial U,\partial V) + Q_{3,1}(\partial v,\partial U,\partial V) + Q_{3,2}(\partial u,\partial U,\partial V)) ,
\end{aligned}
\end{equation*}
where $Q_2$ represents the quadratic term,
\[
Q_2(U, V) = e_0(\partial U,\partial V) + \frac{x_j}r  (U_t U_j+V_j V_t),
\]
and $Q_{3,1},Q_{3,2}$ represent the cubic terms,
\[
\aligned
Q_{3,i}(\partial v,\partial U,\partial V) = B_i(\partial v,\partial U,\partial V) +  \frac{x}r C_i(\partial w,\partial U,\partial V).
\endaligned
\]
Recombining the terms in $Q_2$ one obtains
\[
Q_2(U,V) = \left(  V_j +\frac{x_j}{r}V_t\right)^2 +   \left(  U_j +\frac{x_j}{r}U_t\right)^2 + V^2 ,
\]
which is exactly as in \eqref{cts-bd}. On the other hand, a short algebraic computation reveals the following structure for $Q_{3,i}$:
\[
Q_{3,i}(\partial w,\partial U,\partial V) = D_{1,i}(\Tau w,\partial U,\partial V) + D_{2,i}(\partial w,\partial U,\partial V),
\]
where 
\begin{itemize}
    \item $D_{1,i}$ has no null structure but only uses a tangential derivative 
    of $w$ ,
    \item $D_{2,j}$ has a null structure, i.e. can be represented as
    \[
     D_{2,j}(\partial w,\partial U,\partial V) =\mathbf{N}( w, U)\partial V + \mathbf{N}(w, V)\partial U.
    \]
\end{itemize}
Thus we  obtain
\begin{equation}\label{main-ee}
\begin{aligned}
\frac{d}{dt}E_a^{quasi}(U, V) & + \int_{\mathbb{R}^2} e^a A'(t-r)  Q_2(U,V)\, dx
\\
 =&  -  \int_{\mathbb{R}^2} e^a A'(t-r)( 
 D_{1,1}(\Tau v,U,V) + D_{1,2}(\Tau u,U,V))  \, dx
\\
& - \int_{\mathbb{R}^2} e^a A'(t-r)( 
 D_{2,1}(\partial v,\partial U,\partial V) + D_{2,2}(\partial u,\partial U,\partial V))  \, dx
\\
& +   \int_{\mathbb{R}^2} e^a ( 
 D_1(\partial^2 v, \partial U, \partial V)+ D_2(\partial^2 u, \partial U, \partial V)) \, dx .
\end{aligned}
\end{equation}
Now we integrate this relation between $T$ and $2T$. With our choice 
for $A$, the first integral in the left hand side  controls the expression on the left in Lemma~\ref{l:alih}. It remains to estimate the remaining terms on the right 
perturbatively.

\medskip

\emph{1. The contributions of $D_{1,j}$.} Here we use our bootstrap assumption 
to estimate 
\[
| \Tau u |+|\Tau v| \lesssim \epsilon T^{-1}S^{\delta},
\]
which implies that 
\[
 |D_{1,1}(\Tau v,\partial U,\partial V)| +|D_{1,2}(\Tau u,\partial U,\partial V)|
 \lesssim \  \epsilon T^{-1}S^{\delta} |\partial U| |\partial V|.
\]
Since $\vert A'\vert \lesssim S^{-1}$, this suffices in order to bound their contribution by the energy.

\medskip 
 \emph{2. The $v$-terms in $D_{2,1}$ and $D_1$. } Their contribution is 
\begin{equation}
\label{v-terms}
\int_{T}^{2T} \int_{\mathbb{R}^2} e^a [A'(t-r)D_{2,1}(\partial v,\partial U,\partial V)  + D_{1}(\partial^2 v,\partial U,\partial V)]\,  dxdt 
\end{equation}
which are all bounded using \eqref{boot3} for the $v$-factors and the energy $E_a^{quasi}$ for the $U$ and $V$ terms by 
\begin{equation}\label{le_Equasi}
\lesssim \epsilon \sup_{t\in [T, 2T]}E^{quasi}(U, V)(t).
\end{equation}

\medskip 

 \emph {3. The $u$-terms in $D_{2,2}$ and $D_2$.} These have the form
\begin{equation}
\label{u-terms}
\int_{T}^{2T}\int_{\mathbb{R}^2} e^a [A'(t-r)D_{2,2}(\partial u,\partial U,\partial V)  + D_{2}(\partial^2 u,\partial U,\partial V)] \, dx dt,
\end{equation}
which we need to process further. In the region $C^{out}_T$ the pointwise bounds \eqref{boot2.1} and \eqref{boot2} give a $t^{-1}$ decay for both $\partial u$ and $\partial^2 u$, so this is identical to the case of the $v$ terms above.
It remains to consider the contribution  over $C_T^{in}:= C_T\setminus C^{out}_T$,
where we will exploit the null structure of $D_{2,2}$ and $D_2$, see \eqref{D-null}.

The key property is that all null forms can be expressed in the form
\begin{equation}
\label{rewrite2}
\mathbf{N}(\phi,\psi) =   \partial \phi \cdot \mathcal{T} \psi + \mathcal{T}\phi \cdot \partial \psi,
\end{equation}
or equivalently 
\begin{equation}
\label{rewrite1}
\mathbf{N}(\phi,\psi) = \frac{1}{t} \left( \partial \phi \cdot Z \psi + Z \phi \cdot \partial \psi   + 
(t-r) \partial \phi \cdot \partial \psi\right).
\end{equation}
 
We begin with $D_2$, which contains terms of the form $ \mathbf N(\partial u, V) \partial U$ and $\mathbf{N}(\partial u, U)\partial V$. We consider the first term, as the second will be similar. By \eqref{rewrite2} we have
\begin{equation}\label{rewrite}
\mathbf N(\partial u, V) =   \partial^2 u \cdot \mathcal{T} V + \mathcal{T}\partial u \cdot \partial V .
\end{equation}
For the last term we can directly use our bootstrap assumptions in
\eqref{boot1} and \eqref{boot2} to obtain the pointwise bounds
\[
|\mathcal{T}\partial u | + \left| \frac{t-r}{t}\partial^2 u \right| \lesssim \epsilon T^{-1}S^{-\delta_1}.
\]
 Hence the contributions of those  terms are estimated as in Case 1 by \eqref{le_Equasi}.

The contribution of the first term in \eqref{rewrite} to the integral in \eqref{u-terms} is more delicate because now the $\Tau$ vector field applies to $V$. So instead we split the integral over $C^{in}_T$ into the sum of integrals over the $C_{TS}$ space-time regions, apply the Cauchy-Schwartz inequality in space-time, and H\"older's inequality in time in each such region, as well as \eqref{boot2}, to bound each of these integrals by
\[
\begin{aligned}
\left| \int_{C_{TS}} \partial U \, \partial^2 u \, \mathcal{T} V\, dx dt \right| &\lesssim \Vert \partial^2 u\Vert_{L^{\infty}_{C_{TS}}}  \Vert \partial U \Vert_{L^2_{C_{TS}}} \left\| \Tau V \right\|_{L^2_{C_{TS}}}   \\
&\lesssim\epsilon T^{-\frac{1}{2}} S^{-\frac{1}{2}-\delta_1} T^{\frac{1}{2}} \left( \sup_{t\in [T, 2T]}E^{quasi}(U, V)(t) \right)^{\frac{1}{2}} S^{\frac{1}{2}}  \left\| S^{-\frac{1}{2}} \Tau V \right\|_{L^2_{C_{TS}}}   \\
&\lesssim \epsilon  S^{-\delta_1} \left( \sup_{t\in [T, 2T]}E^{quasi}(U, V)(t) \right)^{\frac{1}{2}} \left\| S^{-\frac{1}{2}} \Tau V\right\|_{L^2_{C_{TS}}},
\end{aligned}
\]
and after a straightforward $S$ summation over $1 \leq S \leq T$, we get 
\[
\begin{aligned}
\left| \int_{ C_T^{in}}  e^a D_{2}(\partial^2 u,\partial U,\partial V)\, dx dt \right| 
\lesssim & \ \epsilon  \left( \sup_{t\in [T, 2T]}E^{quasi}(U, V)(t) \right)^{\frac{1}{2}} \sup_{1 \leq S \leq T} \left\| S^{-\frac{1}{2}} \Tau (U,V)\right\|_{L^2_{C_{TS}}}\\
& \ + \epsilon   \sup_{t\in [T, 2T]}E^{quasi}(U, V)(t)  .\\
\end{aligned}
\]

The bound for the contribution of $D_{2,2}$ is similar, with the difference that 
the integrand is now localized to a fixed dyadic region $C_{TS}$ and that we are using the bootstrap assumption \eqref{boot2.1} for $\partial u$ instead of \eqref{boot2} for $\partial^2 u$. 
Using also \eqref{A'} we obtain
\[
\begin{aligned}
\left| \int_{ C_T^{in}}\!\!\!  e^a A'(t-r)D_{2,2}(\partial u,\partial U,\partial V)  \, dx dt \right| \lesssim &  \ \epsilon S^{-1+\delta}  \left( \!\sup_{t\in [T, 2T]}E^{quasi}(U, V)(t) \!\right)^{\frac{1}{2}} \!\! \sup_{1 \leq S \leq T}\! \left\| S^{-\frac{1}{2}} \Tau (U,V)\right\|_{L^2_{C_{TS}}}\\
& \ + \epsilon S^{-1+\delta}  \sup_{t\in [T, 2T]}E^{quasi}(U, V)(t),  \!\!
\end{aligned}
\]
where the $S^{-1+\delta}$ gain insures the summation in $S$, but it is not otherwise needed in the sequel. 

Overall we have proved that
\[
\eqref{u-terms} \lesssim \epsilon \sup_{t\in [T, 2T]}E^{quasi}(U, V)(t) +\epsilon   \sup_{1\le S\lesssim T} \left\| S^{-\frac{1}{2}}  \Tau(U,V)\right\|^2_{L^2_{C_{TS}}}.
\]
\medskip

Summing all up all the contributions to the integrated form of \eqref{main-ee},
we obtain 
\[
\int_{C_{TS}} S^{-1}Q_2(U,V) \, dx dt \lesssim \epsilon\sup_{t\in [T, 2T]}E^{quasi}(U, V)(t) +\epsilon   \sup_{1\le S\lesssim T} \left\| S^{-\frac{1}{2}}  \Tau(U,V)\right\|^2_{L^2_{C_{TS}}}.
\]
Finally we take the supremum over $1 \leq S \leq T$. Then 
the last term on the right can be absorbed on the left, which concludes the proof of the lemma. 

\end{proof}

Now we conclude the proof of the Proposition~\ref{p:linear}. For this we repeat the
computation above with $a(t,r)=0$. We integrate the relation
\eqref{main-ee} from $T$ up to an arbitrary $t\in [T, 2T]$, and estimate
the RHS exactly as in the proof of the Lemma~\ref{l:equiv lemma}. We obtain
\begin{equation}
\label{this}
E^{quasi}_a(t) -E^{quasi}_a(T) \lesssim \epsilon \sup_{t_0\in [T, 2T]} E^{quasi}_a(t_0),
\end{equation}
and taking the supremum over $t\in [T, 2T]$ gives
\[
\sup_{t\in [T,2T]}E^{quasi}_a(t)\lesssim E^{quasi}_a(T),
\]
which concludes the proof of the proposition.
\end{proof}

A consequence of the proof of Proposition~\ref{p:linear} is that, in addition to the uniform energy bound in $[T,2T]$, 
we also gain uniform control of the localized energies in the left hand side of \eqref{cts-bd}.
It will be useful in effect to obtain a slight improvement over \eqref{cts-bd}, where we think of $C_{TS}^+$ 
as foliated by hyperboloids and obtain uniform $L^2$ bounds over each such hyperboloid.

To set the notations, consider a hyperboloid 
\[
H_\rho = \{ t^2 - x^2 = \rho^2 \},
\]
which intersects $C_{TS}^+$ provided that $\rho^2 \approx TS$. Since $S \ge 1$, this in particular requires that 
\begin{equation}
\label{delta}
T \lesssim \rho^2 \lesssim T^2.
\end{equation}
Then we have the following:

\begin{lemma}
Under the same assumptions as Proposition~\ref{p:linear}, the solution $(U,V)$ to \eqref{general linearization}
satisfies
\begin{equation}\label{hrho-bd}
\sup_{T \lesssim \rho^2 \lesssim T^2} \int_{H_\rho \cap C_T}  T^{-2} (|Z U|^2 + |ZV|^2  + \rho^2 (|\nabla U|^2 +|\nabla V|^2)) 
  + V^2\ dx  \lesssim E(U,V)(T).
\end{equation}
\end{lemma}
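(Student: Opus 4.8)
The bound \eqref{hrho-bd} is the hyperboloidal counterpart of the localized bound \eqref{cts-bd-re}, and the plan is to obtain it by re-running the energy flux argument behind Proposition~\ref{p:linear}, but integrating the flux identity \eqref{dteq} over a spacetime region adapted to a single hyperboloid rather than over a time slab. Fix $\rho$ with $T\lesssim\rho^2\lesssim T^2$ and set $S:=\rho^2/T$, so that $1\le S\lesssim T$ and $H_\rho\cap C_T$ sits inside $C_{TS}^+$. Let $\Omega_\rho:=\{(t,x):\ T\le t\le\min(\sqrt{\rho^2+|x|^2},\,2T)\}$, the part of $C_T$ lying on the exterior side $\{t^2-|x|^2\le\rho^2\}$ of $H_\rho$, whose boundary is a piece of $\{t=T\}$, a piece of $\{t=2T\}$, and the surface $H_\rho\cap C_T$. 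Applying the divergence theorem to the current $(e^{quasi},-f_1,-f_2)$, whose spacetime divergence is $g$ by \eqref{dteq}, gives
\begin{equation*}
\int_{H_\rho\cap C_T}\!\Big(e^{quasi}+\tfrac{x_j}{t}f_j\Big)dx
=\int_{\{t=T\}\cap\Omega_\rho}\!\!e^{quasi}\,dx-\int_{\{t=2T\}\cap\Omega_\rho}\!\!e^{quasi}\,dx+\int_{\Omega_\rho}\!g\,dt\,dx .
\end{equation*}

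The left side is a hyperboloidal energy flux. Completing the square and using $\partial_j+\tfrac{x_j}{t}\partial_t=\tfrac1t\Omega_{0j}$ together with $1-\tfrac{r^2}{t^2}=\tfrac{\rho^2}{t^2}$ on $H_\rho$, its quadratic part equals
\begin{equation*}
Q_2^{hyp}:=\tfrac12\Big[\tfrac1{t^2}\big(|ZU|^2+|ZV|^2\big)+\tfrac{\rho^2}{t^2}\big((\partial_tU)^2+(\partial_tV)^2\big)+V^2\Big]\ \ge\ 0 ,
\end{equation*}
which on $H_\rho\cap C_T$ (where $t\approx T$) dominates the integrand of \eqref{hrho-bd}, since $|\nabla_xU|^2\lesssim t^{-2}|ZU|^2+(\partial_tU)^2$ and $\rho^2\lesssim T^2$; one also records the pointwise coercivity $Q_2^{hyp}\gtrsim\tfrac{S}{T}(|\partial U|^2+|\partial V|^2)$ on $H_\rho\cap C_T$. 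The cubic part of the left side, $B_1+B_2+\tfrac{x_j}{t}(C_1+C_2)$, is rewritten exactly as the $Q_{3,j}$ terms in \eqref{main-ee}, namely as $D_{1,j}(\Tau w,\partial U,\partial V)+D_{2,j}(\partial w,\partial U,\partial V)$ with $w\in\{u,v\}$, where $D_{1,j}$ only carries a tangential derivative of the background and $D_{2,j}$ is null-structured; using the bootstrap decay of $\Tau w$ and $\partial w$, the null-form identity \eqref{rewrite2}, the relation $\Tau\approx t^{-1}Z$ on $C_{TS}$, and the consequence $\int_{H_\rho}|\partial U|^2\lesssim\tfrac{T}{S}\int_{H_\rho}Q_2^{hyp}$ of the coercivity, one checks this cubic part is $\lesssim\epsilon\int_{H_\rho\cap C_T}Q_2^{hyp}$ and hence absorbable.

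On the right side, the $\{t=T\}$ integral is $\le E^{quasi}(U,V)(T)\approx E(U,V)(T)$, and the $\{t=2T\}$ integral is nonnegative and discarded. For the source $\int_{\Omega_\rho}g$, note that the interior part of $\Omega_\rho$ lies in $\{0\le t-r\lesssim S\}$, so $\Omega_\rho\subset C_T^{out}\cup C_{T1}\cup\bigcup_{1\le S'\lesssim T}C_{TS'}^-\cup\bigcup_{1\le S'\lesssim S}C_{TS'}^+$, and on each piece one reruns Cases~2--3 of the proof of Lemma~\ref{l:equiv lemma}. In $C_T^{out}$ the finite speed of propagation bounds from Proposition~\ref{p:out}/Proposition~\ref{p:out-boot} give $|\partial u|+|\partial^2u|\lesssim\epsilon T^{-1}$, so that contribution is $\lesssim\epsilon T^{-1}\int_{C_T}|\partial U||\partial V|\lesssim\epsilon E(U,V)(T)$ using $\int_{C_T}|\partial U|^2\lesssim T\sup_{[T,2T]}E^{quasi}\lesssim TE(U,V)(T)$ (by \eqref{quasi}). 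On each $C_{TS'}^\pm$ the $D_1(\partial^2v,\cdot)$ term is handled with \eqref{boot3}, while the $D_2(\partial^2u,\cdot)$ term is treated via its null structure \eqref{D-null}: its leading piece $\partial^2u\cdot\Tau V\cdot\partial U$ is estimated by Cauchy--Schwarz and Hölder in spacetime using $\|\partial^2u\|_{L^\infty(C_{TS'})}\lesssim\epsilon T^{-1/2}(S')^{-1/2-\delta}$ (from \eqref{boot2}), the localized bound $\|\Tau V\|_{L^2(C_{TS'})}\lesssim(S')^{1/2}E(U,V)(T)^{1/2}$ from Lemma~\ref{l:alih}, and $\|\partial U\|_{L^2(C_{TS'})}\lesssim T^{1/2}E(U,V)(T)^{1/2}$ from \eqref{close}, which yields $\epsilon(S')^{-\delta}E(U,V)(T)$ per dyadic $S'$ and a convergent sum; the remaining pieces of $D_2$ carry a $\Tau\partial u$ or $\tfrac{t-r}{t}\partial^2u$ factor of size $\lesssim\epsilon T^{-1}$ and are even easier. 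Hence $\int_{\Omega_\rho}|g|\lesssim\epsilon E(U,V)(T)$, so $\int_{H_\rho\cap C_T}Q_2^{hyp}\,dx\lesssim E(U,V)(T)$, and since the bound is uniform in $\rho$ we conclude \eqref{hrho-bd}.

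The delicate points I anticipate are: first, the geometric bookkeeping of $\Omega_\rho$ and of its boundary, so that the hyperboloidal flux appears with a favorable sign and the two time-slice pieces are genuinely bounded by $E(U,V)(T)$; and second --- more substantively --- that the cubic boundary terms $B_j,C_j$ on $H_\rho$, which are \emph{not} null forms, must nevertheless be absorbed into the \emph{degenerate} positive quadratic form $\int_{H_\rho}Q_2^{hyp}$. The latter is exactly where the tangential/null splitting of the corrections and the coercivity $Q_2^{hyp}\gtrsim\tfrac{S}{T}|\partial U|^2$ on $H_\rho\cap C_T$ are essential, and where the constraint $S\ge1$ (i.e. $\rho^2\gtrsim T$) is used.
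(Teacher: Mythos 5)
Your proposal is correct and follows essentially the same route as the paper: integrating the flux identity \eqref{dteq} over the portion of $C_T$ below $H_\rho$ (the paper's region $D$ equals your $\Omega_\rho$), identifying the coercive hyperboloidal flux on $H_\rho\cap C_T$, and absorbing the cubic boundary terms and the bulk source $g$ perturbatively as in Proposition~\ref{p:linear}. Your more careful points — the sign $e^{quasi}+\frac{x_j}{t}f_j$ for the positive-definite flux and the tangential/null splitting of the cubic boundary terms — are refinements of the paper's terser treatment, not a different argument.
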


\begin{proof}
With small differences the  proof mimics the proof of Proposition~\ref{p:linear}.
We consider the domain 
\[
D = \{ (x,t) \in C_T; \ t^2 - x^2 \leq \rho^2 \},
\]

\begin{figure}[h]
\begin{center}
\begin{tikzpicture}[scale=1.9]

\draw[->] (0,-0.3) -- (0,2.5);
\draw[->] (-2.5,0) -- (2.5,0);
\node[below] at (2.4,0) {\small $x$};
\node[left] at (0,2.4) {\small $t$};

\draw[red, thick]  (-2.5,0.8) -- (2.5,0.8);
\node[below, left] at (0,0.7) {\tiny $T$};
\draw[red,thick]  (-2.5,1.6) -- (-1.058,1.6);
\node[above, left] at (0,1.7) {\tiny $2T$};
\draw[red,thick]  (1.058,1.6) -- (2.5,1.6);
\draw[dashed] (-1.03,1.6) -- (1.058,1.6);

\draw[red,thick] [domain=-1.058:1.058] plot(\x, {((1.2)^2+(\x)^2)^(0.5)});
\draw[dashed] [domain=-2:-1.058] plot(\x, {((1.2)^2+(\x)^2)^(0.5)});
\draw[dashed] [domain=1.058:2] plot(\x, {((1.2)^2+(\x)^2)^(0.5)});
\node[below, left] at (1.8,2.2) {\tiny $H_\rho$};
\node[red] at (-1.5,1.2) {\tiny $D$};

\draw[dashed] [domain=0:2.4] plot(\x,\x);
\draw[dashed] [domain=-2.4:0] plot(\x,-\x);
\node[right] at (2.2,2.2) {\tiny $t=|x|$};

\fill [red!40,nearly transparent, domain=-1.058:1.058, variable=\x]
  (-1.058, 0.8)
  -- plot ({\x},{((1.2)^2+(\x)^2)^(0.5)})
  -- (1.058, 0.8)
  -- cycle;

\fill[red!40,nearly transparent] (-2.5, 0.8) -- (-2.5,1.6) -- (-1.058,1.6) -- (-1.058, 0.8) -- cycle;
\fill[red!40,nearly transparent]  (1.058,1.6) -- (1.058, 0.8) -- (2.5, 0.8) -- (2.5, 1.6) -- cycle;

\end{tikzpicture}
\caption{Region $D$ in 1+1 space-time dimension}
\label{f:D}
\end{center}
\end{figure}
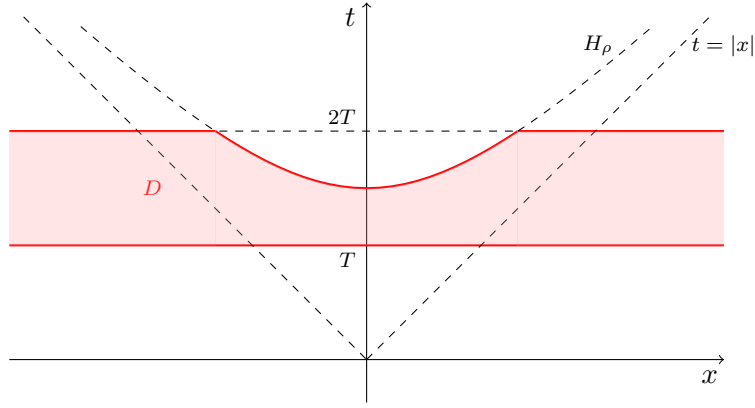

\noindent which represents the portion of $C_T$ below the hyperboloid $H_\rho$ (see figure \ref{f:D} which depicts the case when the hyperboloid intersects the surface $t=2T$, but not the $t=T$). Then we integrate the 
relation \eqref{dteq} over $D$, estimating the contribution of $g$ exactly as in the proof of  Proposition~\ref{p:linear}.
The contributions on the bottom $t = T$ and the top $t = 2T$ are simply the energies.
This yields the bound
\begin{equation}
\label{hiperboloid}
\int_{H_\rho \cap C_T} e^{quasi}(t,x) - \frac{x_j}{t} f_j\,  dx \lesssim E(U,V)(T).
\end{equation}
Here we have used the normal vector $n = (1,-\frac{x}{t})$ on $H_{\rho}$.
It remains to verify that the integrand is positive definite and controls the integrand in the left hand side 
of \eqref{hrho-bd}.

The leading contribution comes from $e_0$ and the quadratic part $f_{j0}$ of the $f_j$ and gives exactly the correct expressions. 
It remains to perturbatively estimate the cubic contributions to the integrand in \eqref{hiperboloid}, which under the assumption \eqref{delta}, has size
\[
\lesssim (|\nabla v|+|\nabla u|) |\nabla U| |\nabla V| \lesssim \frac{\epsilon}{\sqrt{ST}} (|\nabla U|^2+ |\nabla V|^2) \ll 
\epsilon\frac{\rho^2}{T^2} (|\nabla U|^2+ |\nabla V|^2) 
\]
as needed.
\end{proof}

To streamline the notations, it will help to introduce the following norm for functions $(U,V)$ in $C_T$,
\begin{equation}\label{XT}
\| (U,V)\|_{X^T}^2 := \sup_{t \in [T,2T]} E(U,V)(t) + LHS \mbox{\eqref{cts-bd}} + LHS  \mbox{\eqref{hrho-bd}}.
\end{equation}
Then we have the following 
\begin{corollary}\label{c:linear}
Under the same assumptions as Proposition~\ref{p:linear}, the solution $(U,V)$ to \eqref{general linearization}
satisfies
\begin{equation}\label{all-bd}
\|(U,V)\|_{X^T}^2  \lesssim E(U,V)(T).
\end{equation}
\end{corollary}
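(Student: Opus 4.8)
The plan is to assemble the corollary directly from three facts already established above, with essentially no new computation: the dyadic energy bound \eqref{quasi} obtained in the course of proving Proposition~\ref{p:linear}, the localized energy estimate of Lemma~\ref{l:alih} (equivalently Lemma~\ref{l:equiv lemma}), and the hyperboloidal bound \eqref{hrho-bd}. The only real work is to track constants carefully and to pass consistently between $E$ and $E^{quasi}$. Note in particular that, unlike \eqref{est1}, this is a per-dyadic-interval statement, so the relevant input is the internal estimate \eqref{quasi} rather than the accumulated bound with the $t^{\tilde C\epsilon}$ factor.

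First I would record that, under the bootstrap assumptions \eqref{boot1.1}--\eqref{boot3} and for $\epsilon$ small, the energy equivalence $\frac12 E^{quasi}(U,V) \le E(U,V) \le 2 E^{quasi}(U,V)$ noted just above Proposition~\ref{p:linear} holds at every time in $[T,2T]$. Combining this with \eqref{quasi}, which gives $\sup_{t\in[T,2T]} E^{quasi}(U,V)(t) \lesssim (1+\epsilon C)\, E^{quasi}(U,V)(T)$, produces the fixed-time term in the $X^T$ norm:
\[
\sup_{t\in[T,2T]} E(U,V)(t) \lesssim E(U,V)(T).
\]

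Next I would feed this into the right-hand side of Lemma~\ref{l:alih}: since the left-hand side of \eqref{cts-bd} is $\lesssim \sup_{t\in[T,2T]} E^{quasi}(U,V)(t)$, the previous step together with the energy equivalence bounds it by $E(U,V)(T)$; in the remaining range $S\approx T$ the estimate \eqref{cts-bd} is anyway subsumed by \eqref{close}, and the same fixed-time bound applies. Finally, the lemma producing \eqref{hrho-bd} already asserts that its left-hand side is $\lesssim E(U,V)(T)$. Summing these three contributions according to the definition \eqref{XT} of $\|(U,V)\|_{X^T}^2$ then yields \eqref{all-bd}.

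The one subtlety, and the closest thing to an obstacle, is to ensure that the implicit constant in \eqref{all-bd} is genuinely independent of the bootstrap constant $C$. The $C$-dependence enters only through the factors $1+\epsilon C$ coming from the cubic corrections associated with $B_j$ and $C_j$, and these are harmless once $\epsilon$ is taken small relative to $C$, exactly as in the conclusion of the proof of Proposition~\ref{p:linear}. With that observation in place the corollary follows immediately.
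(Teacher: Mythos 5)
Your proposal is correct and matches the paper's (essentially implicit) argument: the corollary is assembled directly from the dyadic energy bound \eqref{quasi}, Lemma~\ref{l:alih} (with \eqref{close} covering $S\approx T$), and the hyperboloid bound \eqref{hrho-bd}, using the equivalence $E^{quasi}\approx E$ under the bootstrap assumptions. Your remark on keeping the implicit constant independent of $C$ via the smallness of $\epsilon$ is also consistent with how the paper handles it.
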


Another  direct consequence of Proposition ~\eqref{p:linear} and of above corollary  is the following
corollary which derives similar energy estimates but for the
non-homogeneous analogue of \eqref{general linearization}:
\begin{equation}
\label{nonhom general linearization}
\left\{
\begin{aligned}
&(\partial_t^2-\Delta_{x})U(t,x)= \mathbf{N_1}(v, \partial V) +\mathbf{N_1}(V,\partial v)  + \mathbf{N_2}(u, \partial V) +\mathbf{N_2}(U,\partial v) + \mathbf{F}(t,x) \\
&(\partial_t^2-\Delta_{x}+1)V(t,x)=\mathbf{N_1}(v, \partial U)+\mathbf{N_1}(V, \partial u) + \mathbf{N_2}(u, \partial U)+\mathbf{N_2}(U, \partial u) +\mathbf{G}(t,x) .\\
\end{aligned}
\right.
\end{equation}
where $\mathbf{F}$ and $\mathbf{G}$ are arbitrary functions of $t$ and $x$.

\begin{corollary} \label{c:nonhlinear} Assume the solutions to the
  main equations \eqref{WKG} satisfy the bounds \eqref{boot1.1}- \eqref{boot3} in some time interval $[0, T]$. Then
  the non-homogeneous linearized equation \eqref{nonhom general linearization} is well-posed in $[0, T]$ and the solution
  satisfies
\begin{equation}
\label{energy est inho}
\sup_{t\in [T,2T]}E^{quasi}(U,V)(t) \leq (1+\epsilon \tilde C)E^{quasi}(U,V)(T)
+ \Vert (\mathbf{F}, \mathbf{G})\Vert_{L^1_tL^2_x}^2 ,
\end{equation}
where $\tilde C \approx C$  with $C$ as in \eqref{boot1.1}-\eqref{boot3}. In addition, we have
\begin{equation}\label{all-bd-inhom}
\|(U,V)\|_{X^T}^2  \lesssim E(U,V)(T)+ \Vert (\mathbf{F}, \mathbf{G})\Vert_{L^1_tL^2_x}^2 .
\end{equation}
\end{corollary}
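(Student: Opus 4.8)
The plan is to treat $\mathbf{F},\mathbf{G}$ as a perturbation on top of the homogeneous analysis already carried out for Proposition~\ref{p:linear}, Corollary~\ref{c:linear}, and the lemma giving \eqref{hrho-bd}. Note first that the bilinear part of \eqref{nonhom general linearization} has the same structure as \eqref{general linearization}, so when we recompute the density--flux identity \eqref{dteq} for \eqref{nonhom general linearization} the only change is that the expansion of $U_t\Box U+V_t(\Box+1)V$ now carries the two extra terms $U_t\mathbf{F}+V_t\mathbf{G}$. Hence
\[
\partial_t e^{quasi}(t,x)=\sum_{j=1}^2\partial_j f_j+g+U_t\mathbf{F}+V_t\mathbf{G},
\]
with $f_j$ and $g$ exactly as in \eqref{fj}, \eqref{g}, and in particular with $g$ retaining the null structure \eqref{D-null}. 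The contributions of $f_j$ and $g$ are estimated, after integration in space-time and (for the $X^T$ part) after inserting the ghost weight $e^a$ as in Lemma~\ref{l:equiv lemma}, verbatim as in the proofs of Proposition~\ref{p:linear} and of the hyperboloid lemma; this produces precisely the $(1+\epsilon\tilde C)E^{quasi}(U,V)(T)$ term on the right of \eqref{energy est inho}, plus localized energy terms that are absorbed on the left.

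Next I would bound the new source contribution. Integrating the identity from $T$ to an arbitrary $t\in[T,2T]$ and applying Cauchy--Schwarz in $x$ at each fixed time,
\[
\left|\int_T^t\!\!\int_{\mathbb{R}^2}(U_t\mathbf{F}+V_t\mathbf{G})\,dx\,ds\right|
\le\int_T^{2T}\|(\partial_t U,\partial_t V)(s)\|_{L^2}\,\|(\mathbf{F},\mathbf{G})(s)\|_{L^2}\,ds
\lesssim\Big(\sup_{s\in[T,2T]}E^{quasi}(U,V)(s)\Big)^{\frac12}\,\|(\mathbf{F},\mathbf{G})\|_{L^1_tL^2_x},
\]
using $\|(\partial_t U,\partial_t V)(s)\|_{L^2}^2\lesssim E^{quasi}(U,V)(s)$, which holds by the energy equivalence under the bootstrap hypotheses \eqref{boot1.1}--\eqref{boot3}. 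By Young's inequality with a small parameter $\eta$ this is $\le\eta\sup_{s\in[T,2T]}E^{quasi}(U,V)(s)+C_\eta\|(\mathbf{F},\mathbf{G})\|_{L^1_tL^2_x}^2$. Combining with the homogeneous estimate, taking the supremum over $t\in[T,2T]$, and choosing both $\epsilon$ and $\eta$ small enough so that the resulting $\epsilon$-- and $\eta$--multiples of $\sup_{[T,2T]}E^{quasi}(U,V)$ are absorbed into the left-hand side, yields \eqref{energy est inho}. The analogous argument with the ghost weight $e^a\approx 1$ and the space-time integrals taken over the domains $D$ and $C_{TS}$ as in the proofs of Lemma~\ref{l:equiv lemma} and of the hyperboloid lemma --- noting that the $L^2_x$ norm over any sub-region is controlled by the full $L^2_x$ norm, while the remaining $\partial U,\partial V$ factors are controlled by the energy --- gives the localized bound \eqref{all-bd-inhom}.

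Since the estimates are essentially a re-run of the homogeneous case, there is no genuine obstacle; the only point requiring mild care is the bookkeeping of constants. The coefficient of $E^{quasi}(U,V)(T)$ in \eqref{energy est inho} must stay of the form $1+\epsilon\tilde C$ (so that, when iterated over the $O(\log T)$ dyadic intervals, the product telescopes to $T^{\tilde C\epsilon}$), which forces us to absorb the $\sup_{[T,2T]}E^{quasi}$ piece produced by the forcing via Young's inequality tuned against $\epsilon$ rather than hiding it inside a generic $\lesssim$; and the $L^1_tL^2_x$ norm of the forcing must enter quadratically, which is exactly what Young's inequality delivers. Well-posedness of \eqref{nonhom general linearization} in $[0,T]$ then follows from the a priori bound \eqref{energy est inho} together with the linear existence theory, in the same manner as in Proposition~\ref{p:linear}.
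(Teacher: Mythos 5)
Your argument is correct, but it takes a different route from the paper's. The paper disposes of the corollary in one line: it treats \eqref{nonhom general linearization} by Duhamel's principle (variation of parameters), superposing solutions of the homogeneous linearized problem and invoking Proposition~\ref{p:linear} (and Corollary~\ref{c:linear}, using that $X^T$ is a norm so Minkowski's integral inequality applies) for each piece. You instead re-derive the density--flux identity with the forcing included, so that the only new term is $U_t\mathbf{F}+V_t\mathbf{G}$, and close by Cauchy--Schwarz and Young. The two are morally equivalent here --- for a variable-coefficient linearized flow, Duhamel is itself justified by exactly the homogeneous energy estimates you are re-running --- but your version is more self-contained and has the advantage that the localized $X^T$ bound \eqref{all-bd-inhom} comes out directly, since the ghost-weight and hyperboloid arguments are naturally phrased as space-time integral identities over $C_{TS}$ and $D$ rather than through the solution operator. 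What Duhamel buys is brevity and the fact that one never has to revisit the structure of $g$, $D_{1,j}$, $D_{2,j}$.

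One bookkeeping caveat, which your proposal half-acknowledges but does not resolve: the cross term $(\sup E^{quasi})^{1/2}\Vert(\mathbf{F},\mathbf{G})\Vert_{L^1_tL^2_x}$ cannot be split so as to produce \emph{simultaneously} the coefficient $1+\epsilon\tilde C$ on $E^{quasi}(U,V)(T)$ and the coefficient $1$ on $\Vert(\mathbf{F},\mathbf{G})\Vert_{L^1_tL^2_x}^2$ as literally written in \eqref{energy est inho}: tuning Young's inequality against $\epsilon$, as you propose, gives $(1+\epsilon\tilde C)E^{quasi}(T)+C\epsilon^{-1}\Vert(\mathbf{F},\mathbf{G})\Vert^2$, while an $O(1)$ tuning preserves the source constant but degrades $1+\epsilon\tilde C$ to an absolute constant. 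The paper's Duhamel route suffers from the identical cross term upon squaring, so this is a looseness of the statement rather than a gap in your argument; it is harmless in the applications (e.g.\ \eqref{quasin-source}) where the source already carries a factor of $\epsilon$. You should simply state which trade-off you are making and note that the resulting constant is absorbed downstream.
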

\begin{proof}
The proof of the corollary is a direct consequence of Proposition~\ref{p:linear} and of the variation of parameters principle (i.e. Duhamel's principle).
\end{proof}

\section{Higher order energy estimates}
\label{s:higher}
The main goal of this section is to establish energy bounds for $(u,
v)$ and their higher derivatives.  We will compare this
system with the linearized system which was studied in
Section~\ref{s:linearization} and use a large portion of the
estimates already obtained for the non-homogeneous linearized system
\eqref{nonhom general linearization}, as in Corollary~\ref{c:nonhlinear}.

We start with the equations \eqref{WKG} and differentiate them $n$
times. Here the variables that play the role of the linearized
variables $(U,V)$ are the $n$ times differentiated variables $(\partial^n u,\partial^n v)$,
which we will denote by $(u^n, v^n)$. We differentiate \eqref{WKG} $n$
times and separate the terms into leading order and lower order
contributions, interpreting the differentiated equation as a linearized equation with 
a source term, as in \eqref{nonhom general linearization}:
\begin{equation} \label{NWKG}
\left\{
\begin{aligned}
&(\partial^2_t  - \Delta_x) u^n(t,x) =  \mathbf{N_1}(v, \partial v^n)+ \mathbf{N_1}(v^n, \partial v)  + \mathbf{N_2}(u, \partial v^n)+ \mathbf{N_2}(u^n, \partial v)  + \mathbf{F^n}
\\
&(\partial^2_t  - \Delta_x  + 1)v^n(t,x) =   \mathbf{N_1}(v, \partial u^n) + \mathbf{N_1}(v^n, \partial u) + \mathbf{N_2}(u, \partial u^n) + \mathbf{N_2}(u^n, \partial u) + \mathbf{G^n},
\\\end{aligned} 
\right.
\end{equation}
where the source terms have the form
\[
\mathbf{F^n}(t,x) =\sum_{k=1}^{n-1}\mathbf{N}_1(v^k,\partial v^{n-k})
+\mathbf{N}_2(u^k, \partial v^{n-k}), \ \mathbf{G^n}(t,x) = \sum_{k=1}^{n-1}\mathbf{N}_1(v^k, \partial u^{n-k})+
\mathbf{N}_2(u^k, \partial u^{n-k}).
\]

Our energy estimates for the differentiated system will be proved under the same bootstrap
assumptions we previously imposed \eqref{boot1.1}-\eqref{boot3}.
The main result of this section is as follows:
\begin{proposition} \label{p:high-energy}
Let $n \geq 4$. Assume the solutions $(u,v)$ to the original main
  equations \eqref{WKG} or \eqref{WKG-cut} are defined in $\H^n$  in some time interval $[0, T]$, 
and satisfy the bootstrap  bounds  \eqref{boot1.1}- \eqref{boot3}.  Then the following bound holds
\begin{equation}
\label{d1}
E^{n} (u, v)(t)\lesssim t^{\tilde{C}\epsilon}E^{n}(u,v)(0), \quad t\in [0,T],
\end{equation} 
for some positive constant  $\tilde{C}$.
\end{proposition}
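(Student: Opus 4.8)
The plan is to run the estimate dyadically in time and feed the differentiated equations into the linearized theory of Section~\ref{s:linearization}. For every multi-index $\beta$ with $|\beta|\le n$ the pair $(\partial^\beta u,\partial^\beta v)$ solves a system of the type \eqref{nonhom general linearization} (this is exactly \eqref{NWKG} when $|\beta|=n$), with a source $(\mathbf F^{|\beta|},\mathbf G^{|\beta|})$ that is a finite sum of null-form terms $\mathbf{N}(\partial^{a}w_1,\partial^{b}w_2)$ with $w_1,w_2\in\{u,v\}$, $a,b\ge 2$, $\max(a,b)\le|\beta|+1$ and $a+b=|\beta|+3$. Since the bootstrap bounds \eqref{boot1.1}--\eqref{boot3} are in force, Corollary~\ref{c:nonhlinear} applies on each dyadic interval $[T,2T]\subset[0,T]$ to each such pair; summing \eqref{energy est inho} and \eqref{all-bd-inhom} over $|\beta|\le n$ and writing $\mathcal X^T:=\sum_{|\beta|\le n}\|(\partial^\beta u,\partial^\beta v)\|_{X^T}^2$ and $\mathcal S^T:=\sum_{m\le n}\|(\mathbf F^m,\mathbf G^m)\|_{L^1_tL^2_x([T,2T])}^2$, we obtain
\[
\mathcal X^T\lesssim E^n(T)+\mathcal S^T,\qquad
\sup_{t\in[T,2T]}E^n(t)\le(1+\tilde C\epsilon)\,E^n(T)+\mathcal S^T .
\]
Thus the entire proof reduces to the source estimate $\mathcal S^T\lesssim\epsilon^2\,\mathcal X^T$.

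To bound $\mathcal S^T$, fix a term $\mathbf{N}(\partial^{a}w_1,\partial^{b}w_2)$ with $a\le b$, so $a\le\lfloor(n+3)/2\rfloor$ and $b\le n+1$; we place the high factor $\partial^b w_2$ in $L^2$ (controlled by $(E^n(t))^{1/2}$, and after the null-form rewriting below by the localized $X^T$-norm of $\partial^{b-1}w_2$), and the low factor $\partial^a w_1$ in $L^\infty$. If $w_1=v$, then $|\partial^a v|\lesssim\epsilon\langle t\rangle^{-1}$: for $a\le3$ this is \eqref{boot3}, and for $4\le a\le n-1$ it follows (up to a harmless fractional power of $(E^n)^{1/2}$) by Gagliardo--Nirenberg interpolation against the $(n{+}1)$-st order energy, exactly as in \eqref{GN+}. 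Since $\langle t\rangle^{-1}$ is integrable over $[T,2T]$, after a Cauchy--Schwarz in time these contributions are $\lesssim\epsilon^2 E^n(T)$ once squared. The same holds for every term supported in the exterior region $C_T^{out}$, since there $\partial u$ and $\partial^2u$ also enjoy $\langle t\rangle^{-1}$ decay, as in Case~3 of the proof of Proposition~\ref{p:linear}.

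The only dangerous terms are therefore the $\mathbf N_2(\partial^a u,\partial^b w_2)$ contributions in $C_T\setminus C_T^{out}$ with a low $u$-factor, for which \eqref{boot2.1}--\eqref{boot2} only furnish the weaker decay $\langle t+r\rangle^{-1/2}\langle t-r\rangle^{-1/2-\delta}$; integrating $\langle t+r\rangle^{-1/2}$ over a dyadic interval naively produces an unacceptable growing factor $T^{1/2}$. This is the main obstacle, and the resolution is the null structure of $\mathbf N_2$: rewriting the null form by \eqref{rewrite2}--\eqref{rewrite1} trades one derivative on the $b$-th order factor for a tangential derivative $\Tau$ applied to $\partial^{b-1}w_2$ (resp. a $Z$ vector field on $\partial^{b-1}w_2$, resp. a $(t-r)/t\approx S/T$ weight). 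Decomposing $C_T\setminus C_T^{out}$ into the $C_{TS}$ regions, in each $C_{TS}$ we combine the pointwise bound $|\partial^a u|\lesssim\epsilon\,T^{-1/2}S^{-1/2-\delta}$ (for $a=1$ one uses instead $|\Tau\partial u|\lesssim\epsilon T^{-1}S^{-\delta}$ via \eqref{boot1} and \eqref{ZT rel CTS}) with the localized bounds built into $\|\partial^{b-1}w_2\|_{X^T}$: from \eqref{cts-bd} one has $\|\Tau(\partial^{b-1}w_2)\|_{L^2(C_{TS})}\lesssim S^{1/2}\|\partial^{b-1}w_2\|_{X^T}$ and, using \eqref{ZT rel CTS}, $\|Z(\partial^{b-1}w_2)\|_{L^2(C_{TS})}\lesssim S^{1/2}T\,\|\partial^{b-1}w_2\|_{X^T}$. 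The $S^{1/2}$ gain exactly cancels the $S^{-1/2}$ loss in the pointwise $u$-bound, and the $S/T$ weight does even better for the remaining term; a Cauchy--Schwarz in $t\in[T,2T]$ together with the convergent dyadic sums $\sum_{S}S^{-2\delta}\lesssim1$ and $\sum_{S\lesssim T}S^{1-2\delta}\lesssim T^{1-2\delta}$ then bounds each such contribution by $\epsilon\,(\mathcal X^T)^{1/2}$, hence $\lesssim\epsilon^2\mathcal X^T$ after squaring. The remaining bookkeeping --- which factor is ``low'', the exact interpolation exponents for intermediate $a$, and the split of $C_T^{out}$ --- is routine.

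Collecting the three steps, for $\epsilon$ small the source term is absorbed: $\mathcal S^T\lesssim\epsilon^2\mathcal X^T$ combined with $\mathcal X^T\lesssim E^n(T)+\mathcal S^T$ gives $\mathcal X^T\lesssim E^n(T)$ and $\mathcal S^T\lesssim\epsilon^2 E^n(T)$, whence $\sup_{t\in[T,2T]}E^n(t)\le(1+C\epsilon)E^n(T)$ on every dyadic interval $[T,2T]\subset[0,T]$, with $C$ independent of $T$. The interval $[0,1]$ is handled directly by part (c) of Theorem~\ref{t:local}, the bootstrap bounds guaranteeing $B\in L^\infty$ there. Iterating over the $\lesssim\log_2\langle t\rangle$ dyadic intervals up to time $t$ then yields $E^n(t)\lesssim(1+C\epsilon)^{\log_2\langle t\rangle}E^n(0)\lesssim t^{\tilde C\epsilon}E^n(0)$ with $\tilde C=C/\ln2$ after renaming, which is \eqref{d1}.
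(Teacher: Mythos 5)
Your proposal follows essentially the same route as the paper: differentiate the system, view the result as the inhomogeneous linearized equation \eqref{nonhom general linearization}, apply Corollary~\ref{c:nonhlinear} on dyadic time intervals, and reduce everything to an $L^1_tL^2_x$ bound on the commutator sources, which you handle exactly as the paper does --- via the $\langle t\rangle^{-1}$ decay of $v$ plus Gagliardo--Nirenberg interpolation for the $v$-terms, and via the null-form rewriting \eqref{rewrite2}, the $C_{TS}$ decomposition, and the localized $\Tau$-bounds encoded in the $X^T$ norm for the $u$-terms. The only (cosmetic) difference is that you close by summing over all orders $|\beta|\le n$ and absorbing, where the paper argues by induction on $n$ through the pair \eqref{quasina}--\eqref{quasinb}; the interpolation bookkeeping you defer as routine is precisely what occupies the bulk of the paper's proof, but your outline of it is accurate.
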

\begin{remark} \label{r:remark}As defined the energies $E^n(u,v)(t)$ measure not only higher order spatial derivatives for $(u,v)$ but also higher order time derivatives of $(u,v)$. On the other hand at the initial time we want to measure only the size of the Cauchy data, which means at most one time derivative of $(u,v)$. However our a priori bounds (bootstrap assumptions \eqref{boot1.1}-\eqref{boot3}) suffice in order to estimate the Cauchy data to higher order time derivatives of $(u,v)$ in terms of the corresponding bounds for the Cauchy data of $(u,v)$. This is a relatively straightforward  exercise which is left for the reader.
\end{remark}
\begin{proof} The proof heavily uses the energy estimates for the
  linearized system \eqref{general linearization} in the previous
  section. We will use the differentiated equations \eqref{NWKG}
to inductively prove bounds on the differentiated functions $(u^n,v^n)$.
For this we will rely on the energy $E^{quasi}$ and on the bounds in 
Corollary~\ref{c:nonhlinear}.

We begin by discussing the case when $n = 0$. The easy way to handle this case is to relate it to the linearized equation, by simply thinking at \eqref{WKG} as being  written as in Corollary \ref{c:nonhlinear}  where $\mathbf{F}$ and $\mathbf{G}$ are given by
\[
\mathbf{F}:= -\mathbf{N_1}(V,\partial v)-\mathbf{N_2}(U, \partial v) , 
\qquad \mathbf{G}:= -\mathbf{N_1}(V,\partial u)-\mathbf{N_2}(U, \partial u).
\]
The non-homogeneous term $\mathbf{F}$ can be easily bound in $L^1_tL^2_{x}(C_{TS})$ using a priori estimate \eqref{boot3}, while $\mathbf{G}$ is bounded in $L^1_tL^2_{x}(C_{TS})$ using the null structure highlighted in \eqref{rewrite2}, \eqref{rewrite1} as in the proof of Lemma \ref{l:equiv lemma}.

The case $n = 1$ is  a trivial consequence as $(\nabla u, \nabla v)$
exactly solve the linearized equation. So from here on we will assume that $n \geq 2$.

 Since we do not have a way to prove a good energy estimate working only at  fixed
  time, we will focus on proving a good energy estimate on dyadic time
  scales $[T,2T]$. Precisely, arguing by induction on $n$, it suffices to show that
  \begin{equation}
\label{quasin} 
\sup_{t\in [T,2T]} E^{quasi}(u^n,v^n)(t)\leq  E^{quasi}(u^n,v^n)(T)
+ \epsilon C  E^{quasi}(u^{\leq n},v^{\leq n})(T) .
\end{equation}
Just as in Proposition~\ref{p:linear}, to prove this we will use the stronger auxiliary norm $X^T$ in \eqref{XT},
which captures more of the structure of linearized waves. So instead of \eqref{quasin},
we will prove the pair of  bounds
\begin{equation}
\label{quasina} 
\sup_{t\in [T,2T]} E^{quasi}(u^n,v^n)(t)\leq  E^{quasi}(u^n,v^n)(T)
+ \epsilon C \|(u^{\leq n},v^{\leq n})\|_{X^{T}}^2 .
\end{equation}
\begin{equation}
\label{quasinb} 
\|(u^n,v^n)\|_{X^T}^2 \lesssim   E^{quasi}(u^n,v^n)(T)
+ \epsilon C  \|(u^{\leq n},v^{\leq n})\|_{X^{T}}^2 .
\end{equation}

To prove both of these bounds we rely on the results of the Corollary \ref{c:nonhlinear}, which shows that it suffices 
to obtain $L^1_tL^2_x$ bounds for the non-homogeneous contributions  $(\mathbf{F^n}, \mathbf{G^n})$.
Precisely, we will show the bound
\begin{equation}\label{quasin-source}
\| (\mathbf{F^n}, \mathbf{G^n})\|_{L^1 L^2_x}^2 \lesssim C \epsilon  \|(u^{\leq n},v^{\leq n})\|_{X^{T}}^2 ,
\end{equation}
which by Corollary \ref{c:nonhlinear} allows us to recover the estimates \eqref{quasina}, \eqref{quasinb}.

Analogous to the discussion of the terms in \eqref{v-terms}, \eqref{u-terms}, we need to deal with two types of terms:

\medskip

\noindent  \emph{1. The $v$-terms} have the form
\begin{equation}
\label{vn-terms}
\begin{aligned}
\mathbf{N}_1(v^k, \partial v^{n-k})\, , \quad k=\overline{1, n-1},
\end{aligned} 
\end{equation}
which we want to bound in $L^1_tL^2_x$. Here we do not need to use the null structure.
We discuss two possible terms: 

\begin{itemize}
\item[a)] The case when $k=1$. In this case we use the bound \eqref{boot3} for the $\partial^2 v$ term. We also  control  $\Vert \partial v^{n}\Vert_{L^2_x}$ from the $v^n$ energy.  Thus, we get fixed time  bound 
 \[
\Vert \mathbf{N}_1(\partial v, \partial v^{n-1}) \Vert_{L^2_x} \lesssim \epsilon T^{-1}\Vert \nabla v^n\Vert_{L^2_x}.
\]
We obtain the $L^1_tL^2_x$ bound by integrating over the time interval  $[T,2T]$.
\medskip

\item[ b)] The case when $k\geq 2$ and $n-k\geq 1$.  It suffices to estimate at fixed time
\[
\Vert \mathbf{N}_1(v^k, \partial v^{n-k})\Vert_{L^2_x},
\]
which is done by placing $\partial v^k$ in $L^{\frac{2(n-1)}{k-1}}$ and
$\partial^2 v^{n-k}$ in $L^{\frac{2(n-1)}{n-k}}$.  If all derivatives were spatial derivatives then we can bound these terms at fixed time using interpolation by the energy of $v^n$ and the bounds in
\eqref{boot3}
\[
\begin{aligned}
\Vert \mathbf{N}_1(v^k, \partial v^{n-k})\Vert_{L^2_x}&\lesssim  \Vert  \partial v^k \Vert_{L_x^{\frac{2(n-1)}{k-1}}}\Vert \partial^2 v^{n-k}\Vert_{L_x^{\frac{2(n-1)}{n-k}}}\\
&\lesssim \Vert \partial v^n\Vert_{L^2_x} \Vert\partial^2  v\Vert_{L^{\infty}_x}\\
& \lesssim C\epsilon T^{-1}\Vert \partial v^n\Vert_{L^2_x}.
\end{aligned}
\]
If some of these derivatives are time derivatives then the same argument applies with the one difference that on the right we use uniform norms over the interval $[T,2T]$. Integrating in time over the $[T,2T]$ time interval leads to the $L^1_tL^2_x$ bound. 

\end{itemize}

\medskip
\noindent \emph{2. The $u$-terms} are as follows: 
\begin{equation}
\label{un-terms}
\mathbf{N}_2(u^k, \partial v^{n-k}),\quad  \mathbf{N}_1(v^k, \partial u^{n-k}), \quad \mathbf{N}_2(u^k, \partial u^{n-k})  ,  \quad k=\overline{1, n-1},
\end{equation}
and also need to be bounded $L^1_tL^2_x(C_{T})$. The second term is treated as in Case 1 for $k=1$ so we will only consider it for $k=\overline{2,n-1}$. The analysis of the first two terms is almost identical, so we just discuss the first and third terms.
In fact we will estimate them in $L^2_tL^2_x$ and then use H\"older's inequality:
\[
\Vert \mathbf{N}(u^k, \partial w^{n-k}) \Vert_{L^1_tL^2_x}\lesssim  T^{\frac{1}{2}}\Vert \mathbf{N}(u^k, \partial w^{n-k})\Vert_{L^2_tL^2_x}, 
\]
where $ w=u,v,$ and $\mathbf{N}=\mathbf{N}_1$ or $\mathbf{N}=\mathbf{N}_2$ accordingly to \eqref{un-terms}.
We need to estimate the nonlinearity $\mathbf{N}(u^k, \partial w^{n-k})$ in
$L^2_tL^2_x$.  The difficulty here is that the second derivatives of $u$ do
not have $t^{-1}$ uniform decay, instead they decay like
$t^{-\frac{1}{2}}\langle t-r\rangle^{-\frac12-\delta_1}$. 

We begin by noting that in the region $C_T^{out}$ the second derivatives of $u$ do
have $t^{-1}$ uniform decay, so again the argument in Case 1 applies. From here on, we will 
consider the remaining region $C^{in}_T$ which is near the cone and corresponds to dyadic scales $
1 \leq S \ll T$. Here is where we make use of the null structure \eqref{rewrite2} as done in Lemma~\ref{l:equiv lemma}. We successively consider the two terms in  \eqref{rewrite2},
\begin{equation}\label{N_w=u}
\mathbf{N}(u^k, \partial w^{n-k}) = \partial u^k \cdot \mathcal{T} \partial w^{n-k} + \mathcal{T} u^k \cdot \partial^2 w^{n-k} .
\end{equation} 
We consider the $C_{TS}$ partition of the $C_T^{in}$ and will estimate the $L^2_{t,x}(C_{ST})$
norms separately.  As discussed above, we can assume that $S \ll T$;
also we will not distinguish between the $\pm$ (i.e. the interior vs
the exterior of the cone).

We first consider the case where $w=v$.
We estimate the first term in $C_{TS}$ using interpolation restricted to $C_{TS}$:
\[
\begin{aligned}
\Vert \partial u^{k}\cdot \Tau \partial v^{n-k}\Vert_{L^2}&\lesssim \Vert \partial^2 u\Vert_{L^{\infty}} ^{\frac{n-k}{n-1}} \Vert \partial u^{\leq n}\Vert_{L^2}^{\frac{k-1}{n-1}}  \Vert \Tau \partial v\Vert_{L^{\infty}}^{\frac{k-1}{n-1}} \Vert \Tau  v^{\leq n}\Vert_{L^2}^{\frac{n-k}{n-1}} 
\\
&\lesssim C \epsilon  \left( T^{-\frac{1}{2}}S^{-\frac{1}{2}-\delta_1}\right)^{\frac{n-k}{n-1}}   T^{-\frac{1}{2}\frac{k-1}{n-1}}  S^{\frac{1}{2}\frac{n-k}{n-1}} \Vert S^{-\frac{1}{2}}\Tau v^{\le n}\Vert_{L^2} ^{\frac{n-k}{n-1}} \sup_{t\in [T,2T]} \Vert \nabla u^{\le n}(t)\Vert^{\frac{k-1}{n-1}}_{L^2}\\
&\lesssim  C \epsilon T^{-\frac{1}{2}} S^{-\delta_1 \frac{n-k}{n-1}}   \Vert (u^{\leq n}, v^{\le n}) \Vert_{X^T} ,
\end{aligned}
\]
and similarly for the second, where we also use that $S\ll T$ in the region $C_{TS}$:
\[
\begin{aligned}
\Vert \Tau u^k \cdot \partial^2 v^{n-k} \Vert_{L^2}& \lesssim \Vert \Tau \partial u\Vert_{L^{\infty}}^{\frac{n-k}{n-1}} \Vert \Tau  u^{\leq n}\Vert_{L^2}^{\frac{k-1}{n-1}} \Vert \partial^2 v\Vert_{L^{\infty}} ^{\frac{k-1}{n-1}} \Vert \partial ^2v^{\leq n-1}\Vert_{L^2}^{\frac{n-k}{n-1}}
\\
&\lesssim C \epsilon \left(T^{-1}S^{-\delta_1}\right)^{\frac{n-k}{n-1}}  S^{\frac{1}{2}\frac{k-1}{n-1}}   T^{-\frac{k-1}{n-1} + \frac{1}{2}\frac{n-k}{n-1}} \Vert S^{-\frac{1}{2}}\Tau u^{\le n}\Vert_{L^2} ^{\frac{k-1}{n-1}} \sup_{t\in [T,2T]} \Vert \nabla v^{\le n}(t)\Vert^{\frac{n-k}{n-1}}_{L^2}\\
&\lesssim  C \epsilon T^{-\frac{1}{2}} S^{-\delta_1 \frac{n-k}{n-1}}   \Vert (u^{\leq n}, v^{\le n}) \Vert_{X^T} ,
\end{aligned}
\]

Note that commutators between $\Tau$ and derivatives yield  extra  $T^{-1}$ factors and hence give negligible contributions.
The dyadic summation over $S$ is trivial and hence
\[
\Vert \mathbf{N}(u^k, \partial v^{n-k})\Vert_{L^2_t L^2_x}\lesssim C\epsilon T^{-\frac{1}{2}}\Vert (u^{\le n}, v^{\le n})\Vert_{X^T}.
\]

A similar analysis is done on the $L^2_t L^2_x$ norm of \eqref{N_w=u} when $w=u$. Using interpolation restricted to $C_{TS}$ we find for the second term that:
\[
\begin{aligned}
&\Vert \partial^2 u^{n-k}\cdot \Tau u^k\Vert_{L^2}\lesssim  \Vert  \Tau u^k\Vert_{L^{\frac{2(n-1)}{k-1}}} \Vert\partial^2 u^{n-k}\Vert_{L^{\frac{2(n-1)}{n-k}}}
\\
&\lesssim \Vert \Tau \partial u\Vert_{L^{\infty}}^{\frac{n-k}{n-1}} \Vert \Tau  u^{\leq n}\Vert_{L^2}^{\frac{k-1}{n-1}} \Vert \partial^2 u\Vert_{L^{\infty}} ^{\frac{k-1}{n-1}} \Vert \partial ^2u^{\leq n-1}\Vert_{L^2}^{\frac{n-k}{n-1}}
\\
&\lesssim C \epsilon (T^{-1}S^{-\delta_1})^{\frac{n-k}{n-1}}  \left( T^{-\frac{1}{2}}S^{-\frac{1}{2}-\delta_1}\right)^{\frac{k-1}{n-1}} T^{\frac12\frac{n-k}{n-1}}  S^{\frac{1}{2}\frac{k-1}{n-1}}\Vert S^{-\frac{1}{2}}\Tau u^{\le n}\Vert_{L^2} ^{\frac{k-1}{n-1}} \sup_{t\in [T,2T]} \Vert \nabla u^{\le n}(t)\Vert^{\frac{n-k}{n-1}}_{L^2}\\
&\lesssim  C \epsilon T^{-\frac{1}{2}} S^{-\delta_1}   \Vert u^{\leq n} \Vert_{X^T} 
\end{aligned}
\]
The first estimate is treated very similarly and satisfies the same estimate.

The dyadic summation over $S$ is once again trivial and we find 
\[
\Vert \mathbf{N}(u^k, \partial u^{n-k})\Vert_{L^2_t L^2_x}\lesssim C\epsilon T^{-\frac{1}{2}}\Vert (u^{\le n}, v^{\le n})\Vert_{X^T}.
\]
This completes the proof of \eqref{quasin-source} and thus of our proposition. 

\end{proof}

We remark that exactly the same argument also applies to the truncated equation \eqref{WKG-cut}.
Also, implicit in the proof is the fact that we obtain also control over the $X^T$ norm of the solutions.
In addition to that, we will also obtain good control of the localized $L^2$ norms for the 
right hand side of the equation \eqref{WKG-cut}.  To best summarize those, we introduce the 
norms $Y^T$ for functions in $[T,2T] \times \mathbb{R}^2$ by
\begin{equation}
\label{YT}
\|( \mathbf{F},\mathbf{G})\|_{Y^T} = \sup_{1 \leq S \leq T}  T^{\frac12} \| ( \mathbf{F},\mathbf{G})\|_{L^2(C_{TS})}.
\end{equation}
We introduce this norm as a way to measure the RHS of \eqref{WKG-cut},
interpreted as a source term. Such an estimate will be needed later in
the proof of the pointwise estimates derived in Section \ref{s:ks}. 
To formulate the next result we will turn to the set up of the
Proposition \ref{p:energy} and we set $n=2\hh$. Then we have

\begin{proposition}
\label{p:prop}
 Let $n \geq 4$. Assume the solutions $(u,v)$ to either the equations
  \eqref{WKG-cut} or \eqref{WKG} are defined in $\H^n$ in the time interval $[0,
  2T_0]$, and satisfy the bootstrap bounds \eqref{boot1.1}- \eqref{boot3}. Then the following bounds hold:
\begin{equation}
\label{d1-cut}
E^{2\hh} (u, v)(t)\lesssim t^{\tilde{C}\epsilon}E^{2\hh}(u,v)(0), \quad t\in [0,T].
\end{equation}

In addition, for all $1 \leq T \leq T_0$ we have the following estimates in $C_T$: 
\begin{equation}
\label{d1X-cut}
\|\partial^{\leq 2\hh} (u, v)\|_{X^T}^2\lesssim T^{\tilde{C}\epsilon}E^{2\hh}(u,v)(0), 
\end{equation} 
and 
\begin{equation}
\label{d1Y-cut}
\|\partial^{\leq 2\hh-1} (\Box u, (\Box +1) v)\|_{Y^T}^2\lesssim T^{\tilde{C}\epsilon}E^{2\hh}(u,v)(0). 
\end{equation} 
\end{proposition}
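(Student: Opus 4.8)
The plan is to read all three assertions off Proposition~\ref{p:high-energy} and, more importantly, off the estimates that are already established in the course of its proof, specialized to $n=2\hh$. The energy bound \eqref{d1-cut} is nothing but Proposition~\ref{p:high-energy} itself, together with the observation recorded right after that proof that inserting the time cutoff $\chi_{T_0}$ is harmless: since $|\chi_{T_0}|\le 1$, every estimate carried out for \eqref{WKG} holds verbatim for \eqref{WKG-cut}. So the only new content is the passage from fixed-time energies to the $X^T$ and $Y^T$ norms.

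For \eqref{d1X-cut} I would use the fact that the proof of Proposition~\ref{p:high-energy} does not merely yield \eqref{quasin} but the stronger dyadic pair \eqref{quasina}--\eqref{quasinb}; the second of these reads $\|(u^n,v^n)\|_{X^T}^2\lesssim E^{quasi}(u^n,v^n)(T)+\epsilon C\|(u^{\le n},v^{\le n})\|_{X^T}^2$ for each $n\le 2\hh$. Summing over $n\le 2\hh$ and absorbing the term $\epsilon C\|(u^{\le 2\hh},v^{\le 2\hh})\|_{X^T}^2$ into the left-hand side (legitimate because $\epsilon$ is small and $2\hh$ is a fixed integer) gives $\|(u^{\le 2\hh},v^{\le 2\hh})\|_{X^T}^2\lesssim\sum_{n\le 2\hh}E^{quasi}(u^n,v^n)(T)$. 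Under the bootstrap bounds \eqref{boot1.1}--\eqref{boot3} the quasilinear energies are comparable to the linear ones, so this is $\approx E^{2\hh}(u,v)(T)$, and \eqref{d1-cut} then bounds the latter by $T^{\tilde C\epsilon}E^{2\hh}(u,v)(0)$; this is \eqref{d1X-cut}.

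For \eqref{d1Y-cut} I would apply $\partial^{\le 2\hh-1}$ to the right-hand sides of \eqref{WKG-cut} and expand by the Leibniz rule: $\partial^{\le 2\hh-1}\Box u$ is a sum of terms $\chi_{T_0}\mathbf{N}(\partial^a w,\partial^{b+1}v)$ with $a+b\le 2\hh-1$, always carrying the Klein--Gordon factor $\partial^{b+1}v$, while $\partial^{\le 2\hh-1}(\Box+1)v$ is a sum of terms $\chi_{T_0}\mathbf{N}(\partial^a w,\partial^{b+1}u)$, always carrying the wave factor $\partial^{b+1}u$. These are exactly the $v$-terms and $u$-terms already analyzed in the proof of Proposition~\ref{p:high-energy}: the $v$-terms via the $t^{-1}$ decay \eqref{boot3} when a $v$-factor is of low order (and via interpolation against $X^T$ otherwise), the $u$-terms via the null representations \eqref{rewrite2}, \eqref{rewrite1} localized to the $C_{TS}$ regions whenever a high-order $\partial^2 u$ factor is present; the ``diagonal'' values of $a$ are only easier. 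The crucial point is that those estimates were already obtained locally on each $C_{TS}$ with a gain of $T^{-1/2}$, of the schematic form $\|\mathbf{N}(\cdots)\|_{L^2(C_{TS})}\lesssim C\epsilon\,T^{-1/2}\|(u^{\le 2\hh},v^{\le 2\hh})\|_{X^T}$. Multiplying by $T^{1/2}$ and taking the supremum over $1\le S\le T$ produces precisely the $Y^T$ norm, so that $\|\partial^{\le 2\hh-1}(\Box u,(\Box+1)v)\|_{Y^T}\lesssim C\epsilon\|(u^{\le 2\hh},v^{\le 2\hh})\|_{X^T}$, and then \eqref{d1X-cut} finishes the job.

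I do not expect a genuinely new obstacle, since the proposition only repackages work already done; the one thing to be careful about is bookkeeping — checking that the dyadic-in-time compounding internal to Proposition~\ref{p:high-energy} really produces the stated factor $T^{\tilde C\epsilon}$, and that the $T^{-1/2}$ gain coming out of the source-term analysis matches exactly the $T^{1/2}$ weight built into the definition \eqref{YT} of $Y^T$ (rather than being off by a power, say). Both are met, and the entire argument is insensitive to whether one works with \eqref{WKG-cut} or with \eqref{WKG}.
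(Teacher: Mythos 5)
Your proposal is correct and follows essentially the same route as the paper, whose own proof of this proposition is just the observation that everything is already contained in Proposition~\ref{p:high-energy} and the estimates established in its proof, plus the caveat that for \eqref{d1Y-cut} one must additionally estimate the leading-order terms on the right-hand side of \eqref{NWKG}. Your derivation of \eqref{d1X-cut} from \eqref{quasinb} and of \eqref{d1Y-cut} from the $L^2(C_{TS})$ source-term bounds with the $T^{-1/2}$ gain is exactly what the paper intends; the only point where you are a bit more sanguine than the paper is the claim that the diagonal terms are ``only easier'' --- the paper singles out precisely these (in particular $\mathbf{N_2}(u,\partial\,\partial^{n}v)$, where the undifferentiated factor $\partial u$ has weaker decay than $\partial^2 u$) as the one piece not already implicit in Proposition~\ref{p:high-energy}, though it likewise leaves that estimate to the reader.
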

\begin{remark} The loss of derivative in the bound \eqref{d1Y-cut} is a consequence of the RHS of \eqref{WKG} or \eqref{WKG-cut} being one derivative higher than what we can control  with the $X^T$ norms.
 \end{remark}

\begin{proof}
The bounds \eqref{d1-cut} and \eqref{d1Y-cut} are direct consequences of Proposition~\ref{p:high-energy}.

The bound \eqref{d1Y-cut} is only partially implicit in the proof, as
one also needs to estimate the first four terms in RHS \eqref{NWKG}.
\end{proof}

We also remark that the result in Proposition \ref{p:prop} proves the 
bounds \eqref{main-energy+} and \eqref{main-RHS+} in Proposition \ref{p:energy}, 
but only when $\ZZ^\gamma$ are regular derivatives.

\section{Vector field energy estimates}
\label{s:fields}

The main goal of this section is to establish energy bounds for the
solution $(u,v)$ to which we have applied a certain number of vector fields
from the family of vector field \eqref{ZZ}. 
Precisely, we want to prove an energy bound for
$(\ZZ^\gamma u, \ZZ^\gamma v)$, where $\gamma$ counts the number of Klainerman vector
fields and spatial and time derivatives applied to the solution
$(u,v)$ of the equation \eqref{WKG}.

All the vector fields $Z$ in \eqref{Z} are related to the geometry of
the problem and they are the generators of the Lorenz transformations of the
Minkowski space $\mathbb{R}^{1+2}$ which preserve the equation
\eqref{WKG-lin}.  Under these circumstances, our aim is to prove an energy inequality for the energy
functional $E^{[2\hh]}$, which we introduced in \eqref{high vf  energy}.

Recall the notations already introduced in the Introduction, where
\[
\ZZ = \{ Z, \partial \}
\]
 is the collection of vector fields we work with. We will use 
multiindices $\alpha$ to count the number of spatial derivatives
and $\beta$ for $Z$ derivatives. We put these together in 
\[
\gamma = (\alpha,\beta),
\]
and set
\[
\ZZ^\gamma = \partial^\alpha Z^\beta .
\]
We use weights to measure the total number of derivatives
\[
|\gamma| = |\alpha| + h |\beta| .
\]
Here we use the parameter $\hh$ to choose a balance between the relative strength of  vector fields versus regular derivatives. This will allow us to work with only two vector fields provided that we have a larger number of regular derivatives, thus enabling  us to use very weak decay assumptions on the initial data. For this we will use the range  $\vert \gamma \vert \leq 2\hh$ which corresponds exactly to two vector fields. Ideally we will want $\hh$ to be as small as possible; its size will be dictated by the Klainerman-Sobolev inequalities in the next section.

One might wish to compare the system satisfied by $(\ZZ^\gamma u, \ZZ^\gamma v)$ with
the linearized system which was studied before in
Section~\ref{s:linearization}. We start by applying $\gamma$ vector fields $\ZZ$ to the equation
\eqref{WKG}. Here the variables that play the role of the linearized
variables $(U,V)$ are $(\ZZ^\gamma u,\ZZ^\gamma v)$. One difference when working with
the spatial rotations or with the Lorentz generators are the commutative
properties of these vector fields with respect to the null structure
nonlinearity.

Thus, applying $\ZZ^\gamma$ vector fields to both hand sides of \eqref{WKG} and denoting $\ZZ^\gamma u =: u^\gamma$
we obtain the inhomogeneous equations
\small
\begin{equation} \label{ZWKG}
\left\{
\begin{aligned}
&\Box u^\gamma (t,x) =  \mathbf{N_1}(v, \partial v^\gamma)  + \mathbf{N_1}(v^\gamma, \partial v) +
\mathbf{N_2}(u, \partial  v^\gamma)  + \mathbf{N_2}(u^\gamma, \partial v)
+
\mathbf{F}^\gamma  \\
&(\Box  + 1)v^\gamma (t,x) =
 \mathbf{N_1}(v, \partial  u^\gamma)  + \mathbf{N_1}(v^\gamma, \partial u) +
\mathbf{N_2}(u, \partial  u^\gamma)  + \mathbf{N_2}(u^\gamma, \partial u)
  + \mathbf{G}^\gamma,
 \\
 \end{aligned} 
\right.
\end{equation}\normalsize
where the source terms $(\mathbf{F}^\gamma,\mathbf{G}^\gamma)$ are of the form
\begin{equation} \label{ZWKG-source}
\mathbf{F}^\gamma := \sum_{\substack{|\gamma_1| + |\gamma_2| \leq |\gamma| \\ |\gamma_1|,|\gamma_2| < |\gamma|  }}
 \mathbf{N}( v^{\gamma_1}, \partial v^{\gamma_2}), \qquad 
\mathbf{G}^\gamma := \sum_{\substack{|\gamma_1| +| \gamma_2| \leq |\gamma| \\ |\gamma_1|,|\gamma_2| < |\gamma|  }}
 \mathbf{N}( v^{\gamma_1}, \partial u^{\gamma_2}).
\end{equation}
Here $\mathbf{N}(\cdot, \cdot)$ is a new linear combination of
quadratic null forms \eqref{null forms} arising from the commutator
terms.

To better understand the structure of the system \eqref{ZWKG} we need a full description of the
quadratic nonlinearities in $(\mathbf{F}^\gamma, \mathbf{G}^\gamma)$.
\begin{lemma} 
All the nonlinear terms in \eqref{ZWKG} are linear combinations of the quadratic null forms \eqref{null forms}.
\end{lemma}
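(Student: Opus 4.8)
The plan is to verify the claim by a direct structural induction on the weighted order $|\gamma|$ of the multiindex, keeping track of which vector field from $\ZZ$ is being commuted through the equation at each step. The base case $|\gamma| = 0$ is trivial: the nonlinearities in \eqref{WKG} are by hypothesis linear combinations of $Q_{ij}$, $Q_{0i}$, $Q_0$. For the inductive step, I would write $\ZZ^\gamma = W \ZZ^{\gamma'}$ where $W \in \{\partial_0, \partial_1, \partial_2, \Omega_{ij}, \Omega_{0i}\}$ and $|\gamma'| < |\gamma|$, and observe that applying $W$ to a term $\mathbf{N}(\phi,\partial\psi)$ produces, by the Leibniz rule, a sum of terms of the form $\mathbf{N}(W\phi, \partial\psi)$, $\mathbf{N}(\phi, W\partial\psi)$, plus a commutator correction $[W,\partial]$ acting inside, and a commutator correction coming from $W$ acting on the bilinear form $\mathbf{N}$ itself (equivalently on its constant coefficients and on the derivative structure). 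The whole point is that each of these pieces is again a linear combination of the three null forms in \eqref{null forms} applied to lower-weight differentiated components $v^{\gamma_1}, u^{\gamma_2}$ (or $v^{\gamma_2}$), after absorbing the commutator terms.

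The key algebraic facts I would invoke are the well-known commutation identities for the Klainerman vector fields with the null forms: the translation fields $\partial_\mu$ commute with every $Q$ exactly, i.e. $\partial_\mu Q(\phi,\psi) = Q(\partial_\mu\phi,\psi) + Q(\phi,\partial_\mu\psi)$; the rotation $\Omega_{ij}$ and the boost $\Omega_{0i}$ satisfy $Z Q(\phi,\psi) = Q(Z\phi,\psi) + Q(\phi,Z\psi) + \tilde{Q}(\phi,\psi)$ where $\tilde Q$ is again a (finite, explicit) linear combination of null forms of the same three types — this reflects the fact that the span of $\{Q_{ij}, Q_{0i}, Q_0\}$ is invariant under the action of the Lorentz Lie algebra (it is, up to the scalar null form $Q_0$, essentially the space of antisymmetric two-tensors $\partial_\mu\phi\,\partial_\nu\psi - \partial_\nu\phi\,\partial_\mu\psi$, on which the Lorentz group acts). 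I would also record that $[\partial, Z]$ is a constant-coefficient combination of $\partial$'s, so it does not move us outside the class and only shifts the multiindex bookkeeping. Assembling these, each application of a vector field $W$ sends a null form into a sum of null forms hitting components of strictly smaller weight in at least one factor, which is exactly the structure displayed in \eqref{ZWKG-source}; the inductive hypothesis then finishes the step.

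The main obstacle — really the only nonroutine point — is the $\Omega_{0i}$ (Lorentz boost) commutator: unlike $\partial$ and $\Omega_{ij}$, the boost does not annihilate the null forms, and one must check that the correction term $\tilde Q$ produced is still within the admissible class \eqref{null forms} and, crucially, still carries a genuine null structure (so that the decay gains exploited in Sections~\ref{s:linearization} and \ref{s:higher} via the identities \eqref{rewrite1}, \eqref{rewrite2} remain available for these commutator-generated terms). I would handle this by a short explicit computation: express each $Q_{0i}$, $Q_{ij}$, $Q_0$ in the frame $\{\partial_t, \partial_r, \Tau_j\}$, commute $\Omega_{0i}$ through, and read off that the result is again a linear combination of the same forms. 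The remaining verifications — Leibniz bookkeeping, that $|\gamma_1|, |\gamma_2| < |\gamma|$ and $|\gamma_1| + |\gamma_2| \le |\gamma|$ with the weight $|\gamma| = |\alpha| + \hh|\beta|$, and that regular-derivative commutators only lower $|\alpha|$ — are entirely mechanical and I would not grind through them here.
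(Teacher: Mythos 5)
Your proposal is correct and follows essentially the same route as the paper: the paper's proof is exactly the explicit verification that the span of $\{Q_{ij},Q_{0i},Q_0\}$ is invariant under $\Omega_{0i}$ and $\Omega_{12}$ (listing the commutation identities), combined with the commutators \eqref{commutators} showing that $[Z,\partial]$ stays within constant-coefficient derivatives. One small imprecision: the rotation $\Omega_{12}$ does not commute exactly with the null forms either (it produces the correction $(-1)^i Q_{0j}$ when acting on $Q_{0i}$), but since that correction is again a null form your general inductive framework already covers it.
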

\begin{proof} Simple computations show that this is indeed the case.  Iterations of the following formulas for the Klainerman vector field $\Omega_{0i}$
\begin{equation*}
\left\{
\begin{aligned}
&\Omega_{0i} Q_{12}(\phi,\psi ) = Q_{12}(\Omega_{0i}\phi , \psi) + Q_{12}(\phi, \Omega_{0i}\psi) -(-1)^i Q_{0j}(\phi,\psi), \quad &i,j\in\{1,2\}, i\ne j,\\
&\Omega_{0i} Q_{0j}(\phi,\psi) = Q_{0j}(\Omega_{0i} \phi,\psi) + Q_{0j}(\phi, \Omega_{0i}\psi)+Q_{ij}(\phi, \psi), \quad &i,j\in\{1,2\},\\
&\Omega_{0i} Q_0(\phi,\psi) = Q_0(\Omega_{0i}\phi, \psi) + Q_0(\phi, \Omega_{0i}\psi), \qquad & i=1,2,
\end{aligned}
\right.
\end{equation*}
as well for the $\Omega_{12}$ vector field
\begin{equation*}
\left\{
\begin{aligned}
&\Omega_{12} Q_{12}(\phi,\psi) = Q_{12}(\Omega_{12} \phi,\psi) + Q_{12}(\phi, \Omega_{12}\psi), \\
&\Omega_{12} Q_{0i}(\phi,\psi) = Q_{0i}(\Omega_{12}\phi, \psi) + Q_{0i}(\phi, \Omega_{12}\psi)+(-1)^i Q_{0j}(\phi,\psi), \quad i,j\in \{1,2\}, i\ne j,\\
&\Omega_{12} Q_0(\phi,\psi) = Q_0(\Omega_{12}\phi, \psi)+ Q_0(\phi, \Omega_{12}\psi) ,
\end{aligned}
\right.
\end{equation*}
show that indeed the quadratic nonlinearities have a null structure. Useful in our computations are also the commutator properties of individual vector fields acting on the null structures \eqref{null forms}, which we list below
\begin{equation}
\label{commutators}
\left[\Omega_{0i}, \partial_t \right] = -\partial_i, \quad \! \left[\Omega_{0i}, \partial_j \right] = -\delta_{ij}\partial_t, \quad \! \left[\Omega_{12}, \partial_t\right]=0, \quad \left[\Omega_{12}, \partial_1\right] = \partial_2, \quad \! \left[\Omega_{12}, \partial_2\right] = -\partial_1.
\end{equation}
\end{proof}

Our main vector field energy estimate is as follows:

\begin{proposition}\label{p:vectors} Let $(u,v)$ be solutions to  the equations \eqref{WKG-cut} in the time interval $[0, 2T_0]$, which in
 addition satisfy the corresponding bootstrap bounds \eqref{boot1.1}-\eqref{boot3} Then, for $T'<T_0$, they must also satisfy the bound
\begin{equation}
\label{d2-pre}
E^{[2\hh]}(u, v)(t)\lesssim t^{\tilde{C}\epsilon}E^{[2\hh]}(u,v)(0), \quad t\in [0,T'],
\end{equation} 
where $\tilde{C}$ is a positive constant.
\end{proposition}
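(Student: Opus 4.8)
The plan is to follow exactly the template established in Section~\ref{s:higher} for the higher-order energy estimates, now with the vector fields $\ZZ^\gamma$ in place of the pure spatial derivatives $\partial^n$. First I would reduce, by induction on $|\gamma|$, the fixed-time bound \eqref{d2-pre} to a dyadic-in-time energy growth estimate of the form
\begin{equation*}
\sup_{t\in [T,2T]} E^{quasi}(u^\gamma,v^\gamma)(t)\leq E^{quasi}(u^\gamma,v^\gamma)(T)+ \epsilon C\, \|(u^{\leq\gamma},v^{\leq\gamma})\|_{X^T}^2,
\end{equation*}
together with the companion $X^T$ bound; summing this geometric-type inequality over dyadic $T\le t$ yields the $t^{\tilde C\epsilon}$ factor exactly as before. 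As in the proof of Proposition~\ref{p:high-energy}, the cases $|\gamma|=0$ and the case where $\ZZ^\gamma$ consists only of derivatives are already handled (the latter being Proposition~\ref{p:prop}), so the new content is the presence of at least one Klainerman vector field.

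Next I would invoke Corollary~\ref{c:nonhlinear}: since $(u^\gamma,v^\gamma)$ solves \eqref{ZWKG}, which has the form of the non-homogeneous linearized system \eqref{nonhom general linearization} with source $(\mathbf F^\gamma,\mathbf G^\gamma)$, it suffices to prove the source bound
\begin{equation*}
\|(\mathbf F^\gamma,\mathbf G^\gamma)\|_{L^1_tL^2_x}^2 \lesssim C\epsilon\, \|(u^{\leq\gamma},v^{\leq\gamma})\|_{X^T}^2 .
\end{equation*}
Here the Lemma preceding this proposition is essential: it guarantees that every quadratic term appearing in $(\mathbf F^\gamma,\mathbf G^\gamma)$ is again a linear combination of the null forms \eqref{null forms}, so the null-structure identities \eqref{rewrite2}, \eqref{rewrite1} remain available after commuting with $Z$. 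The source terms split into $v$-type terms $\mathbf N(v^{\gamma_1},\partial v^{\gamma_2})$ and $u$-type terms $\mathbf N(v^{\gamma_1},\partial u^{\gamma_2})$, $\mathbf N(u^{\gamma_1},\partial u^{\gamma_2})$ with $|\gamma_1|+|\gamma_2|\le|\gamma|$ and both indices strictly smaller than $|\gamma|$. For the $v$-terms one uses only the $\langle t\rangle^{-1}$ decay of $v$ from \eqref{boot3} plus Gagliardo–Nirenberg interpolation in the $X^T$ norm, exactly as in Case 1 of Proposition~\ref{p:high-energy}. For the $u$-terms one works in $C_T^{out}$ trivially (there $\partial^2 u$ decays like $t^{-1}$), and in $C_T^{in}=\bigcup_{1\le S\ll T}C_{TS}$ one decomposes $\mathbf N$ via \eqref{rewrite2} into a $\Tau$-times-$\partial$ piece, a $\partial$-times-$\Tau$ piece, and a $\frac{t-r}{t}\,\partial\cdot\partial$ piece; the tangential derivatives are absorbed by the localized $S^{-1/2}\Tau$ control built into $X^T$ (via \eqref{cts-bd}), and the dyadic sum over $S$ converges with a gain $S^{-\delta(\cdot)}$.

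The one genuinely new feature relative to Section~\ref{s:higher}, and the step I expect to be the main obstacle, is the bookkeeping of the weighted multi-index count $|\gamma|=|\alpha|+\hh|\beta|$ in the interpolation inequalities: one must verify that when a product $u^{\gamma_1}\partial w^{\gamma_2}$ is split by Gagliardo–Nirenberg, the exponents distributing $|\gamma_1|$ and $|\gamma_2|$ among the $L^2$ factors (controlled by $\|(u^{\leq\gamma},v^{\leq\gamma})\|_{X^T}$) and the $L^\infty$ factors (controlled by the bootstrap bounds \eqref{boot1.1}--\eqref{boot3}, which bound $Zu$, $Z\partial u$, $\partial^{\le 3}u$, $\partial^{\le 3}v$) are compatible with $|\gamma_1|,|\gamma_2|<|\gamma|\le 2\hh$, i.e. that no term ever needs more than two vector fields or more than the available number of derivatives in $L^\infty$. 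This is where the choice of $\hh$ large enough matters, and it is the reason the bootstrap list includes both $|Zu|$ and $|Z\partial u|$ bounds. A secondary technical point is the treatment of commutators $[\Tau,\ZZ^{\gamma'}]$ and $[\Tau,\partial]$ generated when one moves the tangential derivative onto the lower-order factor: as noted at the end of the proof of Proposition~\ref{p:high-energy}, each such commutator produces an extra $T^{-1}$ (equivalently a factor of the extra derivative weight divided by $t$) and hence only improves the estimate, so these can be dispatched routinely using \eqref{commutators} and \eqref{ZT rel CTS}. Once \eqref{quasin-source}-type bound is in hand, Corollary~\ref{c:nonhlinear} delivers both \eqref{quasina} and \eqref{quasinb} for $\gamma$, closing the induction and yielding \eqref{d2-pre}.
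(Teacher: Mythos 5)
Your outline reproduces the paper's proof essentially step for step: reduction to dyadic time intervals and to the source bound via Corollary~\ref{c:nonhlinear}, the commutation lemma guaranteeing the sources $(\mathbf F^\gamma,\mathbf G^\gamma)$ retain null structure, the case split by the number of Klainerman fields, the null-form rewriting \eqref{rewrite1}--\eqref{rewrite2} on the dyadic regions $C_{TS}$ with the $S^{-1/2}\Tau$ component of the $X^T$ norm absorbing the tangential factors, and the $L^{p_1}\times L^{p_2}$ interpolation whose weighted multi-index bookkeeping you correctly single out as the main technical burden. The one place where your sketch, taken literally, would not close is the exterior region: there a Klainerman field carries a coefficient of size $|x|\gg T$, so a term such as $\mathbf N(\partial^{n_1}w,\partial^{n_2}Z w)$ is of size $|x|\,|\partial^{n_1+1}w|\,|\partial^{n_2+2}w|$ and is not handled merely by the $t^{-1}$ decay of $\partial^2 u$ in $C_T^{out}$ (that reasoning is what Section~\ref{s:higher} uses for pure derivatives). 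The paper instead invokes Proposition~\ref{p:out-boot} to obtain the uniform $x$-weighted exterior energy bound \eqref{ext-bd}, and then pairs the $|x|$ weight from $Z$ with the $\epsilon|x|^{-1}$ pointwise bounds and the $x^{-1}L^2$, $x^{-2}L^2$ components of $E^{[2\hh]}_{ext}$; this is also why the source estimate is split into the two separate claims \eqref{int-source} and \eqref{out-source}. With that correction, your plan coincides with the paper's argument.
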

The content of Remark \ref{r:remark} remains valid in the context of the above proposition, i.e. we do not distinguish between space and time derivatives in the choice of our vector fields. Observe also that the product $\ZZ^\gamma$ with $|\gamma|\le 2h$ can at most contain two vector fields $Z$ of the family \eqref{Z}.

\begin{proof}
As a preliminary step, we remark that our initial data energy
$E^{[2\hh]}(u,v)(0)$ is equivalent to the square of the norm 
in \eqref{small-data},
\begin{equation}
 E^{[2\hh]}(u,v)(0) \approx \| (u,v)[0]\|_{\mathcal H^{2\hh}}^2 + \| x \partial_x (u,v)[0]\|_{\mathcal H^{\hh}}^2
+ \| x^2 \partial_x^2 (u,v)[0]\|_{\mathcal H^0}^2.
\end{equation}
This is a straightforward elliptic computation 
which is left for the reader. We will take advantage of this observation in order to simplify the analysis in the exterior region $C^{out}$. Indeed, in this region we can directly apply the result of Proposition~\ref{p:out-boot} to obtain the desired bounds on the solution, and we can dispense with the vector field analysis. Precisely, we conclude that we have the exterior fixed
time uniform estimate 
\begin{equation}\label{ext-bd}
E^{[2h]}_{ext}(u,v)(t) \lesssim E^{[2\hh]}(u,v)(0),
\end{equation}
where
\[
E^{[2h]}_{ext}(u,v)(t) = \| (u,v)[t]\|_{\mathcal H^{2\hh}(C^{out})}^2 + \| x \partial_x (u,v)[t]\|_{\mathcal H^{\hh}(C^{out})}^2
+ \| x^2 \partial_x^2 (u,v)[t]\|_{\mathcal H^0(C^{out})}^2.
\]

This bound is stronger than the needed one in \eqref{d2-pre} in the exterior region in two ways: (i) it does not have the $t^{\tilde{C}\epsilon}$ loss, and (ii) it applies to all vector fields 
of size $|x|$ rather than only the $Z$ vector fields.

After these preliminaries, we turn our attention to the evolution of the full vector field energies of $(u,v)$. We begin with several reductions which follow the pattern of previous sections. First we recall that our problem is quasilinear and the energy that can be propagated 
for the derivatives of $(u,v)$ is the quasilinear energy. So denoting
\begin{equation}
E^{[2\hh]}_{quasi}(u,v):=\sum_{|\gamma| \leq 2\hh} 
E^{[2\hh]}_{quasi} (t; \ZZ^\gamma u, \ZZ^\gamma v)
\end{equation}
we will replace the bound \eqref{d2-pre} with the equivalent bound
\begin{equation}
\label{d2}
E^{[2\hh]}_{quasi} (v, u)(t)\lesssim t^{\tilde{C}\epsilon}E^{[2\hh]}_{quasi}(v,u)(0), \quad t\in [0,T'].
\end{equation} 
As before, this reduces to a bound on a dyadic time interval
\begin{equation}
\label{d2-dyadic}
E^{[2\hh]}_{quasi}(u,v)(t)\leq (1+ \tilde C\epsilon) E^{[2\hh]}_{quasi}(u,v)(T), \quad t\in [T,2T]\subset [0, T'].
\end{equation} 
Applying Corollary~\ref{c:nonhlinear}, bound \eqref{d2-dyadic} in turn would follow from a bound for the source terms in the equation \eqref{ZWKG} in the time interval $[T,2T]$,
which we separate into an interior and an exterior part:
\begin{equation}\label{int-source}
\| (\mathbf{F}^\gamma,\mathbf{G}^\gamma) \|_{L^1_t L^2_x(C_T^{int})} \lesssim C \epsilon \| \ZZ^{\leq 2\hh} (u ,v) \|_{X^T},\qquad |\gamma| \leq 2\hh,
\end{equation}
respectively
\begin{equation} \label{out-source}
\| (\mathbf{F}^\gamma,\mathbf{G}^\gamma) \|_{L^1_t L^2_x(C_T^{out})} \lesssim C \epsilon
\| \ZZ^{\leq 2\hh} (u ,v)[0] \|_{\H^0}, \qquad |\gamma| \leq 2\hh .
\end{equation}
 This separation is convenient here because in the exterior region
we have access to the stronger bounds in \eqref{ext-bd} which
simplify matters somewhat. A similar separation could have been implemented in the previous two sections, but there it would have made less
of a difference.

Since $(u,v)$ play symmetric roles in this analysis, we will use 
the notations $w$ and $\ww$ for either $u$ or $v$.
Then we need to estimate 
\[
\| \mathbf{N}(w^{\gamma_1}, \partial \ww^{\gamma_2})\|_{L^1 _tL^2_x} 
\lesssim C \epsilon \|  \ZZ^{\leq 2\hh} (u,v) \|_{X^T},
\]
where $\gamma_1$ and $\gamma_2$ are restricted to the range
\[
|\gamma_1|+|\gamma_2| \leq 2\hh, \qquad |\gamma_1|, |\gamma_2| < 2\hh.
\]
Setting $\gamma_i = (\alpha_i,\beta_i)$, we distinguish three cases:
\begin{itemize}
\item[\textbf{I})] $ \beta_1 = \beta_2 = 0$.  
\item[\textbf{II})] $|\beta_1| = 0$, $|\beta_2| = 1$ or viceversa.
\item[\textbf{III})] $|\beta_1| = |\beta_2| = 1$.
\end{itemize}

The case \textbf{I)}  was already considered in Section \ref{s:higher}. For the case \textbf{II)} we enumerate 
the possibilities:
\[
\mathbf{N}( \partial^{n_1} w, \partial^{n_2} Z \ww) \qquad 1\leq  n_1+ n_2 \leq \hh+1, \ n_2 \leq \hh.
\]
The case $n_1 = 0$ may occur here only if we started with exactly $Z^2$ and one $Z$ was commuted. In this case we get the terms:
\[
\mathbf{N}( w, \partial Z \ww)  
\]
which have not been covered yet. We will still discard this case because it is similar but 
simpler than case \eqref{CZZ} below.
The case $n_1 = 1$ is also identical to the estimates in Section \ref{s:higher}, using directly the 
bootstrap assumptions for the second order derivatives for $u$ and $v$, so we can also discard it.  Thus 
we are left with
\begin{equation}\label{C22}
\mathbf{N}(\partial^{n_1} w, \partial^{n_2} Z\ww)
\qquad 2 \leq n_1 \leq \hh, \ 1\leq n_1+n_2 \leq \hh+1,
\end{equation}
\begin{equation}\label{C40}
\mathbf{N}(\partial^{\hh+1} w,  Z\ww).
\end{equation}

Finally, in case  \textbf{III)} the only terms to consider are
\begin{equation}\label{CZZ}
\mathbf{N}(Zw,\partial Z\ww).
\end{equation}

To bound each of these source terms we follow the same outline as the proofs of the corresponding results discussed in Sections~\ref{s:w-out}, \ref{s:linearization} and \ref{s:higher}. One minor difference is that we will separate the exterior region $C_T^{out}$ from $C_T^{in}$. 
For the main region $C_T^{int}$ we 
refine the analysis even further and prove estimates on the space-time regions $C_{TS}^\pm$. In all cases the dyadic summation in $S$ will be 
trivial. We will repeatedly use the following interpolation Lemma:
\begin{lemma} Assume that $n\geq 0$ and 
\[
\frac{2}{p}=\frac{1}{2}+\frac{1}{q}, \qquad 2\leq q\leq \infty.
\]
Then we have
\begin{equation}
\label{interpolare}
\Vert  \partial^{n+1} Z \phi \Vert_{L^{p}(C_{TS})} \lesssim \|  Z^{\leq 2} \phi  \|_{L^2(C_{TS})}^\frac{1}{2}\left(\| \partial^{\leq 2n} \partial^2 \phi \|^{\frac{1}{2}}_{L^{q}(C_{TS})} + S^{-\frac12}\| \partial \phi \|^{\frac{1}{2}}_{L^{q}(C_{TS})}\right) . 
\end{equation}
The same holds in $C_T^{int}$.
\end{lemma}
\begin{proof} 
Using hyperbolic polar coordinates adapted to $C_{TS}$ (see the next section) this lemma reduces to 
variants of the classical Gagliardo-Nirenberg inequality in a unit sized domain. The details of the proof are included in the appendix.
\end{proof}

\subsection*{A. The null form \texorpdfstring{\eqref{CZZ}}{}.} Here we consider the terms in \eqref{CZZ} and separate the exterior region $C_T^{out}$ and the dyadic interior regions $C_{TS}$. In the latter case, we will not
differentiate between $C_{TS}^+$ and $C_{TS}^-$.

\medskip 

\noindent a) The exterior region $C_T^{out}$. Here we prove the bound
\eqref{out-source} in $C_T^{out}$. 
We neglect the null structure
as well as the vector field structure so that we are left with the
fixed time bound
\[
\| x^2 \partial^2 w  \partial^3 \ww\|_{L^2_x} \lesssim \epsilon T^{-1} (E^{[2h]}_{ext}(\ww))^\frac12.
\]
But this is straightforward as for the first factor we can use 
the $\epsilon r^{-1}$ bound in our bootstrap assumption, and for the second we use the $x^{-2} L^2$ bound in the above exterior norm.

\medskip

\noindent b) The interior region $C_T^{int}$, $v$-terms. Here we consider the terms \eqref{CZZ} for $w=\ww=v$. This is the simpler case and it is instructive to consider it first.  We are considering expressions
of the form
\begin{align}
&\mathbf{N}(Z v, \partial Z v),
\label{vZ-2terms} 
\end{align}
and we want to bound them in $L^1_tL^2_x(C^{int}_{T})$. We actually estimate them in $L^2_t L^2_x(C^{int}_T)$ using that
\[
\Vert \mathbf{N}(Z v, \partial Z v) \Vert_{L^1_t L^2_x (C^{int}_T)}\lesssim T^{\frac{1}{2}} \Vert \mathbf{N}(Z v, \partial Z v) \Vert_{L^2_t L^2_x (C^{int}_T)}.
\]
We neglect the 
null condition and will place
both factors in $L^4(C^{int}_T)$
\[
\Vert \mathbf{N}(Z v, \partial Z v)  \Vert_{L^2(C^{int}_T)} \lesssim 
\Vert \partial Z v \Vert_{L^4(C^{int}_T)} \Vert \partial^2 Z v\Vert_{L^4(C^{int}_T)} ,
\]
where the two terms are estimated using interpolation inequalities as follows:
\begin{equation}
\label{interp1}
\Vert  \partial^2 Z v\Vert_{L^4(C_T^{int})} \lesssim \|  Z^{\leq 2} \partial v \|_{L^2(C_T^{int})}^\frac12 \| \partial^{\leq 2} \partial v \|^{\frac{1}{2}}_{L^\infty(C_T^{int})} ,
\end{equation}
respectively
\begin{equation}
\label{interp2}
\Vert  \partial Z v\Vert_{L^4(C_T^{int})} \lesssim \|  Z^{\leq 2} \partial v \|_{L^2(C_T^{int})}^\frac12 \|\partial  v \|^{\frac{1}{2}}_{L^\infty(C_T^{int})} .
\end{equation}
Here we can use slightly larger sets for the interpolation on the 
right which allows us to localize the estimates.
Combining the two and using our bootstrap assumption we obtain
\[
\Vert \mathbf{N}(Z v, \partial Z v)  \Vert_{L^2_t L^2_x(C_T^{int})} \lesssim C \epsilon T^{-\frac{1}{2}} \Vert Z^{\leq 2} v\Vert_{X^T}
\]
as needed.

\medskip

\noindent c) The $C_{T}^{int}$ region, the $u$ terms. Here we consider 
the nonlinear term \eqref{CZZ} in the case where $w = \ww = u$
\begin{align}
&\mathbf{N}(Z u, \partial Z u),
\label{uZ-2terms} 
\end{align}
which we want to bound them in $L^1_tL^2_x(C^{int}_{T})$. We consider 
separately each of the $C_{TS}$ regions.
This is simpler if $S \approx T$, as the null condition is not needed there and the argument in case (b) applies.

It remains to separately consider the regions $C_{TS}$ with  $1 \leq  S\ll T$, in which case we use the null form structure via the representation \eqref{rewrite1} 
 \begin{equation}
 \label{greu}
\mathbf{N}(Zu, \partial Z u) =   \partial Zu \cdot \mathcal{T} \partial Z u+ \mathcal{T}Zu\cdot  \partial^2 Z u.
 \end{equation}

Again we interpolate between $L^2$ and $L^\infty$ to get $L^4$
but this time in $C_{TS}$. This is acceptable because the $Z$
vector fields are well adapted to $C_{TS}$. We harmlessly commute 
$\Tau$ and $\partial$ with $Z$ at the expense of much better error terms. The bootstrap assumptions \eqref{boot1.1} and \eqref{boot2.1} yield that
\[
|\Tau u|\lesssim \frac{1}{t}|Zu| + \frac{t-r}{t}|\partial u |\lesssim C\epsilon t^{-1}\langle t-r\rangle^\delta
\]
hence for the second term in \eqref{greu} we have
\[
 \begin{aligned}
 \Vert Z \Tau u \cdot \partial^2 Z u  \Vert_{L^2(C_{TS})} &\lesssim  \Vert  Z \Tau  u  \Vert_{L^4(C_{TS})}\Vert \partial^2 Z u \Vert_{L^4(C_{TS})}\\
 &\lesssim  \|  Z^{\leq 2} \Tau u \|_{L^2(C_{TS})}^\frac12 \|   \Tau u \|^{\frac{1}{2}}_{L^\infty} \| Z^{\leq 2}  \partial u \|_{L^2(C_{TS})}^\frac12 (\|  \partial^3  u \|^{\frac{1}{2}}_{L^\infty}
 + S^{-\frac12} \|  \partial  u \|^{\frac{1}{2}}_{L^\infty})
 \\
 &\lesssim \epsilon C  S^{\frac{1}{4}} T^{-\frac{1}{4}} S^{-\frac{1}{4}-\frac12(\delta_1-\delta)} T^{-\frac{1}{4}}\Vert Z^{\leq 2} (u,v)\Vert_{X^T},
 \end{aligned}
\]
and the $S$ summation is straightforward since $\delta\ll \delta_1$.
For the first term in \eqref{greu} we have
\[
\begin{aligned}
\Vert  Z \partial u \cdot \partial Z  \Tau u \Vert_{L^2(C_{TS})} & \lesssim \Vert  Z \partial u\Vert_{L^4(C_{TS})} \Vert\partial Z  \Tau u \Vert_{L^4(C_{TS})} \\
& \lesssim \Vert Z^{\le 2}\partial u\Vert^{\frac{1}{2}}_{L^2(C_{TS})}\Vert \partial u\Vert^{\frac{1}{2}}_{L^\infty(C_{TS})} \Vert Z^{\le 2} \Tau u \Vert^{\frac{1}{2}}_{L^2(C_{TS})} \\
&   \qquad \qquad \qquad \qquad \cdot\left(\Vert \partial^{2} \Tau u \Vert^{\frac{1}{2}}_{L^\infty(C_{TS})} + S^{-\frac12} \Vert \partial \Tau u \Vert^{\frac{1}{2}}_{L^\infty(C_{TS})}\right) \\
& \lesssim C\epsilon T^{\frac{1}{4}}S^{\frac{1}{4}}T^{-\frac{
1}{4}}S^{-\frac{1}{4}} T^{-\frac{1}{2}}S^{-\frac{\delta_1}{2}}\Vert Z^{\leq 2} (u,v)\Vert_{X^T} \\
&\lesssim C\epsilon T^{-\frac{1}{2}}S^{-\frac{\delta_1}{2}} \Vert Z^{\leq 2} (u,v)\Vert_{X^T},
\end{aligned}
\]
where for the last factors we used the bootstrap assumptions \eqref{boot1.2} and \eqref{boot2} which imply 
\[
|\partial^j \Tau u|\lesssim \frac{1}{t}|\partial^j Zu| + \frac{t-r}{t}|\partial^{j+1}u|\lesssim C\epsilon t^{-1}\langle t-r\rangle^{-\delta}, \quad j=\overline{1,2}
\]

\noindent (d) The $C^{int}_T$ region, the mixed $u, v$ terms. Consider first the quadratic term
\[
\textbf{N}(Zu, \partial Zv)
\]
which we need to estimate separately in each $C_{TS}$ region. Again, we only consider the case $1\le S\ll T$, the one where $S \approx T$ being simpler, and use the null structure via the representation \eqref{rewrite2}
\[
\mathbf{N}(Zu, \partial Z v) =   \partial Zu \cdot \mathcal{T} \partial Z v+ \mathcal{T}Zu\cdot  \partial^2 Z v.
\]
For the  first term we use interpolation inequalities 
and our bootstrap assumptions \eqref{boot2.1} and \eqref{boot3} to obtain
\[
\begin{aligned}
\Vert   Z \partial u \cdot  \partial Z \mathcal{T} v \Vert_{L^2(C_{TS})} & \lesssim \Vert  Z \partial  u \Vert_{L^4(C_{TS})} \Vert \partial Z \mathcal{T} v \Vert_{L^4(C_{TS})} \\
& \lesssim \Vert Z^{\le 2}\partial u\Vert_{L^2(C_{TS})}^{\frac{1}{2}}\Vert \partial u\Vert^{\frac{1}{2}}_{L^\infty(C_{TS})}\Vert Z^{\le 2}\Tau v\Vert^{\frac{1}{2}}_{L^2(C_{TS})} \\
& \qquad \qquad \qquad \qquad \qquad \cdot \left(\Vert \partial^2 \Tau v\Vert^{\frac{1}{2}}_{L^\infty(C_{TS})}+ S^{-\frac12}\Vert \partial \Tau v\Vert^{\frac{1}{2}}_{L^\infty(C_{TS})}\right) \\
& \lesssim C\epsilon T^{\frac{1}{4}}T^{-\frac{1}{4}}S^{-\frac{1}{4}}S^{\frac{1}{4}}T^{-\frac{1}{2}-\frac{\delta_1}{2}}S^{\frac{\delta_1}{2}} \Vert Z^{\leq 2} (u,v)\Vert_{X^T} \\
& \lesssim C\epsilon T^{-\frac{1}{2}-\frac{\delta_1}{2}}S^{\frac{\delta_1}{2}} \Vert Z^{\leq 2} (u,v)\Vert_{X^T},
\end{aligned}
\]
where the $S$ summation is straightforward.
The second quadratic term is similar.

The other mixed quadratic term to consider in this scenario is
\[
\mathbf{N}(Zv, \partial Zu)
\]
which is easy to treat as shown below
\[
\begin{aligned}
\Vert \mathbf{N}(Zv, \partial Zu) \Vert_{L^2(C_{TS})}  & \lesssim \Vert \partial Zv \cdot \partial^2 Zu\Vert_{L^2(C_{TS})} \lesssim \Vert \partial Zv\Vert_{L^4(C_{TS})} \Vert  \partial^2 Zu\Vert_{L^4(C_{TS})} \\
& \lesssim \Vert Z^{\le 2} v\Vert^\frac{1}{2}_{L^2(C_{TS})} (\Vert \partial^2 v\Vert^\frac{1}{2}_{L^\infty(C_{TS})}
+ S^{-\frac12}\Vert \partial v\Vert^\frac{1}{2}_{L^\infty(C_{TS})})
\Vert Z^{\le 2}\partial u\Vert^\frac{1}{2}_{L^2(C_{TS})} \\
&  \qquad \qquad  \qquad \qquad \qquad \qquad \qquad \cdot \left(\Vert \partial^3 u\Vert^\frac{1}{2}_{L^2(C_{TS})}
+ S^{-\frac12}\Vert  \partial u\Vert^\frac{1}{2}_{L^2(C_{TS})}\right)
\\
& \lesssim C\epsilon S^\frac{1}{4}T^{-\frac{1}{2}} T^\frac{1}{4} T^{-\frac{1}{4}}S^{-\frac{1}{4}-\frac{\delta_1}{2}} \Vert Z^{\leq 2} (u,v)\Vert_{X^T} \\
& \lesssim C\epsilon T^{-\frac{1}{2}}S^{-\frac{\delta_1}{2}} \Vert Z^{\leq 2} (u,v)\Vert_{X^T}.
\end{aligned}
\]

\medskip

\subsection*{B. The null form  \texorpdfstring{\eqref{C40}}{}}
The arguments here are similar to the ones above.
In the exterior region $C_T^{ext}$ we have \[
|\mathbf{N}(\partial^{\hh+1} w, Z \ww)| \lesssim |x| |\partial^{h+2} \w|
|\partial^2 \ww| ,
\]
and we can bound the first factor in $L^2$,
\[
\||x| \partial^{h+2} w\|_{L^2(C_T^{out})} \lesssim (E^{[2h]}_{ext}(w))^\frac12, 
\]
and the second factor pointwise by $C \epsilon |x|^{-1}$.

The argument in the region $C^{int}_{T}$ is also similar. The bounds for $\mathbf{N}( \partial^{h+1}w,Z\ww )$ in each  $C_{TS}$ are obtained in the same way as the bounds for $\mathbf{N}(Zw, \partial Z\ww)$, 
 replacing the inequalities for $\partial Zw$ and $\Tau Zw$
with
the following interpolation inequality 
\begin{equation}
\label{estC40}
\| \partial^{\hh+1} \psi\|_{L^4} \lesssim \| \partial^{ \leq 2\hh-2} \partial^2 \psi\|_{L^2}^{\frac{1}{2}} \| \partial^2  \psi\|_{L^\infty}^{\frac{1}{2}}, \quad \psi= \partial w, \Tau w
\end{equation}
which holds in each region  $C_{TS}$.

\medskip

\subsection*{C. The null form  \texorpdfstring{\eqref{C22}}{}} 
In this case we follow the same strategy as above, but with the
difference that we can no longer rely on $L^4$ interpolation and
instead we must use other $L^p$ norms.  To fix the notations we simply
consider the worst case when $n_1+n_2=\hh+1$.

We start with the exterior region, where 
we estimate the terms in \eqref{C22} as follows:
\[
|\mathbf{N}(\partial^{n_1} w, \partial^{n_2} Z\ww)| \lesssim |x| |\partial^{n_1+1} w| |\partial^{n_2+2} \ww| .
\]
 We chose exponents $p_1$, $p_2$ so that
\[
p_1=\frac{2(\hh -1)}{n_1-2}, \quad p_{2}=\frac{2(\hh -1)}{n_2}, \quad \frac{1}{p_1}+\frac{1}{p_2}=\frac{1}{2},
\]
and will place  the two factors in $L^{p_1}$ respectively in $L^{p_2}$. At a fixed time $t \in [T,2T]$ we 
use Holder's inequality and interpolation to get the exterior bound
\[
\begin{aligned}
&\| |x| \mathbf{N}(\partial^{n_1} w, \partial^{n_2} Z\ww)\|_{L^2(C_t^{out})} \lesssim \Vert |x| \partial^{n_1+1} w\Vert_{L^{p_1}(C_t^{out})}\Vert  \partial^{n_2+2} \ww \Vert_{L^{p_2}(C_t^{out})}
\\
&\qquad  \lesssim \Vert |x| \partial^{2} w\Vert^{\frac{n_2}{h-1}}_{L^{\infty}(C_t^{out})}\Vert  \vert x \vert \partial^{\leq \hh}  \partial^2 w \Vert^{\frac{n_1-2}{h-1}}_{L^{2}(C_t^{out})} \Vert \partial^2 \ww\Vert^{\frac{n_1-2}{h-1}}_{L^\infty(C^{out}_T)}\Vert \partial^{\le h}\partial^2 \ww \Vert^{\frac{n_2}{h-1}}_{L^2(C^{out}_T)},
\end{aligned}
\]
where the $L^{\infty}$ norms are estimated  using the bootstrap assumption and the $L^2$ norms are estimated using  the outer energy bound, in particular that
\[
\Vert \partial^{\le h}\partial^2 \ww \Vert_{L^2(C^{out}_T)}\lesssim T^{-1}(E^{[2h]}_{ext}(\ww))^\frac12 .
\]

For the interior region $C_{T}^{int}$ we consider separately the sets $C_{TS}$ as before and use 
the null form representation \eqref{rewrite1} in the case where $(w, \ww)=\{(u,u), (u,v), (v,u)\}$. Then we need to estimate the expressions
\begin{equation}
 \label{greu1}
\mathbf{N}(\partial^{n_1} w, \partial^{n_2} Z \ww) =   \partial^{n_1+1}  w \cdot \mathcal{T} \partial^{n_2} Z \ww
+ \mathcal{T} \partial^{n_1} \ww\cdot  \partial^{n_2+1} Z \ww
 \end{equation}
in $L^2(C_{TS})$ for $1 \leq S \lesssim T$.
When $n_2=0$ we use again the $L^4-L^4$ interpolation argument, so we focus here on the case $n_2\ge 1$.
Both terms are treated in a similar manner. For simplicity 
we consider the latter one, in order to facilitate comparison with case (a). We begin with two exponents $p_1$
and $p_2$ given by 
\[
p_1=\frac{4(\hh -1)}{n_1-1}, \quad p_{2}=\frac{4(\hh -1)}{\hh+n_2-2} , \quad \frac{1}{p_1} + \frac{1}{p_2} = \frac12 ,
\]
and estimate
\[
\| \mathcal{T} \partial^{n_1} w \cdot   \partial^{n_2+1} Z  \ww \|_{L^2(C_{TS})} \lesssim 
\| \mathcal{T} \partial^{n_1} w \|_{L^{p_1}(C_{TS})} \|   \partial^{n_2+1} Z  \ww \|_{L^{p_2}(C_{TS})},
\]
where the two terms are estimated using interpolation inequalities as follows:
\begin{equation}
\label{interp11}
\Vert  \Tau \partial^{n_1} w\Vert_{L^{p_1}(C_{TS})} \lesssim \| \partial^{\leq 2\hh-2} \Tau \partial w \|_{L^2(C_{TS})}^\frac{2}{p_1} \| \Tau \partial w \|^{1- \frac{2}{p_1}}_{L^\infty (C_{TS})} ,
\end{equation}
respectively 
\begin{equation}
\label{interp22}
\Vert  \partial^{n_2+1} Z \ww\Vert_{L^{p_2}(C_{TS})} \lesssim \|  Z^{\leq 2} \partial \ww \|_{L^2(C_{TS})}^\frac{1}{2} 
(\| \partial^{\leq 2(n_2-1)}  \partial^3  \ww \|^{\frac{1}{2}}_{L^{p_3}(C_{TS})} +
S^{-\frac12}\|  \partial^2  \ww \|^{\frac{1}{2}}_{L^{p_3}(C_{TS})} ) ,
\end{equation}
where $p_3$ is given by
\[
\frac{1}{p_3}+\frac{1}{2}=\frac{2}{p_2}.
\]
Finally the last terms in \eqref{interp22} are interpolated again as
\[
 \| \partial^{\leq 2(n_2-1)} \partial^2  \ww \|_{L^{p_3}(C_{TS})} \lesssim \| \partial^{\leq 2(\hh-1)} \partial^3  \ww \|_{L^2(C_{TS})}^{\frac{4}{p_2}-1} \|  \partial^3 \ww \|^{2-\frac{4}{p_2}}_{L^{\infty}(C_{TS})},
\]
\[
\|\partial^2\ww\|_{L^{p_3}(C_{TS})}\lesssim \|\partial^2 w\|_{L^2(C_{TS})}^{\frac{4}{p_2}-1}\|\partial^2\ww\|_{L^{\infty}(C_{TS})}^{2-\frac4{p_2}}.
\]
Combining the last five relations and using our
bootstrap assumptions we obtain 
\[
\| \mathcal{T} \partial^{n_1} w \cdot   \partial^{n_2+1} Z  \ww \|_{L^2(C_{TS})}  \lesssim C \epsilon T^{-\frac{1}{2}} S^{-{c\delta_1}}
 \Vert Z^{\leq 2} \ww \Vert_{X^T}^\frac12  \Vert \partial^{\leq 2\hh} (w,\omega) \Vert_{X^T}^\frac12
\]
as needed, where 
\[
c = 
\begin{cases}
1, \quad & (w,\ww) = (u,u)\\ 
\frac{2}{p_2}, \quad & (w, \ww) = (u,v)\\
\frac{2}{p_1}, \quad & (w, \ww) = (v,u).
\end{cases}
\]

The term $\mathbf{N}(\partial^{n_1}v, \partial^{n_2}Zv)$, on the other hand,  does not use the null structure and can be bounded using the same interpolation argument as the one employed above for the product $ \mathcal{T} \partial^{n_1} w \cdot   \partial^{n_2+1} Z $ but carried in the whole interior region $C^{int}_T$ rather than in each $C_{TS}$. From the pointwise bound \eqref{boot3} we get
\[
\begin{aligned}
& \left\| \mathbf{N}(\partial^{n_1}v, \partial^{n_2}Zv) \right\|_{L^2(C^{int}_T)} \lesssim \left\| \partial^{n_1+1}v\cdot \partial^{n_2+1}Zv\right\|_{L^2(C^{int}_T)} \\
& \hspace{10pt} \lesssim \left\|\partial^{\le 2(h-1)}\partial^2 v \right\|^{\frac{2}{p_1}}_{L^2(C^{int}_T)} \left\| \partial^2 v\right\|^{1-\frac{2}{p_1}}_{L^\infty(C^{int}_T)}\left\| Z^{\le 2}\partial v\right\|^\frac{1}{2}_{L^2(C^{int}_T)} \left\|\partial^{\le 2(h-1)}\partial^3 v \right\|^{\frac{2}{p_2}-\frac{1}{2}}_{L^2(C^{int}_T)} \left\| \partial^3v\right\|^{1-\frac{2}{p_2}}_{L^\infty(C^{int}_T)} \\
& \hspace{10pt}\lesssim \epsilon T^{-1} \left\|\partial^{\le 2h}v \right\|^\frac{1}{2}_{X^T} \left\| Z^{\le 2}v\right\|^{\frac{1}{2}}_{X^T}.
\end{aligned}
\]

\end{proof}

Now we are able to finish the proof of Proposition~\ref{p:energy}. The
proof of Proposition \ref{p:vectors} already gives us the $X^T$ bound
of $(u,v)$. It remains to consider the $Y^T$ bound.  The $Y^T$ bounds
without any $Z$ vector fields were already discussed in Section
\ref{s:higher} so we are left with the single $Y^T$ bound that
involves the $Z$ vector field
\begin{equation}
\label{impyt}
\Vert Z(\Box u, (\Box +1)v)\Vert_{Y^T}\lesssim \epsilon CT^{\epsilon \tilde{C}}.
\end{equation}
This is the same as
\begin{equation}
\label{impyt-re}
\Vert \mathbf{N}(Zw, \partial \ww)\Vert_{Y^T} + \Vert \mathbf{N}(w, Z\partial \ww)\Vert_{Y^T} 
\lesssim \epsilon CT^{\epsilon \tilde{C}}.
\end{equation}
These bounds have already been proved in the proof of Proposition
\ref{p:vectors} above in the worst case scenario, which is that of the
$u$-terms, see \eqref{rewrite2}.

\section{Klainerman-Sobolev inequalities}
\label{s:ks}

To recover the bootstrap bounds \eqref{boot1.1} to \eqref{boot3} on the almost global time scale we 
need appropriate Klainerman-Sobolev inequalities, where the aim is to obtain pointwise bounds from the integral bounds. 
Our main result here is a linear result. Unfortunately  we cannot work at fixed time, so instead we work in a dyadic 
time region $C_T$ with $T \geq 1$. The pointwise bounds in the exterior region $C_t^{out}$ were already established in Proposition~\ref{p:out}, so here it remains to concentrate on the region $C_T^{in}$.

\begin{theorem}
\label{p:vreau}
Let $h\geq 8$. Assume that the functions $(u,v)$ in $C^{in}_T$ satisfy the bounds 
\begin{equation}
\label{Zineq3}
\| \ZZ^\gamma (u,v)\|_{X^T} \leq 1, \qquad |\gamma| \leq 2\hh ,
\end{equation}
as well as 
\begin{equation}
\label{boxZineq3}
\|\ZZ^\gamma (\Box u, (\Box+1)v)\|_{Y^T} \leq 1, \qquad |\gamma| \leq  \hh .
\end{equation}
Then they also satisfy the pointwise bounds
\begin{equation}
\label{boot11bis}
\vert Z u \vert  \lesssim 1,
\end{equation}
\begin{equation}
\label{boot11}
| Z \partial^j u| \lesssim  \langle t-r \rangle^{-\delta} \qquad j = \overline{1,2},
\end{equation}
\begin{equation}
\label{boot12bis}
\vert \partial u \vert  \lesssim \langle t\rangle^{-\frac{1}{2}} \langle t-r \rangle ^{-\frac{1}{2}},
\end{equation}
\begin{equation}
\label{boot12}
\vert \partial^{j} u \vert  \lesssim \langle t \rangle ^{-\frac{1}{2}} \langle t-r \rangle ^{-\frac{1}{2}-\delta}, \qquad  j = \overline{2,3}
\end{equation}
\begin{equation}
\label{boot13}
 \vert \partial^j v\vert  \lesssim \langle t \rangle ^{-1-\delta} \langle t-r\rangle^{\delta}, \quad j = \overline{0,3}.
\end{equation}
\end{theorem}
Here $\delta > 0$ is a fixed small constant.
This suffices to prove the almost global well-posedness result. 
The remainder of this Section is devoted to the proof of Theorem~\ref{p:vreau}.

\bigskip

To prove the theorem we  divide the forward half-space $\mathbb{R}^+\times \mathbb{R}^2$
in two regions: $\mathcal{C}^+$ inside the cone, and  $\mathcal{C}^{-}$ outside the cone, and solve the problem separately in each of the regions.  
Here we will allow for a small ambiguity in that the region at distance at most one from the cone can be treated both ways; this corresponds to the fact that the bulk of the $X^T$ norm is invariant with respect to unit (space and time) translations. This is related to the fact that the main result of this paper is also invariant with respect to unit translations. To a large extent, we will think of the bounds of this Theorem as consequences of Sobolev type embeddings or Bernstein  type inequalities. Since our vector fields include the $Z$ vector fields, in order to be able to interpret the bounds in Theorem \ref{p:vreau} we need to work in coordinates which are adapted to the vector fields $Z$. In practice, this means working in hyperbolic coordinates, both in $\mathcal{C}^+$ and in $\mathcal{C}^{-}$. Because of this, in what follows we first introduce the hyperbolic coordinates inside the cone and prove our bounds there, and then introduce the hyperbolic coordinates for the $\mathcal{C}^{-}$ region and again prove the corresponding estimates. Fortunately there will be many similarities between the two regions and some proofs will actually be completely identical.

\subsection{Normalized coordinates inside the cone} 

For the analysis inside the cone it is very convenient to work in the so called \emph{spherical hyperbolic coordinates} in $\mathbb{H}^2\times \mathbb{R}$:
\begin{equation}
\label{sphericalc}
\left\{
\begin{aligned}
&t= e^{\sigma}\cosh (\phi),\\
&x_1=e^{\sigma}\sinh (\phi) \sin (\theta), \\
&x_2=e^{\sigma}\sinh (\phi) \cos(\theta),
\end{aligned}
\right.
\end{equation} 
where $\theta$ and $\phi$ are the polar coordinates in the hyperbolic
space; $\phi$ measures the distance from a point on the hyperboloid to
the origin $(1, 0, 0)$ (the origin is also called the \emph{pole}),
and $\theta$ is the angle from a reference direction. Finally,
$\sigma$ represents the time in the hyperbolic coordinates.  The
Jacobian of the change of variable is
\[
J(\sigma, \phi, \theta) =e^{3\sigma}\sinh(\phi).
\]
We will also need to express the wave operator into this new variables, as it will later become important in our analysis
\begin{equation}
\label{box}
-\Box =e^{-2\sigma}\left( -\partial_{\sigma}^{2}+ \partial_{\phi}^2 +\frac{1}{\sinh ^2(\phi)}\partial_{\theta}^2-\partial_{\sigma} +\frac{\cosh(\phi)}{\sinh (\phi)}\partial_{\phi}\right).
\end{equation}
One can verify very easily the formula in \eqref{box} as well as the following correspondence in between the derivatives relative to the Euclidean or to the hyperbolic coordinates:
\begin{equation}
\label{deriv1}
\left\{
\begin{aligned}
&\partial_{\sigma}= t\partial_t+x_1\partial_{x_1} +x_2\partial_{x_2}=t\partial_t+r\partial_r,\\
&\partial_{\phi} =r\partial_t+\frac{t}{r} \left[  x_1\partial_{x_1} +x_2\partial_{x_2}\right]=r\partial_t+t\partial_r,\\
&\partial_{\theta} =-x_2\partial_{x_1}+x_1\partial_{x_2},
\end{aligned}
\right.
\end{equation}
and respectively
\begin{equation}
\left\{
\begin{aligned}
&\partial_t=e^{-\sigma} \cosh (\phi) \partial_{\sigma}-e^{-\sigma} \sinh (\phi) \partial_{\phi},\\
&\partial_{x_1}= -e^{-\sigma}\sinh (\phi) \sin (\theta)\partial_{\sigma} +e^{-\sigma} \cosh (\phi) \sin (\theta) \partial_{\phi} +\frac{e^{-\sigma} \cos (\theta)}{ \sinh (\phi)} \partial_{\theta},\\
&\partial_{x_2}= -e^{-\sigma}\sinh (\phi) \cos (\theta)\partial_{\sigma} +e^{-\sigma} \cosh (\phi) \cos (\theta) \partial_{\phi} - \frac{e^{-\sigma} \sin (\theta)}{ \sinh (\phi)} \partial_{\theta}.
\end{aligned}
\right.
\end{equation}
Once in these coordinates, the regions $C^+_{TS} $ become essentially rectangular regions of size $1$. Precisely, in spherical hyperbolic coordinates the regions $C^+_{TS}$ are represented as follows
\[
C^+_{TS} \quad \longrightarrow \quad D:= \left\{ (\sigma, \phi, \theta )\; : \; (\sigma, \phi, \theta)\in  I_{\sigma}\times I_{\phi} \times [0, 2\pi]\right\},
\]
where $I_\sigma$ and $I_\phi$ are intervals of size $1$. As discussed earlier we are assuming we are at distance at least one from the cone, which corresponds to $S\geq 1$. Furthermore, in the region $C^+_{TS}$ we have 
\begin{equation}
\label{relations cts-phi}
e^{2\sigma}=t^2-\vert x\vert ^2\approx ST, \qquad e^{\sigma}\cosh (\phi) \approx T, \qquad J\approx\footnote{At least when $\phi >1$ which corresponds to $S\ll T$. A small variation of this computation is needed in the interior region $C_{TT}^+$ .} ST^2 .
\end{equation}

\noindent \textbf{Observation:} Please note  the connection between the new coordinates and the vector fields associated to our problem: 
\begin{itemize} 
\item the scaling vector field \eqref{scal} and the derivative with respect to the time-like variable $\sigma$:
\[
\partial_{\sigma}=\mathscr{S}.
\]
\item the rotations $\Omega_{12}$ and $\Omega_{21}$ are nothing more than the derivative in the $\theta$ direction:
\[
\partial_{\theta}=\Omega_{12}=-\Omega_{21}.
\]
\item  the Lorentz boosts $\Omega_{01}$ and $\Omega_{02}$ are closely connected to the derivative in the $\phi$ direction, which we will denoted by $\Omega_{0r}$: 
\[
\begin{aligned}
&\Omega_{0r}:= \partial_{\phi} =r\partial_t+t\partial_r=\frac{x_1}{r}\Omega_{01}+\frac{x_2}{r}\Omega_{02}.
\end{aligned}
\]
A more useful relation is given by
\[
\left\{
\begin{aligned}
& \Omega_{01}=\cos \theta \partial_{\phi}-\sin \theta \frac{\cosh \phi}{\sinh \phi}\partial_{\theta},\\
&\Omega_{02}=\cos \theta \frac{\cosh \phi}{\sinh \phi}\partial_{\theta} + \sin \theta \partial_{\phi}.
\end{aligned}
\right.
\]

\end{itemize}

Many of our estimates involve the regular gradient, which is very simple in the standard Minkowski coordinates where it can be expressed in the basis $\left\{ \partial_t, \partial_{x_1}, \partial_{x_2}\right\}$, but not as simple when expressed in 
spherical hyperbolic coordinates. Because of this it is very useful to have an alternative basis to measure the size of the gradient which is well-adapted to the geometry of the hyperboloids. It is natural to choose two vector fields which are tangent to the hyperboloids, but then it is not obvious how to complete this to a basis with a third vector field. For simplicity, let us restrict ourselves to the region where $\phi \geq 1$. Then two natural vectors which are tangent to the hyperboloids are $\partial_{\phi}$ and $\partial_{\theta}$, both of which have Euclidean length $T$, and we can choose the first two elements of our new basis to be $T^{-1}\partial_{\phi}$ and $T^{-1}\partial_{\theta}$. For the third vector field we cannot chose $\partial_{\sigma}$ because it is too close in direction to $\partial_{\phi}$. Instead we could choose $\partial_r$ which we can rewrite in the form 

\begin{equation}\label{d-equiv}
2\partial_r=-\frac{1}{t-r}\left( \partial_{\sigma}-\partial_{\phi}\right)+  \frac{1}{t+r}\left( \partial_{\sigma}+\partial_{\phi}\right) .
\end{equation}

Thus, in the $C_{TS}^+$ regions we can use the following three vector fields as a substitute for the gradient:
\begin{equation}
\label{cool}
\nabla_{t,x}\approx \left\{ \,  \frac{1}{T}\partial_{\theta}, \  \frac{1}{T}\partial_{\phi}, \ \frac{1}{S}\left( \partial_{\sigma}-\partial_{\phi}\right) \, \right\}.
\end{equation} 
In the region $\phi <1$, which corresponds to $S\approx T$, the matters are simpler because we can simple use $T^{-1}\partial_{\sigma}$ for the third vector field.

\bigskip

From here we split the analysis in two components: one that deals with the Klein-Gordon pointwise estimates and one that establishes the wave pointwise bounds. 

\subsection{Pointwise bounds for the Klein-Gordon component
inside the cone} 
We work in a dyadic region $C_{TS}^+$, which is foliated by hyperboloids $H_\sigma$ with
\[
e^{2\sigma} \approx ST.
\]

We begin by recalling the components of the $X^T$ norms in $C_{TS}^+$. From the localized energies we have
\begin{equation}
\label{kgneed2}  
\Vert \ZZ^\gamma v\Vert_{L^2_tL^2_x} +  \Vert \ZZ^\gamma \Tau v\Vert_{L^2_tL^2_x}  \lesssim S^{\frac{1}{2}}, \qquad |\gamma| \leq 2\hh ,
\end{equation}
and
\begin{equation}
\label{kgneed2a}  
  \Vert \ZZ^\gamma \nabla  v\Vert_{L^2_tL^2_x}  \lesssim T^{\frac{1}{2}}, \qquad |\gamma| \leq 2\hh .
\end{equation}
Correspondingly we have the stronger $L^2$ bounds on the hyperboloids
\begin{equation}
\label{kgneed2+}  
\Vert \ZZ^\gamma v\Vert_{L^2(H)} +  \Vert \ZZ^\gamma \Tau v\Vert_{L^2(H)}  \lesssim 1, \qquad |\gamma| \leq 2\hh ,
\end{equation}
and
\begin{equation}
\label{kgneed2a+}  
  \Vert \ZZ^\gamma \nabla  v\Vert_{L^2(H)}  \lesssim S^{-\frac12} T^{\frac{1}{2}}, \qquad |\gamma| \leq 2\hh .
\end{equation}

On the other hand for $(\Box+1) v$ we have
\begin{equation}
\label{kgneed2a++}  
  \Vert \ZZ^\gamma (\Box+1) v\Vert_{L^2_tL^2_x}  \lesssim  T^{-\frac{1}{2}}, \qquad |\gamma| \leq \hh .
\end{equation}

\medskip

The next step is to translate the above estimates  in the new coordinates. We will use the subscript $h$ to indicate norms evaluated in the spherical hyperbolic coordinates:
\begin{equation}
\label{kgneed22}  
\Vert \ZZ^\gamma v\Vert_{L^2_h} +  \Vert \ZZ^\gamma \Tau v\Vert_{L^2_h}  \lesssim T^{-1}, \qquad |\gamma| \leq 2\hh,
\end{equation}
and
\begin{equation}
\label{kgneed22a}  
  \Vert \ZZ^\gamma \nabla  v\Vert_{L^2_h}  \lesssim S^{-\frac12} T^{-\frac{1}{2}}, \qquad |\gamma| \leq 2\hh,
\end{equation}
as well as the  $L^2$ bounds on the hyperboloids
\begin{equation}
\label{kgneed22+}  
\Vert \ZZ^\gamma v\Vert_{L^2_h(H)} +  \Vert \ZZ^\gamma \Tau v\Vert_{L^2_h(H)}  \lesssim T^{-1}, \qquad |\gamma| \leq 2\hh,
\end{equation}
and
\begin{equation}
\label{kgneed22a+}  
  \Vert \ZZ^\gamma \nabla  v\Vert_{L^2_h(H)}  \lesssim S^{-\frac12} T^{-\frac{1}{2}}, \qquad |\gamma| \leq 2\hh, 
\end{equation}
while  for $(\Box+1) v$ we have
\begin{equation}
\label{kgneed22a++}  
  \Vert \ZZ^\gamma e^{2\sigma}  (\Box+1) v\Vert_{L^2_h}  \lesssim  S^\frac12 T^{-\frac{1}{2}}, \qquad |\gamma| \leq \hh .
\end{equation}

We  use \eqref{box} to rewrite  
\[
-e^{2\sigma} (\Box +1) =-e^{2\sigma} -\left( \partial_{\sigma}+\frac{1}{2}\right)^{2} +\frac{1}{4} +\partial^2_{\phi} +\frac{1}{\sinh^2 \phi}\partial^2_{\theta}+\frac{\cosh \phi}{\sinh \phi}\partial_{\phi},
\]
and observe that the bounds \eqref{kgneed22+} and \eqref{kgneed22a++}  imply the bound
\begin{equation}
\label{m1}
\left\|  \ZZ^\gamma 
\left( e^{2\sigma} +\left( \partial_{\sigma}+\frac{1}{2}\right)^{2}- \partial_\phi^2 \right) v \right\| _{L^2_{h}} \lesssim  
S^\frac12 T^{-\frac{1}{2}}, \qquad |\gamma| \leq \hh .
\end{equation}
This last bound is strictly speaking not needed here but we have added for completeness; however its counterpart in the exterior region $\mathcal{C}^-$ will be essential.

Our goal is to estimate $v$ and its derivatives pointwise in $C_{TS}^+$. The advantage of working in hyperbolic 
coordinates  is that $C_{TS}^+$ is a region of size $1$. At this point we have two choices: (i) to use Sobolev embeddings
on the hyperboloids or (ii)  to use Sobolev embeddings in the full region. Both strategies work, however
\begin{itemize}
\item[ (i)] is more efficient in terms of derivative counting (choice of $\hh$);

\item[(ii)] also applies in the regions $C_{TS}^-$ exterior to the cone.
\end{itemize}
For these reasons we will alternate between the two strategies from case to case. In the case of the Klein-Gordon problem it will be convenient to use strategy (i) inside the cone and strategy (ii) outside.

The hyperboloids have dimension two so the simplest Sobolev pointwise inequality is
\[
 \Vert v\Vert_{L_h^{\infty}(H)} \lesssim \Vert Z^{\leq 2} v\Vert_{L^2_h(H)}.
\]
One can write this more efficiently as an interpolation inequality
\begin{equation}\label{easy-interp}
 \Vert v\Vert_{L_h^{\infty}(H)} \lesssim \Vert Z^{\leq 2} v\Vert_{L_h^{2}(H)}^\frac12  \Vert v\Vert_{L_h^{2}(H)}^\frac12 .
\end{equation}
For higher derivatives we need a similar bound for derivatives of $v$:

\begin{lemma}
The following interpolation estimate holds in $C_{TS}^+$:
\begin{equation}\label{better-interp}
\sup_{\rho} \Vert \partial^{\leq \hh} w\Vert_{L_h^{\infty}(H)} \lesssim \left(\sup_{\rho}\Vert Z^{\leq 2} w\Vert_{L_h^{2}(H)}\right)^\frac12 
\left( \sup_\rho \Vert \partial ^{\leq 2\hh} w\Vert_{L_h^{2}(H)}\right)^\frac12 .
\end{equation}
\end{lemma}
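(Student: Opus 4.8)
The plan is to prove \eqref{better-interp} by reducing it, via the hyperbolic coordinate change, to a standard Gagliardo--Nirenberg type estimate on a unit-sized two-dimensional domain, and then upgrading the fixed-$\rho$ estimate to a uniform-in-$\rho$ estimate by an additional Sobolev embedding in the $\sigma$ variable. First I would fix a hyperboloid $H_\rho \cap C_{TS}^+$, which in spherical hyperbolic coordinates $(\sigma,\phi,\theta)$ corresponds to a slice $\{\sigma = \sigma_\rho\} \times I_\phi \times [0,2\pi]$ with $I_\phi$ of unit length (in the generic region $\phi > 1$; the region $\phi < 1$, i.e.\ $S\approx T$, is handled by a small variant as noted in the footnote to \eqref{relations cts-phi}). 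On such a slice the vector fields $\partial_\phi = \Omega_{0r}$ and $\partial_\theta = \Omega_{12}$ are, up to bounded factors, among the $Z$ vector fields, while the Euclidean derivatives $\partial^{\leq \hh}$ translate into $\partial_{\sigma,\phi,\theta}^{\leq \hh}$ up to lower order terms with bounded coefficients, by the formulas \eqref{deriv1}, \eqref{deriv2}. Thus on each slice the claim becomes: for a function $W$ on a unit square in $(\phi,\theta)$,
\[
\Vert \partial^{\leq \hh} W \Vert_{L^\infty} \lesssim \Vert Z^{\leq 2} W \Vert_{L^2}^{1/2} \Vert \partial^{\leq 2\hh} W \Vert_{L^2}^{1/2},
\]
which follows from the standard one-dimensional interpolation $\Vert f\Vert_{L^\infty}\lesssim \Vert f\Vert_{L^2}^{1/2}\Vert f'\Vert_{L^2}^{1/2}$ applied iteratively in each variable, combined with the two-dimensional Sobolev embedding $\Vert f\Vert_{L^\infty}\lesssim \Vert Z^{\leq 2} f\Vert_{L^2}$; since $\hh\geq 7 > 2$ there is enough room to split the $2\hh$ derivatives into a factor carrying up to $\hh$ derivatives and a factor carrying the geometric $Z^{\leq 2}$, exactly as in the interpolation lemma already invoked in \eqref{interpolare}.

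The second step is to pass from the slicewise bound to the supremum over $\rho$. Here I would note that the parameter $\rho$ ranges over an interval for which $\sigma$ varies over a unit-length interval $I_\sigma$, so $\sup_\rho$ is a supremum over the $\sigma$-direction of a unit-sized interval. Applying once more the one-dimensional trace/interpolation inequality in $\sigma$,
\[
\sup_{\sigma\in I_\sigma} \Vert \partial^{\leq\hh} w(\sigma,\cdot)\Vert_{L^2_h(H)}^2 \lesssim \Vert \partial^{\leq\hh} w\Vert_{L^2_h}^2 + \Vert \partial^{\leq\hh+1} w\Vert_{L^2_h} \Vert \partial^{\leq\hh} w\Vert_{L^2_h},
\]
and absorbing the extra $\sigma$-derivative (which is $\mathscr S = \partial_\sigma$, again a controlled vector field) into either the $Z^{\leq 2}$ factor or the $\partial^{\leq 2\hh}$ factor, one obtains the stated uniform bound. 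Equivalently one can simply apply the three-dimensional Sobolev/interpolation estimate directly in the unit-sized region $D$ representing $C_{TS}^+$, which produces an $L^\infty$ bound on the whole region $D$ (hence in particular $\sup_\rho$ of the $L^\infty$ norm on each $H$), at the mild cost of a slightly less favorable derivative count; this is strategy (ii) mentioned in the text, and it is the version that will later also cover the exterior region.

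I expect the only genuinely delicate point — the rest being bookkeeping with the explicit change-of-variables formulas — to be controlling the commutators and the coefficients that appear when expressing Euclidean derivatives $\partial_{t,x}$ in terms of $\partial_\sigma,\partial_\phi,\partial_\theta$. In the region $\phi>1$ the coefficients $e^\sigma\cosh\phi$, $e^\sigma\sinh\phi$ etc.\ are all comparable to $T$, and after the natural rescaling $T^{-1}\partial_\phi$, $T^{-1}\partial_\theta$ one gets bounded smooth coefficients on the unit region $D$; the factor $1/\sinh^2\phi$ in \eqref{box} is likewise bounded there. Near $\phi\approx 1$, i.e.\ $S\approx T$, one instead uses $T^{-1}\partial_\sigma$ as the transversal vector field per \eqref{cool}, and the same interpolation applies with the roles of the vector fields mildly permuted. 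In all cases the key structural input is that, on the unit-sized region, the operators $\partial_\sigma,\partial_\phi,\partial_\theta$ and the vector fields $\mathscr S,\Omega_{0r},\Omega_{12}$ are interchangeable up to bounded-coefficient lower-order terms, so the interpolation inequality is genuinely the classical Gagliardo--Nirenberg inequality in disguise, and the details can safely be left to the reader as the text indicates.
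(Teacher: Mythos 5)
Your overall reduction --- pass to spherical hyperbolic coordinates, where $C_{TS}^+$ becomes a unit-sized region $D$ and, via \eqref{cool}, the estimate becomes a constant-coefficient interpolation inequality --- is exactly the paper's (deliberately terse) argument, and your ``strategy (ii)'' fallback of interpolating directly in the three-dimensional region $D$ is the route the paper actually intends. However, your primary two-step route is precisely the one the paper rules out in the first sentence of its proof: since $\partial \approx \{\,T^{-1}\partial_\theta,\ T^{-1}\partial_\phi,\ S^{-1}(\partial_\sigma-\partial_\phi)\,\}$, the quantity $\partial^{\leq \hh} w$ on a fixed slice $H_\rho$ involves transversal ($\partial_\sigma$) derivatives and is not determined by the restriction of $w$ to $H_\rho$, so there is no self-contained ``slicewise claim.'' More importantly, the patch you propose does not restore the bilinear structure of \eqref{better-interp}. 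Applying the two-dimensional Sobolev/Gagliardo--Nirenberg bound on a slice to $f=\partial^{\leq\hh}w$ produces the factors $\Vert \partial^{\leq\hh} w\Vert_{L^2_h(H)}^{1/2}\,\Vert Z^{\leq 2}\partial^{\leq \hh} w\Vert_{L^2_h(H)}^{1/2}$, and the mixed quantity $Z^{\leq 2}\partial^{\leq\hh}w$ is \emph{not} controlled by the right-hand side of \eqref{better-interp}: at tangential frequency $\lambda$ and transversal frequency $\mu$ it scales like $(1+\lambda^{2})(1+\kappa^{\hh})$ with $\kappa=T^{-1}\lambda+S^{-1}\mu$, whereas the geometric mean on the right only supplies $(1+\lambda^{2})^{1/2}(1+\kappa^{2\hh})^{1/2}$, which is strictly smaller when $\lambda$ is large. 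Likewise, your $\sigma$-trace inequality only reproduces the $\partial^{\leq 2\hh}$ factor (since $\Vert\partial^{\leq\hh+1}w\Vert\leq\Vert\partial^{\leq 2\hh}w\Vert$) and never brings in the $Z^{\leq 2}$ factor, so concatenating your two steps does not yield the claimed geometric mean.

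The correct execution is the simultaneous one: take a Littlewood--Paley decomposition in $D$ with tangential frequency $\lambda$ and transversal frequency $\mu$, use two-dimensional Bernstein on each slice to pass from $\sup_\rho L^2_h(H)$ to $L^\infty$ at the cost of one factor of $\lambda$, and then observe that
\[
\lambda\,\bigl(1+\kappa^{\hh}\bigr) \;\lesssim\; \bigl(1+\lambda^{2}\bigr)^{\frac12}\,\bigl(1+\kappa^{2\hh}\bigr)^{\frac12},
\qquad \kappa = T^{-1}\lambda+S^{-1}\mu,
\]
which is exactly the Cauchy--Schwarz balance between the two factors on the right of \eqref{better-interp}; the exponential slack in this inequality away from the balanced frequencies makes the dyadic summation trivial. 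This is the ``standard interpolation inequality'' the paper alludes to, and it is genuinely a statement on the full region $D$, not on individual hyperboloids. Your remarks about the coefficients becoming bounded after the normalization $T^{-1}\partial_\phi$, $T^{-1}\partial_\theta$, and about the $S\approx T$ (i.e. $\phi\lesssim 1$) regime where $T^{-1}\partial_\sigma$ replaces the transversal field, are correct and are the right way to reduce to constant coefficients.
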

\begin{proof}
Here we cannot argue on a single hyperboloid because $D$ also contains transversal derivatives. Expressed in spherical hyperbolic coordinates, using the substitute \eqref{cool} for the space-time gradient, this becomes a standard interpolation inequality. The proof is omitted, as it uses the same coordinates and the same principles as the proof of Lemma~\ref{l:interpolation} in the appendix.

\end{proof}

The desired pointwise bounds for $v$ and its derivatives follow directly from this interpolation inequality provided that  $\hh \geq 3$.

\subsubsection{ Extra gain near the cone.}
Our goal here is to show that near the cone the pointwise bounds for $v$ and its derivatives 
improve to 
\begin{equation}\label{est_partialjv}
| \partial^j v| \leq T^{-1-\delta} \langle t-r \rangle^{\delta}, \qquad \delta > 0, \qquad j = 0,1,2,3.
\end{equation}

Assuming we have no gain when $j = 4$, by interpolation it suffices to have a gain in the $j = 0$ bound. By the interpolation estimate \eqref{easy-interp}
it suffices to improve the $L^2$ bound
on hyperboloids in \eqref{kgneed22+} to
\begin{equation}
\label{need-2H}  
\Vert  v\Vert_{L^2_h(H)}   \lesssim S^{\delta}T^{-1-\delta}.
\end{equation}

\medskip

As a first  starting point we begin with the similar bound without a gain in \eqref{kgneed22+}, namely 
\begin{equation}
\label{got1}  
\Vert  v\Vert_{L^2_h(H)}   \lesssim T^{-1}.
\end{equation}
A second starting point is obtained 
from \eqref{kgneed22+} with exactly one derivative via the relation \eqref{d-equiv}, where $\partial_\sigma v$ is a vector field bound and thus better. This yields 
\begin{equation}
\label{got2}  
\Vert  \partial_\sigma v\Vert_{L^2_h(H)}   \lesssim ST^{-1} .
\end{equation}

Finally, our third starting point 
comes from \eqref{m1} with $\gamma = 0$. In that case we can use \eqref{kgneed22+} with $Z^\gamma = \partial_\phi^2$ as well as \eqref{got1} and \eqref{got2}
to simplify \eqref{m1} to 
\begin{equation}
\label{got3}
\left\|  
\left( e^{2\sigma} + \partial_{\sigma}^{2}
 \right) v \right\| _{L^2_{h}} \lesssim  
S^\frac12 T^{-\frac{1}{2}}.
\end{equation}
It remains to show that the bounds \eqref{got1}, \eqref{got2} and \eqref{got3}
imply \eqref{need-2H}. Here we recall that 
the set $C_{TS}$ corresponds to an unit interval in $\sigma$.

\medskip

We will interpret \eqref{need-2H}
as coming from an energy estimate on hyperboloids, which in the new coordinates are the sets $\sigma = const$.
The natural energy associated to the operator in \eqref{got3} is 
\[
E(v)  = |\partial_\sigma v|^2 + e^{2\sigma} v^2 .
\]
Here we omit the $\phi$ and $\theta$ integration as it plays no role at all in the proof of the estimate \eqref{need-2H}.
Since in $C_{TS}$ we have $e^{2\sigma} 
\approx ST$, in order to prove \eqref{need-2H} it suffices to show 
that 
\begin{equation}
\label{need-2Ha}  
E(v) \lesssim S^{1+\delta}T^{-1-\delta}.
\end{equation}
For this we will use an energy estimate. We  compute
\[
\frac{d}{d\sigma} E(v) 
= 2e^{2\sigma} v^2 + 2 \partial_{\sigma}v 
\left( e^{2\sigma} + \partial_{\sigma}^{2}
 \right) v = O(E(v)) + 2 \partial_{\sigma}v 
\left( e^{2\sigma} + \partial_{\sigma}^{2}
 \right) v .
\]
Initializing at some point $\sigma = \sigma_0$, we use \eqref{got2} and \eqref{got3} to estimate the last term on the right as 
\[
\| \partial_{\sigma}v 
\left( e^{2\sigma} + \partial_{\sigma}^{2}
 \right) v \|_{L^1} \lesssim S^{\frac32} T^{-\frac32} ,
\]

and then Gronwall's inequality to arrive 
at 
\[
\sup_{\sigma} E(v)(\sigma) \lesssim E(v)(\sigma_0) + S^{\frac32} T^{-\frac32}.
\]
This suffices provided we find a good initialization. By \eqref{got2} any $\sigma_0$ is good for the first, kinetic  component of the energy, but we have a problem with the second, namely the potential energy. To address this difficulty 
we use the principle that the two components 
should be comparable in average. To prove this, we use a compactly supported nonnegative cutoff $\chi(\sigma)$ and
integrate by parts
\[
\int \chi(\sigma)(|\partial_\sigma v|^2 - e^{2\sigma} v^2) \, d\sigma = - \int \chi(\sigma) v \left( e^{2\sigma} + \partial_{\sigma}^{2}
 \right) v\, d\sigma + \int \frac12 \chi''(\sigma) v^2 \,d\sigma = O(S^{\frac12} T^{-\frac32}),
\]
where the integrals on the right were estimated using \eqref{got1} and \eqref{got3}. This implies that  
we also have 
\[
\int \chi(\sigma)e^{2\sigma} v^2 \, d\sigma \lesssim S^{\frac32} T^{-\frac32},
\]
and in turn allows us to find a good initialization $\sigma_0$ so that 
\[
E(v)(\sigma_0) \lesssim S^{\frac32} T^{-\frac32} .
\]
Then \eqref{need-2Ha} follows with $\delta = \frac12$, hence showing that \eqref{need-2H} holds true with $\delta=\frac14$ and \eqref{est_partialjv} with $\delta=\frac18$.

\subsection{Pointwise bounds for the wave equation inside the cone} 
In the analysis for the wave component we want to obtain pointwise
bounds on $C_{TS}^+$ for $\nabla u$  and its derivatives (first and second) and then for $\Tau u $ (tangential
derivatives) as well as its derivatives (first and second).

Applying \eqref{Zineq3} we will  control 
\begin{equation}
\label{wneed3}  
\Vert \ZZ^\gamma \Tau u\Vert_{L^2_tL^2_x}\lesssim S^{\frac{1}{2}}, \qquad |\gamma| \leq 2\hh 
\end{equation}
as well as the gradient of $u$
\begin{equation}
\label{wneed2}  
\Vert \ZZ^\gamma \nabla u\Vert_{L^2_t L^2_x}\lesssim T^{\frac{1}{2}}, \qquad |\gamma| \leq 2\hh .
\end{equation}

On hyperboloids we have
\begin{equation}
\label{wneed3+}  
\Vert \ZZ^\gamma \Tau u\Vert_{L^2(H)}\lesssim 1, \qquad |\gamma| \leq 2\hh ,
\end{equation}
\begin{equation}
\label{wneed2+}  
\Vert \ZZ^\gamma \nabla u\Vert_{L^2(H)}\lesssim S^{-\frac12} T^{\frac{1}{2}}, \qquad |\gamma| \leq 2\hh .
\end{equation}

Finally from the wave equation we get
\begin{equation}
\label{w}  
  \Vert \ZZ^\gamma  \Box u \Vert_{L^2_t L^2_x}  \lesssim  T^{-\frac{1}{2}}, \qquad |\gamma| \leq \hh .
\end{equation}

We  translate all the estimates above in spherical hyperbolic coordinates
\begin{equation}
\label{wneed2c}  
\Vert \ZZ^\gamma \Tau u\Vert_{L^2_{h}}\lesssim  T^{-1}, \quad |\gamma|\le 2h
\end{equation}
and
\begin{equation}
\label{wneed2c2}  
\Vert \ZZ^\gamma \nabla u\Vert_{L^2_{h}}\lesssim  (ST)^{-\frac12}, \quad |\gamma|\le 2h.
\end{equation}
On the hyperboloids:
\begin{equation}
\label{wneed2c+}  
\Vert \ZZ^\gamma \Tau u\Vert_{L_h^2(H)}\lesssim  T^{-1}, \quad |\gamma|\le 2h,
\end{equation} 
and
\begin{equation}
\label{wneed2c2+}  
\Vert \ZZ^\gamma \nabla u\Vert_{L^2_h(H)}\lesssim  (ST)^{-\frac12}, \quad |\gamma|\le 2h.
\end{equation}

Finally for the wave equation we have
\begin{equation}
\label{wneed1c}  
\Vert  \ZZ^\gamma e^{2\sigma} \Box u \Vert_{L^2_h} \lesssim S^{\frac{1}{2}} T^{-\frac12}, \qquad |\gamma| \leq \hh.
\end{equation}
Combining this with \eqref{box} we obtain 
\begin{equation}
\label{wm1}
\left\|  \ZZ^{\gamma} (\partial_{\sigma}-\partial_{\phi}) (\partial_{\sigma}+\partial_{\phi}+1) u \right\| _{L^2_{h}}
\lesssim S^{\frac12}T^{-\frac12}  \qquad |\gamma| \leq \hh .
\end{equation}

At this point we will show how to use the bounds on hyperboloids to prove the estimates \eqref{boot11}, \eqref{boot12bis} as well as \eqref{boot12} but with $\delta=0$.

From \eqref{wneed2c} and \eqref{wneed2c2} we deduce
\begin{equation}
\label{wneed2c2g}  
\Vert \ZZ^\gamma Z  u\Vert_{L_h^2(H)}\lesssim  1, \qquad |\gamma| \leq 2\hh,
\end{equation}
as well as
\begin{equation}
\label{wneecomp}  
\Vert \ZZ^\gamma (\partial_{\sigma} -\partial_{\phi}) u\Vert_{L_h^2(H)}\lesssim  S^\frac12 T^{-\frac12}\qquad |\gamma| \leq 2\hh .
\end{equation}
The interpolation bound \eqref{better-interp} yields
\[
|\partial^{\leq \hh} Zu| \lesssim  1,
\]
and
\[
|\partial^{\leq \hh}(\partial_{\sigma} -\partial_{\phi})  u| \lesssim  S^\frac12 T^{-\frac12}.
\]
Assuming that $\hh\geq 2$, these last two bounds translated back in terms of regular derivatives give exactly the estimates \eqref{boot11}, \eqref{boot12bis} as well as \eqref{boot12} with $\delta=0$. In order to obtain the $\delta$ gain we need however to return to the $L^2$ bounds in $C^+_{TS}$ rather than work on hyperboloids. Since this situation is identical to the one we will encounter in the exterior region $\mathcal{C}^-$, we postpone this proof to Section \ref{s:w-out}.

\subsection{Normalized coordinates outside the cone} For the analysis outside the cone it is very convenient to work with the natural counterpart of the  \emph{spherical hyperbolic coordinates} used in the interior. These are:
\begin{equation}
\label{sphericalc-out}
\left\{
\begin{aligned}
&t= e^{\sigma}\sinh (\phi),\\
&x_1=e^{\sigma}\cosh (\phi) \sin (\theta), \\
&x_2=e^{\sigma}\cosh (\phi) \cos(\theta),
\end{aligned}
\right.
\end{equation} 
where $\theta$ and $\phi$ are the polar coordinates on  the single sheeted hyperboloid; $\phi$ plays the role of the radial coordinate and provides the arch length parametrization of the radial time-like geodesics on the hyperboloid 
and $\theta$ is the angle from a reference direction. Finally,
$e^{2\sigma}$ represents the Minkowski distance to the origin. The Jacobian of the change of variable is
\[
J(\sigma, \phi, \theta) =e^{3\sigma}\cosh(\phi).
\]
The wave operator in these new variables has the form 
\begin{equation}
\label{box-out}
-\Box =e^{-2\sigma}\left( \partial_{\sigma}^{2}- \partial_{\phi}^2 +\frac{1}{\cosh^2(\phi)}\partial_{\theta}^2-\partial_{\sigma} +\frac{\sinh(\phi)}{\cosh (\phi)}\partial_{\phi}\right).
\end{equation}

In these coordinates, the regions $C^-_{TS} $ become essentially rectangular regions of size $1$. Precisely, in spherical hyperbolic coordinates the regions $C^-_{TS}$ are represented as follows
\[
C^-_{TS} \quad \longrightarrow \quad D:= \left\{ (\sigma, \phi, \theta )\; : \; (\sigma, \phi, \theta)\in  I_{\sigma}\times I_{\phi} \times [0, 2\pi]\right\},
\]
where $I_\sigma$ and $I_\phi$ are intervals of size $1$. As discussed earlier we are assuming we are at distance at least one from the cone, which corresponds to $S\geq 1$. Here we work under the assumption that $S\lesssim T$, which selects a conical neighbourhood of the cone, as the analysis in the outer part is much simpler (see Proposition~\ref{p:out}). Therefore, in such region $C^-_{TS}$ we have 
\begin{equation}
\label{relations cts-phi-out}
e^{2\sigma}=\vert x\vert ^2-t^2 \approx ST, \qquad e^{\sigma}\cosh (\phi) \approx T, \qquad J\approx ST^2 .
\end{equation}

\subsection{Pointwise bounds for the Klein-Gordon component
outside the cone} Here we prove the pointwise bounds for the Klein-Gordon equation in the exterior region. We harmlessly assume that $S \leq T$, which is the interesting region near the cone. 
Arguing in the same way as for the interior region, we move the bounds \eqref{Zineq3}, \eqref{boxZineq3} in the hypothesis of the theorem  to the spherical hyperbolic coordinates; here there are no hyperboloid bounds. We get
\begin{equation}
\label{kgneed22-out}  
\Vert \ZZ^\gamma v\Vert_{L^2_h} +  \Vert \ZZ^\gamma \Tau v\Vert_{L^2_h}  \lesssim T^{-1}, \qquad |\gamma| \leq 2\hh,
\end{equation}
and
\begin{equation}
\label{kgneed22a-out}  
  \Vert \ZZ^\gamma \nabla  v\Vert_{L^2_h}  \lesssim S^{-\frac12} T^{-\frac{1}{2}}, \qquad |\gamma| \leq 2\hh,
\end{equation}
while  for $(\Box+1) v$ we have
\begin{equation}
\label{kgneed22a++out}  
  \Vert \ZZ^\gamma e^{2\sigma}  (\Box+1) v\Vert_{L^2_h}  \lesssim  S^\frac12 T^{-\frac{1}{2}}, \qquad |\gamma| \leq \hh .
\end{equation}

We  use \eqref{box-out} to rewrite  
\[
-e^{2\sigma} (\Box +1) =-e^{2\sigma} +\left( \partial_{\sigma}-\frac{1}{2}\right)^{2} -\frac{1}{4} -\partial^2_{\phi} +\frac{1}{\cosh^2 \phi}\partial^2_{\theta}+\frac{\sinh \phi}{\cosh \phi}\partial_{\phi},
\]
and observe that the bounds \eqref{kgneed22a-out} and \eqref{kgneed22a++out}  give
\begin{equation}
\label{m1-out}
\left\|  \ZZ^\gamma 
\left( e^{2\sigma} -\left( \partial_{\sigma}-\frac{1}{2}\right)^{2}+ \partial_\phi^2 \right) v \right\| _{L^2_{h}} \lesssim  
S^\frac12 T^{-\frac{1}{2}}, \qquad |\gamma| \leq \hh.
\end{equation}

Our goal is to estimate $v$ and its derivatives pointwise in $C_{TS}^-$.  There we can set (see formula \eqref{cool} which still applies)
\begin{equation}
\label{cool2}
Z = \left\{ \partial_\theta, \partial_\phi \right\} , \qquad \Tau = T^{-1} Z, \qquad  \nabla_{t,x} = \left\{ T^{-1} Z, S^{-1}(\partial_\phi - \partial_{\sigma} )\right\},
\end{equation}
so that everything is constant coefficients in $\theta$ and $\phi$ within $C^-_{TS}$. At this point we can harmlessly localize on the unit scale in $\phi$, then freeze the constants and finally  forget about about the $\phi$ localization and assume that $\phi$ is either on $\mathbb{R}$ or on the circle.

We localize to a frequency $\lambda$  in $\theta$, $\phi$. Based on the symbol of the operator in \eqref{m1-out}, the interesting threshold for $\lambda$ is 
$\sqrt{ST}$. For smaller $\lambda$, the $\partial_\phi^2$ component is  controlled by $e^{2\sigma}$ and thus perturbative  in \eqref{m1-out}, which can then 
be treated as an elliptic bound.

Based on this we distinguish two cases:

\bigskip

(i) Large $\lambda$, namely $ \lambda \gtrsim \sqrt{ST}$. Then we disregard \eqref{m1-out} and work only with 
\eqref{kgneed22-out} and \eqref{kgneed22a-out}. There we have a second interesting frequency, namely the one for 
$\partial_\sigma-\partial_\phi$. We localize this dyadically to the frequency $\mu$. Then from \eqref{cool2}
\[
\nabla_{t,x} \approx T^{-1} \lambda + S^{-1} \mu.
\]  
Hence  from  \eqref{kgneed22-out} and \eqref{kgneed22a-out} we have the following $L^2$ bound for the corresponding component $v_{\lambda \mu}$ of $v$: 
\[
\| v_{\lambda \mu}\|_{L^2} \lesssim T^{-1} \frac{1}{\left[  \lambda^2 + (T^{-1} \lambda + S^{-1} \mu)^{2\hh} \right] \left[1+ (T^{-1} \lambda + S^{-1} \mu) S^{\frac12}T^{-\frac12}\right]}\, .
\]
Applying Bernstein we arrive at the $L^\infty$ bound
\[
\| \nabla^j v_{\lambda \mu}\|_{L^\infty} \lesssim T^{-1} \frac{\lambda \mu^{\frac12}  (T^{-1} \lambda + S^{-1} \mu)^{j}
}{ \left[ \lambda^2 + (T^{-1} \lambda + S^{-1} \mu)^{2\hh} \right]\left[1+ (T^{-1} \lambda + S^{-1} \mu) S^{\frac12}T^{-\frac12}\right]} \, .
\]
It remains to maximize the right hand side with respect to $\lambda$ and $\mu$ subject to the constraint $\lambda \geq \sqrt{ST}$.

On $\mu$ we have no constraint, and the maximum is attained when its contribution  in the first factor in the denominator 
 balances that of $\lambda$. Depending on which $\lambda$ term gets balanced, we have two scenarios:

\medskip

(a) ($S^{-1} \mu)^{2\hh} = \lambda^2 \gtrsim (T^{-1} \lambda)^{2\hh}$. Then the above expression becomes
\[
T^{-1} S^\frac12  \lambda^{-1 + \frac{j}{h}+\frac{1}{2h}} \left( 1+\lambda^{\frac{1}{h}} S^{\frac12} T^{-\frac12}\right)^{-1} ,
\]
which is decreasing in $\lambda$ and hence maximized at $\lambda = \sqrt{ST}$. We get a maximum of 
\[
T^{-1} S^\frac12  (ST)^{\frac12(-1 + \frac{j}{h}+\frac{1}{2h})} \left( 1+(ST)^{\frac{1}{2h}}S^{\frac12}T^{-\frac{1}{2}}\right)^{-1}
\lesssim T^{-1} S^\frac12  (ST)^{\frac12(-1 + \frac{j}{h}+\frac{1}{2h})} .
\]
This is favorable (i.e. $\leq T^{-1-\delta}$) if  $h > 2j +1$. Since we need $j \leq 3$, we should have $\hh \geq 8$.

\medskip

b) $(S^{-1} \mu)^{2\hh} = (T^{-1} \lambda)^{2\hh} \gtrsim \lambda^2 $. We get again a negative power of $\lambda$ 
which is maximized if $\lambda$ is as small as possible within these constraints.
But this cannot happen 
at $\lambda = \sqrt{ST}$, as this is inconsistent with the above constraint. Therefore it happens when the two cases (b) and (a) 
are in balance. So the maximum is always attained in case (a).

\bigskip 

(ii) Small $\lambda$, namely $\lambda \ll \sqrt{ST}$. The contribution of $\partial_\phi^2$   in \eqref{m1-out}
can be estimated by
\[
\|\ZZ^\gamma \partial^2_\phi v\|_{L^2_h}\ll  ST 
\|\ZZ^\gamma v\|_{L^2_h},
\quad |\gamma|\le h ,
\]
while the remaining operator $e^{2\sigma} -\partial^2_\sigma$ is elliptic so we have
\[
ST \|\ZZ^\gamma v\|_{L^2_h} + \| \partial_\sigma^2 \ZZ^\gamma v \|_{L^2_h}  \lesssim  \| (e^{2\sigma} -\partial^2_\sigma) v\|_{L^2_h}
, \quad |\gamma|\le h .
\]
Combining these two estimates with \eqref{m1-out} we obtain the elliptic bound
\begin{equation}
\label{v-ell}
ST \|\ZZ^\gamma v\|_{L^2_h} + \| \partial_\sigma^2 \ZZ^\gamma v \|_{L^2_h}  \lesssim  
S^\frac12 T^{-\frac{1}{2}}, \qquad |\gamma| \leq \hh.
\end{equation}
We keep the same $\mu$ notation but introduce a third dyadic scale $\gamma$ for the $\sigma$ frequency.

 Combining the $L^2$ bounds \eqref{kgneed22-out} and \eqref{v-ell}, as above we get the $L^\infty$ bound 
\[
\|\nabla^j v_{\lambda \mu \gamma}\|_{L^\infty} \lesssim  T^{-1}
 \frac{\lambda \min\{ \mu,\gamma\}^{\frac12}  (S^{-1} \mu+T^{-1} \lambda)^{j}
}{ \left( \lambda^2 + (S^{-1} \mu + T^{-1} \lambda )^{2\hh}\right) + (ST)^{-\frac12} ( ST + \gamma^2)(\lambda +   (S^{-1} \mu+ T^{-1} \lambda)^{\hh})    }.
\]

Now we harmlessly replace $(S^{-1} \mu+T^{-1} \lambda)^{j}$
by $\lambda^{\frac{j}h} + (S^{-1} \mu+T^{-1} \lambda)^{j}$
at the numerator. Then we
can drop the $T^{-1} \lambda $ term to get
\[
\|\nabla^j v_{\lambda \mu \gamma}\|_{L^\infty} \lesssim  T^{-1}
 \frac{\lambda \min\{ \mu,\gamma\}^{\frac12}  (\lambda^{\frac{j}h}+ (S^{-1} \mu)^{j})
}{ \left( \lambda^2 + (S^{-1} \mu)^{2\hh}\right) + (ST)^{-\frac12} ( ST + \gamma^2)(\lambda +   (S^{-1} \mu)^{\hh})    }.
\]

We need to maximize the right hand side. The $\mu$ maximum is attained when $\lambda = (S^{-1} \mu)^\hh$,
in which case the above expression simplifies to
\[
T^{-1}
 \frac{ \min\{ \mu,\gamma\}^{\frac12}  \lambda ^{\frac{j}{\hh}}
}{  \lambda  + (ST)^{-\frac12} ( ST + \gamma^2) }.
\]
Since $\lambda < \sqrt{ST}$ we can drop it from the first term from the denominator and then maximize it at the numerator.
Finally, the $\gamma$ maximum is attained if $\gamma = \sqrt{ST}$. We get a maximum of 
\[
T^{-1}  \frac{ \min\{ S^\frac12 (ST )^{\frac{1}{4\hh}},(ST)^{\frac14}\}   (ST)^{\frac{j}{2\hh}}
}{ (ST)^{\frac12} }.
\]
Using the second term in the min,
this is favorable if $2j<h$ i.e. the same as in case (i).

\begin{remark} Here we gain a better bound of $T^{-1-\delta}$ which shows that outside the cone we have better decay for the Klein-Gordon component.
\end{remark}

\subsection{Pointwise bounds for the wave component outside the cone}
\label{s:w-out}
Here we prove the pointwise bounds for the wave equation in the
exterior region. As before we harmlessly assume that $S \leq T$.
Arguing in the same way as for the interior region, we move the bounds
\eqref{Zineq3}, \eqref{boxZineq3} in the hypothesis of the theorem to
the spherical hyperbolic coordinates; here there are no hyperboloid
bounds. In each region $C^-_{TS}$ we get
\begin{equation}
\label{wneed2c2g+}  
\Vert \ZZ^\gamma Z u\Vert_{L^2_{h}}\lesssim  1, \qquad |\gamma| \leq 2\hh ,
\end{equation}
and
\begin{equation}
\label{wneecomp+}  
\Vert \ZZ^\gamma (\partial_{\sigma} -\partial_{\phi}) u\Vert_{L^2_{h}}\lesssim  S^\frac12 T^{-\frac12}, 
\qquad |\gamma| \leq 2\hh ,
\end{equation}
as well as the  bounds
\begin{equation}
\label{m1+}
\left\| \ZZ^\gamma  (\partial_{\sigma}-\partial_{\phi}) (\partial_{\sigma}+\partial_{\phi}+1) 
u \right\| _{L^2_{h}}\lesssim S^\frac12 T^{-\frac12} , \qquad |\gamma| \leq \hh .
\end{equation}
All the analysis below applies equally not only to $C^{-}_{TS}$ but
also to $C^+_{TS}$, providing an alternative approach in the latter
case. For this reason we drop the $\pm$ superscripts below.  We are
allowed to freely localize in $\phi$ on the unit scale in all of the
$C_{TS}$, as well as localize on the unit scale in $\sigma$ if $S
\approx T$.  For the purpose of the proofs below we assume these
localizations. 

Relative to the variable $\sigma$, we are not allowed to localize 
directly on the unit scale in the full set of inequalities 
\eqref{wneed2c2g+}, \eqref{wneecomp+}  and \eqref{m1+}.
However we can finesse this minor difficulty if we agree to use 
only the pair of bounds \eqref{wneed2c2g+}, \eqref{wneecomp+}
\footnote{with a weaker bound of $1$ in the second case} to prove 
\eqref{boot11bis} and \eqref{boot11}, and the pair of bounds \eqref{wneecomp+}  and \eqref{m1+}
to prove \eqref{boot12} and \eqref{boot12bis} (modulo \eqref{boot11bis},
can be recast as bounds for $(\partial_\sigma - \partial_\phi) u$). 
Then in the first case we can localize the function $u$ 
on the unit scale in $\sigma$, whereas in the second case we can instead localize
the function $(\partial_\sigma - \partial_\phi) u$ on the unit scale in $\sigma$.

\bigskip

Our main tool will be the following Sobolev pointwise  inequality:

\begin{lemma}
For functions $w$ compactly supported in $C_{TS}$ we have the following pointwise bound
(interpolation inequality):
\begin{equation}
\label{interpolate-2h}
\Vert \partial^{\leq \hh/2} w\Vert_{L^{\infty}(C_{TS})} \lesssim \Vert \ZZ^{\leq 2\hh} w\Vert_{L^{2}_{h} (C_{TS})} 
+ \left\|  \ZZ^{\leq \hh} \left( \partial_{\sigma}\pm \partial_{\phi}\right) w \right\| _{L^{2}_{h} (C_{TS})}.
\end{equation}
\end{lemma}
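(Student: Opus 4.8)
The plan is to reduce this to a standard Sobolev/interpolation inequality on a unit cube, using the fact that in spherical hyperbolic coordinates the region $C_{TS}$ is comparable to a fixed unit-sized box $D = I_\sigma \times I_\phi \times [0,2\pi]$ (or, after localizing, a genuine unit cube in $(\sigma,\phi,\theta)$). The only subtlety is that the three vector fields controlled on the right-hand side, namely the $Z$ fields $\partial_\theta,\partial_\phi$ and the "good" field $\partial_\sigma \mp \partial_\phi$, do not literally form the coordinate frame $\{\partial_\sigma,\partial_\phi,\partial_\theta\}$; however, together they do span the tangent space uniformly (one recovers $\partial_\sigma$ from $(\partial_\sigma\mp\partial_\phi)+(\pm\partial_\phi)$), so controlling these is equivalent, up to bounded linear combinations with constant coefficients after localization, to controlling all coordinate derivatives. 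Concretely, first I would localize $w$ on the unit scale in $\phi$ (allowed in all of $C_{TS}$) and, when $S\approx T$, also in $\sigma$; when $S\ll T$ the good field $\partial_\sigma\mp\partial_\phi$ is the one playing the role of the transversal derivative and one keeps $w$ itself only through $\ZZ^{\le 2\hh}w$.

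Next, I would set up the weighted $L^2_h$ norms: recall that $\|f\|_{L^2_h}^2 = \int |f|^2 J\, d\sigma\, d\phi\, d\theta$ with Jacobian $J \approx ST^2$ constant on $C_{TS}$, so $\|f\|_{L^2_h}$ is, up to the fixed constant $(ST^2)^{1/2}$, just the flat $L^2$ norm in $(\sigma,\phi,\theta)$ on the unit box. Likewise the $L^\infty$ norm is coordinate-free. Thus, after dividing out these fixed normalizing constants, the claimed bound becomes exactly:
\[
\|\partial^{\le \hh/2} w\|_{L^\infty(D)} \lesssim \|Z^{\le 2\hh} w\|_{L^2(D)} + \|Z^{\le \hh}(\partial_\sigma \mp \partial_\phi) w\|_{L^2(D)},
\]
on a unit box $D$ in $\mathbb{R}^3$ (or $\mathbb{R}^2\times S^1$). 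The key point is that one *mixed* derivative count suffices: we are allowed $2\hh$ derivatives in the two $Z$ directions but only $\hh$ in the transversal direction. Since $D$ is $3$-dimensional, the flat Sobolev embedding $H^2(D)\hookrightarrow L^\infty(D)$ requires $2$ derivatives in *each* direction to be safe; here we want $\hh/2$ Euclidean derivatives of $w$ (which involves at most $\hh/2$ transversal derivatives), and we have $2\hh \ge 4\cdot(\hh/2)$ tangential and $\hh \ge 2\cdot(\hh/2)$ transversal derivatives available, which is more than enough. So this is a routine anisotropic Gagliardo–Nirenberg–Sobolev interpolation, and I would simply cite it.

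The one place requiring a little care — and the main (mild) obstacle — is the bookkeeping converting $\partial^{\le \hh/2}$ (Euclidean derivatives) into combinations of $\partial_\sigma,\partial_\phi,\partial_\theta$ and then into the $Z$ and good-derivative frame, tracking that the coefficients are smooth and bounded on $C_{TS}$ after the $\phi$-localization (and $\sigma$-localization when relevant), and that commutators between these fields produce only lower-order terms with bounded coefficients, hence are absorbed. Using the explicit formulas \eqref{deriv1}, \eqref{deriv2} and the relation $Z = T\Tau - S\nabla_x\cdot\frac{x}{r}$ from \eqref{ZT rel CTS}, together with $\Tau = T^{-1}Z$ and $\nabla_{t,x}\approx \{T^{-1}\partial_\theta, T^{-1}\partial_\phi, S^{-1}(\partial_\sigma-\partial_\phi)\}$ from \eqref{cool}, all of this is mechanical. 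Since none of it is conceptually hard and the paper explicitly invites omitting such details ("The details are left for the reader" in the analogous earlier lemma), I would state the reduction to the flat case, invoke the standard interpolation inequality on the unit box, and omit the routine verification.
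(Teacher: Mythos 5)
Your overall strategy --- pass to spherical hyperbolic coordinates, observe that $C_{TS}$ becomes a unit-sized box with constant Jacobian, and invoke a constant-coefficient anisotropic Sobolev/Bernstein inequality --- is exactly the route the paper takes (its stated proof is a one-line reduction of this kind, with the actual Littlewood--Paley/Bernstein computation carried out in the subsection that follows the lemma). However, the step where you count derivatives contains a genuine gap. You read the right-hand side of \eqref{interpolate-2h} as providing ``$2\hh$ derivatives in the two $Z$ directions'' and write it as $\Vert Z^{\le 2\hh}w\Vert_{L^2}$. But the multi-index convention is $|\gamma| = |\alpha| + \hh|\beta|$ for $\ZZ^\gamma = \partial^\alpha Z^\beta$, so $\ZZ^{\le 2\hh}$ allows at most \emph{two} Klainerman vector fields (this economy is the point of the whole paper, advertised in the introduction); the remaining budget consists of up to $2\hh$ Euclidean derivatives $\partial$, which in the frame \eqref{cool} are the \emph{weighted} fields $T^{-1}\partial_\theta$, $T^{-1}\partial_\phi$, $S^{-1}(\partial_\sigma-\partial_\phi)$, not unit-strength coordinate derivatives. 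With only the genuinely available tangential control, namely $Z^{\le 2}w$ and $Z^{\le 1}(\partial_\sigma\pm\partial_\phi)w$ in $L^2$, the claimed $L^\infty$ bound for $\partial^j w$ with $j\ge 1$ is false: at Littlewood--Paley frequencies $\lambda$ (in $Z$) and $\mu$ (in $\partial_\sigma-\partial_\phi$), Bernstein produces a numerator $\lambda\mu^{1/2}(T^{-1}\lambda+S^{-1}\mu)^{j}$ against a denominator of only $\lambda^2+\mu\lambda$, and the ratio is unbounded as $\mu\to\infty$.

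The correct mechanism, which your ``more than enough'' count obscures, is the interplay between the two unit-strength $Z$ derivatives (which settle the case $j=0$) and the high powers $(T^{-1}\lambda+S^{-1}\mu)^{2\hh}$ and $\mu(T^{-1}\lambda+S^{-1}\mu)^{\hh}$ supplied by the Euclidean-derivative terms (which control the large-$\mu$ regime). The balance is tight rather than slack: boundedness of the resulting ratio requires roughly $j<2\hh-\tfrac32$, and the $\delta$-gain exploited afterwards requires $2j<\hh$ --- this is precisely where the hypothesis $\hh\ge 7$ comes from. So one cannot simply cite a routine anisotropic Gagliardo--Nirenberg inequality with derivatives to spare; one must run the frequency optimization, as the paper does in the ``extra gain'' computation immediately following the lemma. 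Your remaining reductions --- constancy of the Jacobian (which in fact only makes the flat inequality stronger than the stated one), the spanning of the frame by $\{\partial_\theta,\partial_\phi,\partial_\sigma\mp\partial_\phi\}$, and the unit-scale localizations --- are fine, modulo the paper's caveat that one may not localize in $\sigma$ for all the relevant families of bounds simultaneously.
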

As in previously discussed interpolation inequalities, in hyperbolic coordinates 
this can be viewed as a standard constant coefficient bound which is obtained from Bernstein type
inequalities.  For instance if $h = 0$, the inequality becomes
\[
\| w\|_{L^\infty_h} \lesssim \| \partial_{\theta,\phi}^{\leq 2}
 w\|_{L^2_h} + \| \partial_{\theta,\phi}^{\leq 1}  \left( \partial_{\sigma}\pm \partial_{\phi}\right)
 w\|_{L^2_h} .
\]
Denoting by $\lambda$ the $\partial_{\phi,\theta}$ frequency and by $\mu$ the $\partial_{\sigma}\pm \partial_{\phi}$ frequency,
by Bernstein's inequality the above bound reduces to
\[
\lambda \mu^\frac12 \lesssim (1+ \lambda^2) + \mu (1+\lambda),
\]
which is straightforward, also with room for dyadic summation.
The case $h > 0$ is similar but with more cases, interpreting 
regular derivatives as
$\partial_{x,t} \approx (T^{-1} \partial_{\theta,\phi}, S^{-1} (\partial_\sigma \pm \partial_\phi))$.

We will use these Sobolev embeddings to estimate in
$L^{\infty}$ the following two functions, namely $Zu$ ($Z$ is either
$\partial_{\theta}$ or $\partial_{\phi}$ or any combinations of them)
and $(\partial_{\sigma}-\partial_{\phi})u $, as well as their derivatives.

To estimate $Zu$,  $w$ will be replaced with $Z u$. Then we can use 
the estimates \eqref{wneed2c2g+} and \eqref{wneecomp+} to bound the ``-" version of the right hand side in \eqref{interpolate-2h} by $1$.
This implies that in $C_{TS}$ we have the pointwise bound for $Zu$ 
\[
\Vert \partial^{\le h/2}   Z u \Vert_{L^{\infty}(C_{TS})} \lesssim 1.
\]

For $\left( \partial_{\sigma}-\partial_{\phi}\right) u$ we replace $w$ with $\left( \partial_{\sigma}-\partial_{\phi}\right) u$.
Then we can use 
the estimates \eqref{wneecomp+} and \eqref{m1+}  to bound the ``+" version of the right hand side in \eqref{interpolate-2h} by $S^\frac12 T^{-\frac12}$.

This implies that in $C_{TS}$ we have the pointwise bound for $(\partial_{\sigma}-\partial_{\phi}) u$ 
\[
\Vert \partial^{\leq \hh/2} (\partial_{\sigma}-\partial_{\phi}) u \Vert_{L^{\infty}(C_{TS})} \lesssim S^{\frac12} T^{-\frac12}.
\]

At this point we can rephrase the last two bounds as  bounds for $\nabla u$ and $\Tau u$:
\[
\begin{aligned}
&\Vert \partial^{\leq \hh/2} \Tau u \Vert_{L^{\infty}(C_{TS})}\lesssim T^{-1}, \\
& \Vert \partial^{\leq \hh/2}  \nabla u \Vert_{L^{\infty}(C_{TS})}\lesssim S^{-\frac12} T^{-\frac12}.
\end{aligned}
\]
This suffices for the bounds  \eqref{boot11}, \eqref{boot12bis} and \eqref{boot12} with $\delta = 0$ in 
our theorem provided that $\hh/ 2 > 2$ i.e. $\hh \geq 5$.

\bigskip
\textbf{Extra gain away from the cone.}
The remaining step in the proof of the theorem is to obtain the $\delta$ improvement in 
the bound \eqref{boot12}, which is needed  only for derivatives of $u$ of second and third 
order. As a byproduct, we will also obtain a similar improvement in \eqref{boot11}.
Precisely, we will prove that in $C_{TS}$ we have
\begin{equation} \label{delta-d}
| \partial^j \nabla u | \lesssim T^{-\frac12} S^{-\frac12-\delta}, \qquad j = 1,2
\end{equation}
respectively 
\begin{equation}\label{delta-tau}
| \partial^j \Tau u | \lesssim T^{-1} S^{-\delta} \qquad j = 1,2.
\end{equation}
For this we need an improvement in \eqref{interpolate-2h} when on the left we put
$\partial^j w$ with $0 < j < \hh/2$ namely, with some $\delta > 0$,
\begin{equation}
\label{interpolate-2h+}
\Vert \partial^j w\Vert_{L^{\infty}(C_{TS})} \lesssim S^{-\delta} \left(\Vert \ZZ^{\leq 2\hh} w\Vert_{L^{2}_{h} (C_{TS})} 
+ \left\|  \ZZ^{\leq \hh} \left( \partial_{\sigma}\pm \partial_{\phi}\right) w \right\| _{L^{2}_{h} (C_{TS})}\right).
\end{equation}

To prove \eqref{interpolate-2h+} we separate into two cases:

\bigskip

{\bf Case I.}  We consider first the slightly simpler case of the $-$ sign in \eqref{interpolate-2h}, which
corresponds to \eqref{delta-d}.  To obtain a better that $T^{-\frac12}
S^{-\frac12}$ bound for $\partial^j\nabla  u$ in \eqref{delta-d} or equivalently a better than $1$ bound for $\partial^j w$ in \eqref{interpolate-2h}  we consider the
balance of frequencies there.  Taking a Littlewood-Paley
decomposition, denote by $\lambda$ the $Z$ frequency and by $\mu$ the
$\partial_\sigma - \partial_\phi$ frequency. We can take both $\lambda,
\mu \geq 1$ since we work in a unit size region; all frequencies below
$1$ can be combined in the frequency $1$ case.  As before, for the
gradient we can think of the vector fields
\[
\nabla = \left\{T^{-1} Z, S^{-1} (\partial_\sigma - \partial_\phi)\right\}, \qquad \Tau = T^{-1} Z.
\]
The $L^2$ bound for $w_{\lambda\mu}$ given by the right hand side of \eqref{interpolate-2h} is 
\[
\|w_{\lambda \mu}\|_{L^2} \lesssim  \left( \lambda^2 + (T^{-1} \lambda +
S^{-1} \mu)^{2\hh} + \mu (\lambda +(T^{-1} \lambda + S^{-1} \mu)^{\hh})\right)^{-1}.
\]
To estimate $\partial^j w_{\lambda\mu}$ in $L^\infty$ we use Bernstein's inequality,
\[
\| \partial^j  w_{\lambda \mu}\|_{L^\infty} \lesssim \frac{   \lambda \mu^\frac12 (T^{-1} \lambda +
S^{-1} \mu)^{j}}{   \lambda^2 + (T^{-1} \lambda +
S^{-1} \mu)^{2\hh} + \mu ( \lambda + (T^{-1} \lambda + S^{-1} \mu)^{\hh})}.
\]
Now we maximize over $\lambda$ and $\mu$ on the right. One also needs to  sum 
with respect to $\lambda$ and $\mu$ but this is straightforward as there is 
dyadic exponential decay away from the maximum.
We first consider homogeneous variations in $\lambda,\mu$ where we keep the
 ratio fixed but vary the size. The maximum will be attained exactly when\footnote{Strictly speaking 
one should also separately consider the cases  when $\mu = 1$ or $\lambda = 1$. These are simpler 
and are omitted.}
\begin{equation}\label{max point}
\lambda = (T^{-1} \lambda + S^{-1} \mu)^{\hh}.
\end{equation}
Below that we have a positive power of the size parameter, and above
that a negative one.  Here it is important that $0 < j < 2\hh
-\frac32$. Otherwise, if $j = 0$ the power in the denominator always dominates
and the maximum is exactly at $\lambda = \mu = 1$. If $j$ is too large
then the above expression is unbounded.  The above bound from above
is not too important, as it gets tighter later on.

Assuming \eqref{max point}, the expression above simplifies to
\[
\frac{  \lambda^{\frac{j}{\hh}} \mu^\frac12 }{   \lambda + \mu}.
\]

We distinguish two cases:

\bigskip

(i) Large $\lambda$, 
\[
\lambda = (T^{-1} \lambda)^{\hh}, \qquad T^{-1} \lambda \geq S^{-1} \mu.
\]
Here we have $\lambda > \mu$ so the denominator becomes $\lambda$. Then the $\mu$ in the numerator
must be maximal. This leads to
\[
\lambda= T^{1+\frac{1}{\hh-1}}, \qquad \mu = \frac{S \lambda}{T},
\]
and the above expression becomes
\[
 [ S^\frac12 T^{-\frac12} \lambda^{\frac{j}\hh - \frac12}].
\]
which gains a power of $T$ provided that $2j < h$.

\bigskip

(ii) Large $\mu$, 
\[
\lambda = (S^{-1} \mu)^{\hh}, \qquad T^{-1} \lambda \leq S^{-1} \mu.
\]
We substitute this expression for $\lambda$ to get
\[
 \frac{S^{-j} \mu^{j+\frac12}} {(S^{-1} \mu)^{\hh}+ \mu}.
\]
This balances when the two terms in the denominator are equal. Then
\[
\mu  = S^{1+\frac{1}{\hh-1}},
\]
and we get 
\[
 S^{- j+ (j-\frac12)(1+\frac{1}{\hh-1})} =  S^{ (\frac{j}{h} -\frac12)(1+\frac{1}{\hh-1})},
\]
which gains a power of $S$ provided again that $2j < h$.

\bigskip

{\bf Case II.}  Here we consider  the case of the $+$ sign in \eqref{interpolate-2h}, which
corresponds to \eqref{delta-d}.  Here we have three relevant dyadic frequencies,
denoted by $\lambda$ for $Z$, $\mu$ for $\partial_\sigma -\partial_\phi$ and $\nu$ for 
$\partial_\sigma +\partial_\phi$. Bernstein's inequality now yields the bound
\[
\| \partial^j  w_{\lambda \mu}\|_{L^\infty} \lesssim  \frac{   \lambda 
\min\{\mu,\nu\}^\frac12 (T^{-1} \lambda +
S^{-1} \mu)^{j}}{   \lambda^2 + (T^{-1} \lambda +
S^{-1} \mu)^{2\hh} + \nu ( \lambda + (T^{-1} \lambda + S^{-1} \mu)^{\hh})}.
\]
Here we need to maximize the right hand side above  with respect to the three 
parameters $\lambda, \mu,\nu \geq 1$. 

As before, after excluding the cases $\lambda = 1$ and $\mu = 1$, 
one sees that at the maximum point we must have the relation \eqref{max point}
in which case  the expression above simplifies to
\[
\frac{  \lambda^{\frac{j}{\hh}} \min\{\mu,\nu\}^\frac12 }{   \lambda + \nu}.
\]
The $\nu$ maximum is at $\nu = \lambda$, so we are left with
\[
\frac{  \lambda^{\frac{j}{\hh}} \min\{\mu,\lambda\}^\frac12 }{   \lambda }.
\]
We consider the same two cases (i) and (ii) as in Case I. Part (i) is identical,
whereas in part (ii) we get 
\[
\frac{S^{-j} \mu^{j}   \min\{\mu, (S^{-1} \mu)^\hh\}^\frac12} {(S^{-1} \mu)^{\hh}}.
\]
The $\mu$ maximum is attained when the two terms in the min are equal, which again gives 
the same outcome as in Case I (ii).

\appendix
\section{An interpolation lemma}
Here we prove the following interpolation Lemma:
\begin{lemma} \label{l:interpolation} Assume that $n\geq 0$ and 
\[
\frac{2}{p}=\frac{1}{2}+\frac{1}{q}, \qquad 2\leq q\leq \infty.
\]
Then we have
\begin{equation}
\label{interpolare-re}
\Vert  \partial^{n+1} Z \phi \Vert_{L^{p}(C_{TS})} \lesssim \|  Z^{\leq 2} \phi  \|_{L^2(C_{TS})}^\frac{1}{2} ( \| \partial^{\leq 2n} \partial^2 \phi \|_{L^{q}(C_{TS})} + S^{-1} \| \partial \phi\|_{L^q(C_{TS})})^{\frac{1}{2}}.
\end{equation}
 The same holds in $C_T^{int}$.
\end{lemma}
\begin{proof}  The case of $C_T^{int}$ is similar and is omitted. We prove the result in several modular steps:

\medskip

\textbf{ Step 1: Reduction to the case of a cube. }
Here we use hyperbolic polar coordinates adapted to $C_{TS}$ 
to view $C_{TS}$  as a unit cube $Q$, which in turn  we can view as a product $Q = Q_1  \times Q_2$, with coordinates denoted by 
$(s,y)$.

The differentiation operators in the unit cube, translated to the $C_{TS}$ setting, are 
\[
(\partial_s,\partial_y) \approx (S \partial_r, Z).
\]

Then we can represent the differentiation operators in the 
Minkowski space as 
\[
(\partial_{t},\partial_x) \approx (S^{-1} \partial_s, T^{-1} \partial_y),
\]
where we have the slight difficulty that the connection between the two bases has variable coefficients, which are smooth on the $T$ scale. Hence when we represent 
$\partial^j_{x,t}$ in the basis on the right, we also get 
lower order terms,
\[
| \partial^j \phi| \lesssim \sum_{l=1}^j  T^{l-j} 
| (S^{-1} \partial_s, T^{-1} \partial_y)^l \phi| .
\]

Hence we can estimate the left hand side in \eqref{interpolare-re} by 
\[
\Vert  \partial^{n+1} Z \phi \Vert_{L^{p}(C_{TS})}
\lesssim \sum_{j=0}^{n+1} 
T^{j-n-1} \Vert  (S^{-1} \partial_s, T^{-1} \partial_y)^{j} \partial_y \phi \Vert_{L^{p}(C_{TS})}.
\]
On the other hand for the terms on the right we have
\[
\|  Z^{\leq 2} \phi  \|_{L^2(C_{TS})}
\approx \| \partial_y^{\leq 2} \phi \Vert_{L^{2}(C_{TS})},
\]
respectively
\[
\begin{aligned}
&\| \partial^{\leq 2n} \partial^2 \phi \|_{L^{q}(C_{TS})} + S^{-1} \| \partial \phi\|_{L^q (C_{TS})}\\
&\quad \approx \| (S^{-1} \partial_s, T^{-1} \partial_y)^{\leq 2n} (S^{-1} \partial_s, T^{-1} \partial_y)^2 \phi \|_{L^{q}(C_{TS})} + S^{-1} \| (S^{-1} \partial_s, T^{-1} \partial_y) \phi\|_{L^q(C_{TS})}.
\end{aligned}
\]
Changing also the measure of integration, the bound 
\eqref{interpolare-re} is now reduced to
\[
\begin{aligned}
\sum_{j=0}^{n+1} 
T^{2(j-n-1)} \Vert  (S^{-1} \partial_s, T^{-1} \partial_y)^{j} \partial_y \phi \Vert_{L^{p}(Q)}^2
\lesssim &  \| \partial_y^{\leq 2} \phi \Vert_{L^{2}(Q)}
(\| (S^{-1} \partial_s, T^{-1} \partial_y)^{\leq 2n} (S^{-1} \partial_s, T^{-1} \partial_y)^2 \phi \|_{L^{q}(Q)} \\ & + S^{-1} \| (S^{-1} \partial_s, T^{-1} \partial_y) \phi\|_{L^q(Q)}).
\end{aligned}
\]
Here we are in a fixed unit size  cube, so $S$ and $T$ simply play the role of large parameters with the only constraint $1 \leq S \lesssim T$.
We need to estimate all components of the norm on the left.
By interpolating solely in $y$, it suffices
to consider on the left only the cases when either $j=n+1$ or when there are no $y$ derivatives in the 
$j$-th power. In the latter situation the case $j = 0$ is trivial, while if $j \geq 1$ we may redefine $n$ to a lower value $n := j-1$ and thus assume that $j = n+1$. Then we discard all $T^{-1} \partial_y$
on the right, arriving at
\begin{equation}\label{main-interp}
\| \partial_s^{n+1} \partial_y \phi \|_{L^p(Q)}^2
\lesssim  \| \partial_y^{\leq 2} \phi \Vert_{L^{2}(Q)} (\| \partial_s^{\leq 2n} \partial_s^2 \phi\|_{L^q(Q)} + \| \partial_s \phi\|_{L^q(Q)}).
\end{equation}
It remains to consider the case $j = n+1$
with at least one $T^{-1} \partial_y$ factor on the left. Then we can rewrite this as a bound for 
$\psi = \partial_y^2 \phi$, namely 
\[
 \|(S^{-1} \partial_s, T^{-1} \partial_y)^{n} \psi\|_{L^p}^2
\lesssim \|\psi\|_{L^2} \| (S^{-1} \partial_s, T^{-1} \partial_y)^{\leq 2n}  \psi\|_{L^q(Q)}.
\]
But after rescaling back to the original size this 
is just the classical Gagliardo-Nirenberg inequality
in a cube of size $S \times T \times T$.
It remains to prove \eqref{main-interp} in a unit sized cube.

\bigskip

\textbf{ Step 2: Reduction to a bound in $\R^3$.} Here we harmlessly subtract the $s$ average of $\phi$
from $\phi$. By Poincare's inequality this allows us to reduce to the case when $\phi$ has zero average in $s$, where the 
$L^q (Q)$ norm of $\phi$ is also under control. Thus 
\eqref{main-interp} reduces to
\begin{equation}\label{main-interp+}
\| \partial_s^{n+1} \partial_y \phi \|_{L^p(Q)}^2
\lesssim  \| \partial_y^{\leq 2} \phi \Vert_{L^{2}(Q)} \| \partial_s^{\leq 2n+2}  \phi\|_{L^q(Q)} .
\end{equation}

Having inhomogeneous norms on the right allows us to 
to extend $\phi$ to a double cube and then truncate,
thus reducing to proving  \eqref{main-interp+} in all of $\R^3$, for a compactly supported function $\phi$.

\textbf{ Step 3: Interpolation in $\R^3$.} Here we use Stein's interpolation theorem 
for the holomorphic family of operators
\[
T_z \phi = e^{(z-\frac12)^2}|D_y|^{2(1-z)}  |D_s|^{(2n+2)z} \phi
\]
for $z$ in the strip 
\[
S = \{ z\in \mathbb{C} \, :\,  0 \leq \Re z \leq 1\}.
\]
For this family we have the interpolation inequality
\begin{equation}
\| \partial_s^{n+1} \partial_y \phi \|_{L^p}^2
\lesssim \| |D_s|^{n+1} |D_y| \phi \|_{L^p}^2
= \| T_{\frac12} \phi \|_{L^p}^2
\lesssim \sup_{\Re z = 0} \|T_z \phi\|_{L^2}
\sup_{\Re z = 1} \|T_z \phi\|_{L^q},
\end{equation}
where for the first step we use that the Hilbert and Riesz transforms are bounded from $L^p\rightarrow L^p$, with $1<p< \infty$.
Hence it suffices to show that 
\begin{equation}
    \sup_{\Re z = 0} \|T_z \phi\|_{L^2}
    \lesssim \| \partial_y^2 \phi \|_{L^2},
\end{equation}
respectively 
\begin{equation}
 \sup_{\Re z = 1} \|T_z \phi\|_{L^q}
 \lesssim \| \partial_s^{\leq 2n+2}  \phi\|_{L^q} .
\end{equation}
The first bound is straightforward by Plancherel's theorem. For the second bound
we need to verify that 
\[
|D_s|^{i \theta} |D_y|^{i\sigma}: L^q \to L^q,
\]
with sub-Gaussian norm growth in $\theta$ and $\sigma$ at $\pm \infty$.

If $1 < q < \infty$ then we are in a special case of the Hormander-Mikhlin theorem applied separately with respect to the two variables.

It remains to consider 
the special case $q = \infty$, where we show instead 
that 
\[
|D_s|^{i \theta} |D_y|^{i\sigma}: L^\infty \to BMO.
\]
This is true separately for each factor, 
but not immediately true for the product.
To address this difficulty we use a $0$-homogeneous 
cutoff function $\chi$ with smooth symbol $\chi(\xi,\eta)$, where $\xi$ and $\eta$ are the Fourier variables for $s$, respectively $y$. This is chosen so that $\chi = 1$ near $\xi = 0$ respectively 
$\chi = 0$ near $\eta = 0$. We separate the above product into two parts,
\[
|D_s|^{i \theta} |D_y|^{i\sigma}
=  ( \chi(D) |D_y|^{i\sigma})|D_s|^{i \theta} +
((1-\chi(D))|D_s|^{i \theta}) |D_y|^{i\sigma}.
\]
These are similar so we estimate the first one. 
Here 
\[
|D_s|^{i \theta}: L^\infty_{s,y} \to L^\infty_y BMO_s \subset BMO_{sy},
\]
while $ \chi(D) |D_y|^{i\sigma}$ is a Hormander-Mikhlin multiplier so it maps BMO into BMO.
\end{proof}

\medskip


\end{document}